\DeclarePairedDelimiter{\setof}{\{}{\}}
\renewcommand{\iff}{\text{if and only if}}
\definecolor{lightblue}{rgb}{0.8,0.8,1}  
\newtheorem{theorem}{Theorem}
\newtheorem{maintheorem}{Theorem}
\newtheorem{corollary}[theorem]{Corollary}
\newtheorem{definition}[theorem]{Definition}
\newtheorem{example}[theorem]{Example}
\newtheorem{lemma}[theorem]{Lemma}
\newtheorem{remark}[theorem]{Remark}
\newenvironment{proof}[1][Proof]{\textbf{#1.} }{\ \rule{0.5em}{0.5em}}
\newcommand{\correction}[2]{#2}
\newcommand{\cor}[1]{#1}
\newcommand{\comment}[1]{}
\newcommand{\mymarginpar}[1]{}
\newcommand*\circled[2][1.6]{\tikz[baseline=(char.base)]{
    \node[shape=circle, draw, inner sep=1pt,
        minimum height={\f@size*#1},] (char) {\vphantom{WAH1g}#2};}}
\begin{document}

\begin{frontmatter}
\title{Computer assisted proofs for transverse collision \\
and near collision orbits in the restricted three body problem}
%

\author{Maciej J. Capi\'nski\footnote{M. C. was partially supported by the NCN grants 2019/35/B/ST1/00655 and 2021/41/B/ST1/00407.}}
\ead{maciej.capinski@agh.edu.pl}
\address{AGH University of Science and Technology, al. Mickiewicza 30, 30-059 Krak\'ow, Poland}

\author{Shane Kepley}
\ead{s.kepley@vu.nl}
\address{Vrije Universiteit Amsterdam, 
De Boelelaan 1105, 1081 HV Amsterdam, Netherlands }

\author{J.D. Mireles James\footnote{J.D.M.J. was partially supported by 
NSF Grant DMS 1813501}}
\ead{jmirelesjameds@fau.edu}
\address{Florida Atlantic University, 777 Glades Road, Boca Raton, Florida, 33431}

\begin{abstract}
This paper considers two point boundary value problems for conservative systems 
defined in multiple coordinate systems, and develops a flexible a posteriori framework
for computer assisted existence proofs.  
Our framework is applied to the study collision and near collision orbits
in the circular restricted three body problem.  In this case the coordinate systems are the 
standard rotating coordinates, and the two Levi-Civita coordinate systems
regularizing collisions with each of the massive primaries.  The proposed framework is used 
to prove the existence of a number of orbits which have long been studied numerically 
in the celestial mechanics literature, but for which there are no existing analytical proofs 
at the mass and energy values considered here.  These include 
transverse ejection/collisions from one primary body to the other, 
Str\"{o}mgren's assymptotic periodic orbits
(transverse homoclinics for $L_{4,5}$), families of periodic orbits 
passing through collision, and 
orbits connecting $L_4$ to ejection or collision.  
\end{abstract}

\begin{keyword} Celestial mechanics, collisions, 
transverse homoclinic, 
computer assisted proofs.
\MSC[2010]
37C29, 37J46, 70F07. 
\end{keyword}
\end{frontmatter}



\section{Introduction} \label{sec:intro}


\correction{comment 1}{
The present work develops computer assisted arguments for proving theorems about
collision and near collision orbits in conservative systems.  Using these arguments, 
we answer several open questions about the dynamics of the
planar circular restricted three body problem
(CRTBP), a simplified model of three body motion
popular since the pioneering work of Poincar\'{e} 
\cite{MR1194622,MR1194623,MR1194624}.
Our approach combines classical Levi-Civita  
regularization with a multiple shooting scheme 
for two point boundary value problems (BVPs)
describing orbits which begin and end on 
parameterized curves/symmetry sets
in an energy level.  After numerically computing an 
approximate solution to the BVP, we use a
Newton-Krawczyk theorem to establish the proof of the existence
of a true solution nearby.  The a posteriori
argument makes extensive use of validated Taylor 
integrators for vector fields and variational equations.  
See also Remark \ref{rem:puttingItAllTogether} for 
several additional remarks about the relationship between
the present work and the existing literature.
}

%

The PCRTBP, defined formally in Section \ref{sec:PCRTBP}, 
describes the motion of 
an infinitesimal particle like a satellite, asteroid, or comet 
moving in the field of two massive bodies which orbit their center 
of mass on Keplerian circles.  The massive bodies are called the primaries,   
and and one assumes that their orbits are not disturbed
by the addition of the massless particle.  Changing to a co-rotating frame
of reference results in autonomous equations of motion, and 
choosing normalized units of distance, mass, and time 
reduces the number of parameters in the problem to one: the mass ratio of 
the primaries.  The system has a single first integral referred to as
the Jacobi constant, usually written as $C$.

 It is important to remember that for systems with a conserved quantity, 
 periodic orbits occur in one parameter families 
 -- or tubes -- parameterized by ``energy'' or conserved quantity.  
 We note also that the CRTBP has an equilibrium solution, also called a 
 Lagrange point or libration point, called $L_4$ in the upper half plane 
 forming an equilateral triangle with the two 
 primaries. (Similarly, $L_5$ forms an equilateral triangle
 in the lower half plane).
 We are interested in the following questions for the CRTBP.

 \begin{itemize}
\item  \textbf{Q1:} \textit{Do there exist orbits of the 
infinitesimal body, which collide with 
one primary in forward time, and 
the other primary in backward time?}  We refer to such orbits as  
primary-to-primary ejection-collisions. 
\item \textbf{Q2:} \textit{Do there exist orbits of 
infinitesimal body
which are assymptotic to the $L_4$ in backward time, 
but which collide with a primary in forward time?}
(Or the reverse - from ejection to $L_4$).  We refer to these as 
$L_4$-to-collision orbits (or ejection-to-$L_4$ orbits). 
\item  \textbf{Q3:} \textit{Do there exist orbits 
of the infinitesimal body which are 
asymptotic in both forward and backward time to $L_4$? }
Such orbits are said to be homoclinic to $L_{4}$.
\item \textbf{Q4:} \textit{Do there exist 
tubes of large amplitude periodic orbits
for the infinitesimal body, which accumulate to an ejection-collision orbit with 
one of the primaries?} Such tubes are said to terminate 
at an ejection-collision orbit.
\item  \textbf{Q5:} \textit{Do there exist tubes of periodic orbits
for the infinitesimal body
which accumulate to a pair of ejection-collision orbits going
from one primary to the other and back?}.  Such 
tubes are said to terminate at a consecutive 
ejection-collision.  
\end{itemize}

The questions 1 and 4 are known to have affirmative answers in various 
perturbative situations, but they are open for many mass ratios and/or 
values of the Jacobi constant.
There is also numerical evidence suggesting the 
existence of $L_4$ homoclinic orbits, and
consecutive ejection-collisions. 
However, questions 2,3, and 5 are not perturbative, and 
until now they remain open.  
 We review the literature in more detail in Section \ref{sec:collisionLit}. 
 The following theorems, for non-perturbative 
mass and energy parameters of the planar CRTBP, 
constitute the main results of the present work. 


\correction{comment 9}{}
\correction{comment 3}{
\begin{theorem}\label{thm:main-thm-1}
Consider the PCRTBP with mass ratio $1/4$ and 
Jacobi constant $C = 3.2$.  There exist at least two 
transverse ejection-collision orbits which transit 
between the primary bodies.  
One of these is ejected from the large primary and collides with 
the smaller, and the other is ejected from the 
smaller primary and collides with the larger.
Both orbits make this transition in finite time as 
measured in the original/synodic coordinates.
(See page \pageref{thm:ejectionCollision} for the precise statement).
\end{theorem}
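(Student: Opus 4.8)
The plan is to recast each primary-to-primary ejection-collision orbit as a regular solution of a two-point boundary value problem posed across three coordinate charts, and then to prove existence rigorously with an a posteriori Newton-Krawczyk argument. The obstruction to a direct approach is that the synodic vector field is singular at each primary, so an orbit which ejects from one body and collides with the other cannot be integrated through its endpoints in these coordinates. To remove the singularities I would pass to the two Levi-Civita charts regularizing collisions with the large and small primaries respectively. In each chart the collision set becomes the origin, the regularized vector field extends analytically across it, and an ejection-collision in synodic coordinates corresponds to a regular orbit passing through the origin of the appropriate chart. After restricting to the energy level $C = 3.2$, the ejection condition at one primary and the collision condition at the other both reduce to the requirement that the orbit meet the origin of the corresponding Levi-Civita chart, with the remaining freedom (the ejection/arrival direction) constrained by the energy relation. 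Since the regularizing time rescaling degenerates at the origin but remains integrable, the fact that the transition occurs in finite \emph{synodic} time is automatic, and the transit time is recovered as a rigorous integral of the rescaling factor along the enclosed orbit.

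With this setup I would build the BVP by a multiple shooting scheme. The trajectory is decomposed into finitely many segments: an initial segment integrated in the Levi-Civita chart of the ejecting primary starting at its origin, a sequence of interior segments integrated in synodic coordinates, and a terminal segment integrated in the Levi-Civita chart of the colliding primary and ending at its origin. Consecutive segments are required to match, and at the two chart interfaces the matching incorporates the explicit (quadratic) Levi-Civita coordinate change together with its time rescaling. Assembling the boundary conditions at the two origins and all interior matching conditions yields a finite dimensional map $F$ whose regular zeros are in bijection with the orbits sought, once the time-translation freedom has been fixed by a phase condition. Each evaluation of $F$ and of its derivative $DF$ is realized through validated Taylor integrators for the flow and the associated variational equations, producing rigorous interval enclosures.

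I would first compute, numerically and non-rigorously, an approximate zero $\bar{x}$ of $F$ for each of the two orbits by Newton iteration. The existence of a true zero $x^{*}$ near $\bar{x}$ then follows from a Newton-Krawczyk theorem: one encloses the defect $F(\bar{x})$, encloses $DF$ over a small ball $B(\bar{x}, r)$, and verifies that the Krawczyk operator maps $B(\bar{x}, r)$ strictly into itself, which simultaneously gives existence and local uniqueness. Transversality of the ejection-collision — the orbit meeting each collision set transversally within the energy level — is read off from the same computation, since the contraction in the Krawczyk test certifies that $DF(x^{*})$ is nonsingular, equivalently that the ejection manifold of one primary and the collision manifold of the other intersect transversally along the orbit. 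The two orbits are moreover related by the time-reversal symmetry of the CRTBP (reflection across the line of primaries composed with time reversal), which interchanges ejection and collision and hence carries the large-to-small orbit to a small-to-large one; one may therefore obtain the second orbit from the first, or equally well validate the two problems independently.

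The main obstacle I anticipate is quantitative rather than structural: keeping the interval enclosures tight enough throughout the near-collision passages so that the Krawczyk contraction actually closes. Although Levi-Civita regularization makes the flow analytic across each origin, the variational equations still develop large derivative growth near collision, and the nonlinearity there tends to inflate the enclosures produced by the Taylor integrator. Controlling this — by placing shooting nodes so that short, well-conditioned segments straddle the origins, and by propagating derivatives with sufficiently high Taylor order — is what decides whether the resulting $DF$ enclosure is provably invertible. The remaining care is in implementing the synodic-to-Levi-Civita transitions rigorously, so that no singular behavior leaks into the interval arithmetic at the chart interfaces.
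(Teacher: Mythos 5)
Your overall architecture --- Levi-Civita charts at both primaries, boundary data on the regularized collision sets, multiple shooting across the three coordinate systems with the chart changes built into the matching conditions, validated Taylor integration of the flows and variational equations, a Newton--Krawczyk a posteriori argument, the synodic transit time recovered as a rigorous integral of the time-rescaling factor, and the second orbit obtained from the $S$-symmetry --- is exactly the paper's (Sections \ref{sec:problem}, \ref{sec:ejectionToCollision} and \ref{sec:EC}). However, there is a structural gap in your formulation, and it is not the quantitative issue (enclosure tightness near collision) that you flag as the main obstacle.

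Every ingredient of your shooting map $F$ preserves energy: the regularized flows $\psi_i^c$ preserve $E_i^c$, the synodic flow preserves $E$, the transformations $T_i^{\pm1}$ exchange the level sets $\{E_i^c=0\}$ and $\{E=c\}$, and both boundary sets lie inside these level sets. Consequently the system is overdetermined: in single-shooting form you have two circle angles plus one synodic flight time (3 unknowns) against a matching condition in $\mathbb{R}^4$ (4 equations), and at any solution \emph{all} columns of the square Jacobian you would hand to Krawczyk lie in the $3$-dimensional tangent space of the energy level at the matching point, so $DF(x^*)$ is singular at every solution --- structurally, not numerically. The bound $\|\mathrm{Id}-A\,DF\|<1$ can therefore never be verified, no matter how the shooting nodes are placed or how high the Taylor order is. Your proposed ``phase condition fixing the time-translation freedom'' does not repair this; a fixed-endpoint BVP of this kind has no time-translation invariance to fix (that issue belongs to periodic orbits), and adding equations only worsens the imbalance. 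What the paper does instead is append the \emph{unfolding parameter} of Definition~\ref{def:unfolding}: the artificial dissipative family $f_{\alpha}=f+\alpha\,(0,p,0,q)$ of Equation \eqref{eq:unfoldedPCRTBP} adds one unknown whose Jacobian column points transversally off the energy level, making the system square and honestly nondegenerate, while Theorem~\ref{th:single-shooting} guarantees that any solution has $\alpha=0$, so no spurious orbits are introduced; Lemma~\ref{lem:multiple-shooting-2} transfers this to the multiple-shooting system. It is also precisely this structure that makes your claimed equivalence ``$DF(x^*)$ nonsingular $\Leftrightarrow$ the ejection and collision manifolds meet transversally in $\{E=c\}$'' true --- that is the content of Theorem~\ref{th:single-shooting} and Lemma~\ref{lem:collision-connections}, and it is simply false (vacuously unverifiable) for the unbalanced formulation. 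Workable alternatives exist --- drop one scalar equation and check it a posteriori, or impose only the two position components at the terminal collision and let energy conservation supply the momentum condition --- but some such device must be chosen and justified (see Remark~\ref{rem:unfolding}); your proposal as written contains none.
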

\begin{theorem}\label{thm:main-thm-2} Consider the PCRTBP with equal masses 
and Jacobi constant $C_{L_4} = 3$.
There exists at least one ejection-to-$L_4$ orbit, and at least one 
$L_4$-to-collision.  These orbits
take infinite (forward or backward) time to reach $L_4$.
 (See page \pageref{thm:CAP-L4-to-collision} for the precise statement.)
Analogous orbits exist for $L_5$ by symmetry. 
\end{theorem}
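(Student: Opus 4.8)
The plan is to recast each connection as a two-point boundary value problem inside the $3$-dimensional energy manifold $M_C = \{C = 3\}$, with one boundary lying on a validated chart of the local stable (or unstable) manifold of $L_4$ and the other lying on the collision/ejection set expressed in Levi-Civita coordinates, and then to close the problem with a Newton-Krawczyk argument exactly as in the a posteriori scheme described in the Introduction. The feature distinguishing this theorem from the finite-time ejection-collision result of Theorem~\ref{thm:main-thm-1} is that one endpoint is an equilibrium, so the orbit reaches $L_4$ only in infinite time. This is precisely the situation the parameterization method is designed to handle: once a finite-time trajectory is shown to enter the validated local chart, the remaining infinite-time tail converges automatically under the internal, contracting dynamics of the chart, so no infinite integration time is ever required.

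First I would analyze the linearization at $L_4$. For equal masses $\mu = 1/2$ one has $\mu(1-\mu) = 1/4 > 1/27$, so $L_4$ lies past the Routh stability threshold and is a saddle-focus: the four eigenvalues form a complex quadruple $\pm a \pm b\,i$, giving two-dimensional stable and unstable manifolds that spiral into $L_4$. I would compute a high-order polynomial parameterization $P$ of $W^{s}_{\mathrm{loc}}(L_4)$ (respectively $W^{u}_{\mathrm{loc}}(L_4)$) satisfying the invariance equation, check the absence of low-order resonances among the eigenvalues, and then validate $P$ a posteriori, producing a rigorous $C^0$ and $C^1$ enclosure of the true local manifold together with an explicit bound on the domain of the chart. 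Because the manifold spirals, obtaining a chart whose image is large enough to be reached by a flowed ejection trajectory is where care is needed.

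Next I would describe the collision boundary. In the Levi-Civita variables regularizing the relevant primary, collision is the regular set $\{u = 0\}$, and the energy relation fixes the regularized speed $\lvert u' \rvert$ there, so the ejection states form a one-parameter family (a circle of ejection directions) which I enclose rigorously as a curve $\gamma(s) \subset M_C$. I would then build a multiple-shooting map: using validated Taylor integration of the regularized Levi-Civita field, of the synodic field, and of the rigorous changes of coordinates between them, I flow $\gamma(s)$ forward to a Poincar\'e section $\Sigma \subset M_C$ and flow a point $P(\theta)$ of the local $L_4$ manifold backward to the same section. After imposing phase and section conditions to remove the time-shift freedom and the internal rotational symmetry of the chart, and noting that one matching equation is redundant since both legs already lie in $M_C$, this yields a balanced finite system $F = 0$ in the ejection parameter, the chart parameter, and the integration times. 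I would apply the Krawczyk operator to a numerically computed approximate connection: its contraction proves existence and local uniqueness of a true zero, while the accompanying invertibility of $DF$ shows that the intersection of $W^{s}(L_4)$ with the forward image of the ejection set is transverse in $M_C$, so the connecting orbit is isolated and robust.

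Finally, the companion orbits follow from symmetry rather than from a second computation. The time-reversing symmetry of the PCRTBP exchanges the stable and unstable manifolds and preserves the collision set, so the single validated connection yields both an ejection-to-$L_4$ and an $L_4$-to-collision orbit, and the reflection exchanging the upper and lower half planes maps $L_4$ to $L_5$, giving the analogous $L_5$ orbits. I expect the main obstacle to be quantitative rather than conceptual: driving the wrapping error of the rigorous integrator low enough through the near-collision regime and across the synodic/Levi-Civita coordinate changes, while simultaneously validating the spiraling two-dimensional $L_4$ manifold on a domain large enough to meet the flowed ejection curve, so that the Krawczyk map is genuinely a contraction at the computed solution.
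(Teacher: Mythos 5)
Your proposal follows the paper's strategy in its essentials: a boundary value problem with one endpoint on a validated parameterization-method chart of $W^{u,s}_{\mathrm{loc}}(L_4)$ and the other on the Levi-Civita collision circle, solved by validated Taylor integration through both coordinate systems and a Newton--Krawczyk contraction, with transversality in the energy level read off from invertibility of $DF$; this is exactly the content of the operator in Equation \eqref{eq:EC_to_L4_operator}, Lemma \ref{lem:Li-collisions}, and Theorem \ref{thm:CAP-L4-to-collision}. Where you genuinely diverge is in how the over-determined matching system is balanced. You match on a Poincar\'e section $\Sigma \subset M_C$ and discard one equation as redundant because both legs lie in the energy level; the paper instead keeps all matching equations, leaves the flight time $\tau$ as an unknown, and appends the dissipative unfolding parameter $\alpha$ (Definition \ref{def:unfolding}, Example \ref{ex:dissipative-unfolding}), so that Theorem \ref{th:single-shooting} simultaneously forces $\alpha = 0$ at any zero and converts nondegeneracy of $DF$ into transversality relative to $M_C$. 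Your route is viable and is precisely one of the alternatives the authors discuss in Remark \ref{rem:unfolding}, but it carries extra verification burden in a computer-assisted setting: you must rigorously check that near the computed point the dropped coordinate is recovered from the retained ones together with the exact energy constraint (transversality of the energy gradient on $\Sigma$), and you must validate the section crossings themselves. The unfolding parameter buys freedom from both checks at the cost of one extra artificial variable, which is why the paper prefers it.

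One claim in your last paragraph is imprecise and would need repair. No symmetry of the PCRTBP simultaneously exchanges $W^u(L_4)$ with $W^s(L_4)$ \emph{and} preserves the collision set of a given primary: the reversal $S(x,p,y,q)=(x,-p,-y,q)$ preserves each primary's collision set but maps $W^u(L_4)$ to $W^s(L_5)$, while the equal-mass reversal $R(x,p,y,q)=(-x,p,y,-q)$ fixes $L_4$ and swaps its stable and unstable manifolds but exchanges the collision sets of $m_1$ and $m_2$. Since Theorem \ref{thm:main-thm-2} does not specify the primary, your shortcut survives if you invoke $R$ (available only because $\mu = 1/2$): the image of a validated $L_4$-to-collision-with-$m_1$ orbit is an ejection-from-$m_2$-to-$L_4$ orbit, and $S$ then supplies the $L_5$ counterparts. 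This would save one of the two computations the paper actually performs (the paper validates the ejection-to-$L_4$ orbit independently with the operator $F^{s}_{1,4}$ and the data of Table \ref{tab:m1-to-L4_Tab3}), at the price of tying the argument to the equal-mass case, where the paper's two-computation approach generalizes to arbitrary mass ratios.
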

\begin{theorem}\label{thm:main-thm-3} Consider the PCRTBP with equal masses and 
Jacobi constant $C_{L_4} = 3$.
There exist at least three distinct transverse homoclinic orbits 
to $L_4$. (See page \pageref{thm:CAP-connections} for the precise statement). 
Analogous orbits exist for $L_5$ by symmetry considerations. 
These orbits take infinite time to accumulate to $L_4$.
As a corollary of transversality, (see also Remark \ref{rem:termination}) there exist
chaotic subsystems in some neighborhood of each homoclinic orbit.
\end{theorem}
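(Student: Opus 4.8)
The plan is to realize each homoclinic orbit as the solution of a two point boundary value problem whose two endpoints are pinned to the local stable and unstable manifolds of $L_4$, and then to validate that solution with the Newton--Krawczyk scheme described above. At equal masses we are well above Routh's critical mass ratio, so $L_4$ is hyperbolic: its linearization is a real Hamiltonian matrix whose spectrum is a complex quadruple $\pm a \pm bi$ with $a \neq 0$. Consequently $W^u(L_4)$ and $W^s(L_4)$ are each two dimensional, and both lie inside the three dimensional energy level $\{C = C_{L_4}\}$. A transverse intersection of these two surfaces within the energy level is one dimensional, i.e. exactly a connecting orbit, so transverse homoclinics are the generic objects to look for, and transversality should be read as $T W^u + T W^s = T\{C = C_{L_4}\}$ at a connection point (the two tangent planes sharing only the flow direction).

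First I would compute analytic parameterizations $P^u, P^s$ of the local unstable and stable manifolds by the parameterization method, solving the invariance equation order by order and conjugating the flow on each manifold to its linear normal form; because the eigenvalues occur as conjugate pairs, this step must be arranged to return real analytic charts despite the spiraling linear dynamics. Rigorous $C^0$ and $C^1$ enclosures of $P^u, P^s$ and their images replace the infinite time tails of the orbit near $L_4$, reducing the connection to a finite time object. I would then set up the shooting problem in the spirit of the multi-coordinate framework of this paper: choose boundary parameters on each chart, advance $P^u$ forward and $P^s$ backward with the validated Taylor integrator, and impose matching at intermediate sections, switching into Levi-Civita coordinates on any leg that passes close to a primary so that each integration leg stays well conditioned and short.

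Next I would feed a numerically computed approximate homoclinic into the Newton--Krawczyk theorem. The zero finding map $F$ encodes the boundary conditions on $W^u_{\mathrm{loc}}$ and $W^s_{\mathrm{loc}}$ together with the matching conditions along the shooting sections; a Krawczyk contraction estimate on $F$ and $DF$ over a small box around the numerical data yields a true nearby zero, hence a true homoclinic orbit. Crucially, the same computation delivers transversality: if the validated $DF$ is invertible, the enclosed $W^u$ and $W^s$ must meet with complementary tangent spaces inside the energy manifold, so the intersection is transverse. Repeating the argument for three numerically distinct approximate connections, and verifying that the resulting solution enclosures are pairwise disjoint, proves the existence of at least three distinct transverse homoclinics. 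The $L_5$ statement then follows from the reflection symmetry $y \mapsto -y$ of the PCRTBP, and the infinite accumulation time is automatic since $L_4$ is an equilibrium whose approach is captured entirely by the local manifold charts.

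I expect the main obstacle to be controlling the integration near $L_4$: because the equilibrium is a complex saddle the orbit spirals as it accumulates, so the local parameterizations must be computed to high order with tight rigorous bounds in order to truncate the infinite time tail at a point where the Taylor integrator can take over without the interval enclosures blowing up through the wrapping effect, and where the validated $C^1$ data is still sharp enough to certify invertibility of $DF$. Once existence and transversality are established, the chaos corollary follows from the classical theory of transverse homoclinic orbits to hyperbolic saddle-focus equilibria in the spirit of Devaney, which produces a shift-conjugate subsystem in a neighborhood of each connection; see Remark \ref{rem:termination}.
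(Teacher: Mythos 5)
Your outline follows the paper's own strategy almost exactly: parameterization-method charts for $W^{u,s}_{\mathrm{loc}}(L_4)$ with validated tail bounds, a multiple-shooting BVP that swaps into Levi-Civita coordinates on the leg passing near $m_2$, Newton--Krawczyk validation of three numerically distinct connections, the $y\mapsto -y$ symmetry for $L_5$, and Devaney's theorem for the chaotic subsystems. However, there is one genuine gap: you never address the fact that the shooting system you describe is \emph{not square}, so Theorem \ref{thm:NK} cannot be applied to it as stated. Both boundary circles $P^u_j(\theta)=w^u_j(\cos\theta,\sin\theta)$ and $P^s_k(\theta)$ are one-dimensional curves lying in the same three-dimensional energy level, so after telescoping the intermediate matching points away, your unknowns are two boundary angles and the flight time --- three scalars --- while the matching condition lives in $\mathbb{R}^4$: four equations, three unknowns. (If instead you meant to use the full two-dimensional disk parameters of the charts, you get five unknowns against four equations, and now $DF$ has a nontrivial kernel coming from sliding the base point along an orbit inside the manifold while shortening $\tau$; either way the Krawczyk contraction estimate and the invertibility of $DF$ that your transversality claim rests on are unattainable.)

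The paper resolves this with the unfolding parameter $\alpha$, which is the central technical device of Section \ref{sec:problem}: the flow $\phi$ is replaced by the flow $\phi_\alpha$ of the dissipatively perturbed field $f+\alpha(0,p,0,q)$ of Example \ref{ex:dissipative-unfolding}, which appears in the homoclinic operator \eqref{eq:homoclinicOperator}. This adds exactly one unknown, making the system square, while Definition \ref{def:unfolding} guarantees that any zero automatically has $\alpha=0$, so no spurious solutions are introduced; Theorem \ref{th:single-shooting} together with Lemma \ref{lem:multiple-shooting-2} then converts invertibility of the \emph{augmented} $DF$ (whose last column is $\partial R/\partial\alpha$) into transversality of $W^u(L_4)\cap W^s(L_4)$ relative to $\{E=3\}$ --- this is precisely the step your heuristic ``invertible $DF$ implies complementary tangent spaces'' needs, and it is not automatic without the augmentation. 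Alternatives exist (e.g.\ discarding one matching equation and verifying it a posteriori, as discussed in Remark \ref{rem:unfolding}), but some such balancing device must be built into $F$ before the a posteriori argument can even begin.
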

\begin{theorem}\label{thm:main-thm-4} Consider the PCRTBP with Earth-Moon mass ratio.
There exists a one parameter family of periodic orbits which accumulate to 
an ejection-collision orbit originating from and terminating with the Earth.  The ejection collision
orbit has Jacobi constant $C \approx 1.434$, and   
has ``large amplitude'', in the sense that it passes 
near collision with the Moon.  This ejection-collision occurs in finite time
in synodic/unregularized coordinates.
(See page \pageref{th:CAP-Lyap} for the precise statement.) 
\end{theorem}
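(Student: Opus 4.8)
The plan is to realize both the ejection--collision orbit and the accumulating family of periodic orbits as solutions of a single parameterized two point boundary value problem, and then to validate the entire branch, including its degenerate endpoint, with the a posteriori Newton--Krawczyk scheme described in the introduction. First I would exploit the time reversing symmetry of the PCRTBP together with Levi-Civita regularization at the Earth. In regularized coordinates the Earth collision is an ordinary point of the vector field: an ejection or collision corresponds to the trajectory passing through the origin $w=0$ of the Levi-Civita chart with finite, nonzero regularized velocity. A symmetric ejection--collision orbit is then characterized by a trajectory which leaves $w=0$ on the prescribed energy level and meets the symmetry set $\{y=0,\ \dot x=0\}$ perpendicularly at its far, near-Moon crossing; a symmetric periodic orbit which passes near---but not through---the Earth is characterized by a trajectory meeting the symmetry set perpendicularly both at a near-Earth crossing $w=(a,0)$ and at the far crossing. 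Thus I would introduce the signed Levi-Civita coordinate $a$ of the near-Earth perpendicular crossing as the continuation parameter: the value $a=0$ encodes the ejection--collision orbit, and small $a\neq 0$ encodes the surrounding periodic orbits. The unknowns are the free crossing coordinates, the regularized half-period, and the Jacobi constant $C$, and the equations are the perpendicular-landing conditions together with the constraint fixing the energy level.

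Next I would compute an accurate numerical branch $a \mapsto$ (orbit, half-period, $C(a)$), with $C(0)\approx 1.434$, and set up the validated solver. Because the orbit has large amplitude and passes close to the Moon, a single Taylor integration in synodic coordinates will not control the enclosure through the near-Moon passage, so I would use the second Levi-Civita chart centered at the Moon together with a multiple shooting decomposition: integrate out of the Earth chart, hand the enclosure to synodic coordinates, pass through the Moon chart during the close approach, and return, matching enclosures at the chart transitions. A parameterized Krawczyk operator, contracting uniformly for $a$ in an interval $[0,a_{\max}]$, then yields existence and local uniqueness of the solution for every such $a$, hence a genuine one parameter family; evaluating the operator at $a=0$ simultaneously proves the ejection--collision orbit and shows the family accumulates to it as $a\downarrow 0$. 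The finiteness of the synodic transition time follows by bounding the time rescaling integral $\int |w|^2\,ds$, and its Moon-chart analogue, over the validated solution, which is automatically finite away from, and regularized through, the collisions.

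The main obstacle will be the rigorous control of the enclosures during the near-Moon passage and across the coordinate-chart transitions. The vector field is enormous near the Moon, so without the Moon Levi-Civita regularization the wrapping effect and step-size restrictions would destroy the validated bounds; with it, the difficulty shifts to performing the nonlinear changes of coordinates between the synodic frame and the two regularized charts on interval enclosures without excessive overestimation, and to doing so uniformly in $a$ all the way down to the singular limit $a=0$. Ensuring that the parameterized Krawczyk bounds remain valid and non-degenerate at $a=0$---so that the degenerate ejection--collision endpoint is captured as a bona fide limit of the smooth family rather than lost to blow-up---is the crux of the argument.
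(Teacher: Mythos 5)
Your proposal is correct in substance and shares the paper's skeleton---$S$-symmetry reduction to a perpendicular-crossing two point BVP, Levi-Civita charts at both primaries joined by multiple shooting, and a Newton--Krawczyk validation performed uniformly over a parameter interval---but it organizes the continuation genuinely differently from Section \ref{sec:symmetric-orbits}. The paper keeps the Jacobi constant $c$ as the continuation parameter: for each $c$ it solves the balanced system $F_c=0$ of Equation \eqref{eq:Fc-choice}, in which the near-Earth crossing coordinate $x_0(c)$ is an \emph{unknown} and the equations are balanced by the dissipative unfolding parameter $\alpha$ rather than by freeing the energy. The collision is then located indirectly: interval arithmetic verifies $x_0(c_0-\delta)>0>x_0(c_0+\delta)$ together with the validated implicit-function-theorem bound $\frac{d}{dc}x_0(c)<0$ from Equation \eqref{eq:implicit-dx-dc}, and Theorem \ref{th:Lyap-through-collision} (Bolzano plus monotonicity) produces a unique $c^{\ast}$ with $x_0(c^{\ast})=0$, i.e.\ the ejection--collision, with periodic orbits at every other energy in the interval. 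You instead take the crossing coordinate $a$ as the parameter and free $C$ as an unknown, so the collision sits exactly at $a=0$ and no Bolzano or monotonicity step is needed; what this costs you is the energy-parameterization of the family: to recover the paper's sharper conclusion in Theorem \ref{th:CAP-Lyap} (a unique collision energy, and a periodic orbit for \emph{every} other energy in an explicit interval) you would still need a validated bound $C'(a)\neq 0$, which is exactly the same kind of derivative estimate the paper performs for $\frac{d}{dc}x_0$. Each route buys something: yours pins the degenerate member of the family at a known parameter value and avoids the sign checks; the paper's delivers the tube literally fibered by the Jacobi constant, matching how the result is stated.

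Two further remarks. First, for $a\neq 0$ you must also verify that the far, near-Moon crossing stays bounded away from the Moon collision (the paper's hypothesis \eqref{eq:x6-nonzero}); this follows from your validated enclosures, but it is a needed hypothesis, not an afterthought---without it the ``periodic'' orbits could themselves be collision orbits with the Moon. Second, the crux you identify---possible degeneracy or blow-up of the Krawczyk bounds at $a=0$---is not actually a difficulty: in the Earth's regularized chart the collision circle consists of ordinary initial conditions and your BVP is perfectly regular at $a=0$; removing that apparent singularity is precisely what regularization buys. In both your route and the paper's, the genuine computational cost lies in the validated integration and the interval evaluation of the chart transitions during the near-Moon passage, as you correctly anticipate.
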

\begin{theorem} \label{thm:main-thm-5}Consider the PCRTBP with equal masses. 
There exists a family of periodic orbits which accumulate to a consecutive ejection-collision orbit involving both primaries.
Each of the ejection-collisions occurs in finite time in synodic/unregularized coordinates.
The Jacobi constant of the consecutive ejection-collision orbit is
$C \approx 2.06$.
(See page \pageref{thm:doubleCollision} for the precise statement.)
\end{theorem}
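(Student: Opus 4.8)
The plan is to treat this as a near-collision two point boundary value problem in regularized coordinates, in exactly the a posteriori spirit used for the preceding theorems, but now with \emph{two} Levi-Civita charts active because the limiting orbit passes through collision with \emph{both} primaries. First I would regularize each collision separately, producing a smooth vector field in a Levi-Civita chart around each primary in which the collision point becomes the chart origin and an ejection-collision orbit becomes a regular orbit passing through that origin. Fixing the Jacobi level at the (to be validated) value $C \approx 2.06$, the equal-mass hypothesis supplies the reflectional and primary-swap symmetries of the CRTBP, and a consecutive ejection-collision that leaves one primary, collides with the other, and returns is invariant under the composition of these symmetries. This lets me represent the whole orbit by a single fundamental arc running from the origin of one Levi-Civita chart to a symmetry section (or to the origin of the second chart), so that the global object is recovered by symmetry and only the arc must be solved for.

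Next I would build a multiple shooting scheme for this arc. Because the arc passes from a neighborhood of the first primary, through the bulk of phase space, and into a neighborhood of the second primary, I would partition it into sub-arcs, integrating the first and last in the respective Levi-Civita charts and the middle sub-arcs in synodic coordinates, and impose $C^1$ matching conditions at the chart-change interfaces together with the terminal symmetry/collision condition. The unknowns are the interface points, the integration times, and the ejection data at the collision origin. After computing an approximate solution numerically, I would enclose the flow maps and their variational equations on each sub-arc with the validated Taylor integrator, assemble interval enclosures of the zero-finding map $F$ and its derivative $DF$ at the numerical solution, and verify the Newton-Krawczyk contraction hypotheses. This yields a true consecutive ejection-collision orbit together with a rigorous enclosure of its Jacobi constant, establishing the $C \approx 2.06$ claim.

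To produce the accumulating family rather than a single orbit, I would embed the collision orbit in a parameterized BVP, replacing the terminal collision condition by the condition that the arc meets the symmetry section at a signed offset $s$ from the Levi-Civita origin, so that $s = 0$ recovers the collision orbit while $s \ne 0$ gives genuine symmetric periodic orbits that only pass \emph{near} the primaries. The invertibility of $DF$ furnished by the Krawczyk step is an open, uniform condition, so a validated implicit function argument continues the solution to all sufficiently small $s$, giving the one parameter family; as $s \to 0$ these orbits converge to the collision orbit, i.e.\ the tube terminates at the consecutive ejection-collision. The finite synodic time statement follows because the Levi-Civita time needed to reach each origin is finite and the change of time variable expresses the synodic elapsed time as an integral of a bounded factor over a finite Levi-Civita interval, which the same enclosures bound away from infinity.

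The main obstacle will be the rigorous integration through the two near-collision regions and the two coordinate transitions while keeping the enclosures tight enough that the assembled $DF$ is verifiably invertible. Unlike the single-primary case of Theorem \ref{thm:main-thm-4}, the arc must be propagated accurately across \emph{both} Levi-Civita charts and the intervening synodic segment, and wrapping and blow-up of the interval enclosures near each origin is the principal numerical threat. Establishing that the non-degeneracy bounds hold \emph{uniformly} in the offset parameter $s$, so that the family genuinely accumulates onto the collision orbit rather than merely existing for isolated energies, and correctly relating Levi-Civita time to synodic time near each collision, are the delicate points of the argument.
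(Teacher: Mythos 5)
Your overall framework---Levi-Civita regularization at both primaries, reduction by the equal-mass reflection symmetry, multiple shooting with synodic middle segments, and Newton--Krawczyk validation---is the same machinery the paper uses (Sections \ref{sec:symmetric-orbits} and \ref{sec:PO_collisions}). But there is a genuine gap in how you treat the Jacobi constant, and it breaks both halves of your argument. You cannot ``fix the Jacobi level at the (to be validated) value $C \approx 2.06$'' and then validate a consecutive ejection-collision orbit at that energy: the double collision is a codimension-one phenomenon in the energy, occurring only at a single isolated, a priori unknown value $c^{\ast}$, which is not a number you can prescribe in advance. The paper never solves a collision-to-collision BVP at fixed energy. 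Instead (Equation \eqref{eq:Fc-equal}, together with the analogue of Theorem \ref{th:Lyap-through-collision} with $\hat{y}_4$ in place of $x_0$), it solves, for \emph{every} $c$ in an interval, a square symmetric BVP whose arc runs from the symmetry section $\mathcal{R}$ back to $\mathcal{R}$ and passes through the Levi-Civita chart of $m_1$, with the collision encoded as an \emph{interior} condition: at an unknown time $s_4$ the coordinate $\hat{x}$ vanishes, and the remaining coordinate $\hat{y}_4(c)$ is an \emph{output} measuring the distance from collision. The collision energy is then located by a Bolzano argument: interval arithmetic verifies that $\hat{y}_4(c)$ changes sign across the interval and that $\frac{d}{dc}\hat{y}_4(c)$ has a definite sign (via the implicit-function formula \eqref{eq:implicit-dx-dc}), so there is a unique $c^{\ast}$ with $\hat{y}_4(c^{\ast}) = 0$. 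This sign-change-plus-monotonicity step is entirely absent from your proposal, and it is the heart of the proof.

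Second, the family you construct does not exist as described. At fixed energy the $R$-symmetric periodic orbits are isolated: the symmetric shooting system is square, and the Krawczyk non-degeneracy gives local uniqueness, so within the level set $\{E = C^{\ast}\}$ there is no one-parameter family indexed by your offset $s$---the offset is determined by the fixed-energy BVP, not free to prescribe. The accumulating family in Theorem \ref{thm:doubleCollision} is parameterized by the Jacobi constant: one periodic orbit for each $c \in [c_0-\delta, c_0+\delta] \setminus \{c^{\ast}\}$, accumulating to the collision orbit as $c \to c^{\ast}$. Your continuation in $s$ can be salvaged only by letting the energy become an unknown solved as a function $c(s)$; that dual formulation (prescribe the offset, solve for the energy) is legitimate, and would even let you validate the collision orbit directly at $s=0$ as an isolated zero of a square system, yielding an enclosure of $c^{\ast}$ without Bolzano. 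But as written---energy fixed, offset varied---the system is overdetermined and the implicit function theorem does not apply. A smaller issue: your fundamental arc from the LC origin to the symmetry section, with free ejection data, only produces half of the loop (a heteroclinic from $m_1$-collision to $m_2$-collision) unless the incoming and outgoing collision data are matched through the regularized flow; the paper avoids this by making the collision an interior point of an arc whose two endpoints both lie on $\mathcal{R}$, so that a single application of the $R$-symmetry closes the loop.
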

}

\correction{comment 4}{Each of the theorems
is interesting in its own right, as we elaborate on in the remarks below.
Nevertheless, we note that the mass ratios and energies in the theorems have
been chosen primarily to illustrate that our approach
can be applied in many different settings.
Similar theorems  could be proven at other parameter values, 
or in other problems involving collisions, 
using the methodology developed here. We also remark that our results
make no claims about global uniqueness.  There could be many other such 
orbits for the given parameter values.  However, due to the transversality, 
such orbits cannot be arbitrarily close to the orbits whose existence we prove.
}


\begin{remark}[Ballistic transport] \label{rem:ballisticTransport}
{\em
Theorem 1 establishes the existence of ballistic transport, 
or zero energy transfer, from one primary to the other in 
finite time (think of this as a ``free lunch''  trip between the primaries).  
In physical terms, ballistic transport allows debris 
to diffuse between a planet and it's moon, or between a 
star and one of its planets, using only the natural dynamics
of the system.  This phenomena is observed 
for example when Earth rocks, ejected into space
after a meteor strike, are later discovered on the Moon
\cite{earthMoonRock} (or vice versa). 
Martial applications of low energy Moon-to-Earth
transfer are discussed in  \cite{next100Years,theMoonIsHarsh}.
Mathematically rigorous existence proofs for primary-to-primary
ejection-collision orbits
have until now required both small mass ratio and high velocity -- that is,
negative enough Jacobi constant.  See \cite{MR682839}.

In a similar fashion, Theorem 2 establishes the existence of zero 
energy transfers involving $L_4$ and a primary, and could for example 
be used to design space missions which visit the triangular libration 
points.   
}
\end{remark}

\begin{remark}[Termination orbits] \label{rem:termination}
{\em
Theorems \ref{thm:main-thm-3}, \ref{thm:main-thm-4}, \ref{thm:main-thm-5} involve the termination of
tubes of periodic orbits.
\cor{
Indeed, a corollary of Theorem \ref{thm:main-thm-3} is that there are families/tubes of 
periodic orbits accumulating to each of our $L_4$ homoclinics.
This follows from a theorem of Henrard \cite{MR0365628}.
Another corollary is that, near each of the orbits of Theorem \ref{thm:main-thm-3},  
there is an invariant chaotic subsystem in the $L_4$ energy level.
This is due to a theorem of Devaney \cite{MR0442990}.
}

\cor{

Numerical evidence for the existence of $L_4$ homoclinics
in the equal mass CRTBP appears already in the work of Str\"{o}mgren
in the 1920's 
\cite{stromgrenMoulton,stromgrenRef}.
See \cite{szebehelyTriangularPoints,onMoulton_Szebehely,theoryOfOrbits}
for more discussion.
Such orbits were once called \textit{asymptotic periodic orbits}, in light of 
the fact that they are closed loops with infinite period.
Despite of the fact that they appeared in the literature more than a hundred
years ago, the present work provides -- to the the best of our knowlege -- the first
mathematically rigorous existence proof of transverse $L_4$ homoclinics in the CRTBP.

}

In Theorems \ref{thm:main-thm-4} and \ref{thm:main-thm-5}, we first prove the existence of 
the ejection-collision orbits, and then directly
establish the existence of one parameter families of periodic orbits
terminating at these ejection-collision orbits 
by an application of the implicit function theorem. 

Termination orbits have a long history in celestial mechanics, 
and are of fundamental importance in equivariant bifurcation 
theory.  We refer the interested reader to the discussion of ``Str\"{o}mgren's
termination principle'' in Chapter 9 of
\cite{theoryOfOrbits}, and to the works of 
\cite{MR1879221,MR2042173,MR2969866} on equivariant families
in the Hill three body and restricted three body problems.  See also 
the works of \cite{MR3007103,MR2821620} on global continuation families
in the restricted $N$-body problem.}
\end{remark}

\correction{comment 6}{
\begin{remark}[Final fate of the velocity variables] \label{rem:Chazy}
{\em
Chazy's 1922 paper \cite{MR1509241} proved an important classification result
describing the possible asymptotic behavior of the position variables of three 
body orbits defined for all time. In the context of the CRTBP, Chazy's result says that 
orbits are either hyperbolic (massless particle goes to infinity with non-zero final 
velocity), parabolic (massless particle goes to infinity with zero final velocity),
bounded (massless particle remains in a bounded region for all time), or 
oscillatory (the lim sup of the distance from the origin 
is infinite, while the lim inf is finite --
that is, the massless particle makes infinitely many excursions between 
neighbourhoods of the origin and infinity).
An analogous complete classification theorem for the velocity 
variables does not exist, however we note that our Theorems 2 and 3 and establish 
the existence of orbits with interesting asymptotic velocities.  For example 
the $L_4$ to collision orbits of Theorem 2 have zero asymptotic velocity and 
reach infinite velocity in forward
time (or vice versa), while the homoclinics of Theorem 3 have zero forward and 
backward asymptotic velocity. }
\end{remark}
}

\begin{remark}[Moulton's $L_4$ periodic orbits] \label{rem:Moulton}
{\em
The family of periodic orbits whose existence 
is established in Theorem 5 is of Moulton's $L_4$ type, 
in the sense of \cite{moultonBook}.  
That is, these are periodic orbits which when projected into the $(x,y)$
plane (i.e. the configuration space) have non-trivial winding about $L_4$.
See also Chapter 9 of \cite{theoryOfOrbits}, 
or the works of \cite{onMoulton_Szebehely,szebehelyTriangularPoints}
for a more complete discussion of the history 
(and controversy) surrounding Moulton's orbits.
The present work provides, to the best of our knowlege, 
the first proof that Moulton type 
$L_4$ periodic orbits exist.  
}
\end{remark}

\cor{The remainder of the paper is organized as follows.  
The next three subsections briefly discuss some literature
on regularization of collision, numerical computational methods, 
and computer assisted methods of proof in celestial mechanics. We conclude them with Remark \ref{rem:puttingItAllTogether}  which places our work in the context of the discussed works.
These sections can be skimmed by the reader who wishes to dive right 
into the mathematical setup, which is described in 
Section \ref{sec:problem}.  There we describe the problem setup 
in terms of an appropriate multiple shooting problem, and 
establish tools for solving it. In particular, we 
define the unfolding parameters which we use to isolate
transverse solutions in energy level sets and use this notion to formulate 
Theorem \ref{th:single-shooting} and Lemma \ref{lem:multiple-shooting-2} 
which we later use for our computer assisted proofs. 
The role of the unfolding parameter is to add a missing variable, which is needed for solving the problems by means of zero finding with a Newton method. The unfolding parameter is an artificial variable added to the equations. It is added though in a way that ensures that we can recover the solution of the original problem from the appended one.
(See Remark \ref{rem:unfolding} for more detailed comments.)\label{explanation-unfolding} }
 In Section \ref{sec:PCRTBP}
we describe the planar CRTBP and it's Levi-Civita regularization.
Sections \ref{sec:ejectionToCollision}, \ref{sec:L4_to_collision}, and 
\ref{sec:symmetric-orbits}
describe respectively the formulation of the multiple shooting 
problem for primary-to-primary ejection-collision orbits, 
$L_4$ to ejection/collision orbits, $L_4$ homoclinic orbits,
and periodic ejection-collision families.
Section \ref{sec:CAP} describes our computer assisted proof strategy and 
illustrates how this strategy is used to prove our main theorems.  
Some technical details are given in the appendices.  
The codes implementing the computer assisted proofs discussed in 
this paper are available at the homepage of the first author MC.

\section*{Literature review}
\subsection{Geometric approach to collision dynamics} \label{sec:collisionLit}

\correction{comment 5}{
Suppose one were to choose, more or less arbitrarily,
 an initial configuration of the gravitational $N$-body problem.
A fundamental  question is to ask  ``does 
this initial configuration lead to collision between two or more of the bodies
 in finite time?''
The question is delicate, and remains central to the theory
even after generations of serious study.  
 Saari for example has shown that the set of orbits which reach collision
 in finite time (the \textit{collision set})
has measure zero \cite{MR295648,MR321386}, so that 
$N$-body collisions are in some sense physically unlikely.   
On the other hand, results due to Kaloshin, Guardia, and Zhang
prove that the collision set  can be $\gamma$-dense in 
open sets \cite{MR3951693}.  Though this notion of density is 
technical, the result shows that the embedding of the collision
set may be topologically complicated.  
}

\cor{
One of the main tools for studying collisions
is to introduce coordinate transformations which 
regularize the singularities.
The virtue of a regularizing coordinate change,
from a geometric perspective, is that it transforms
the singularity set in the original coordinates into a 
nicer geometric object.   For example, after 
Levi-Civita regularization in the planar CRTBP, the singularity sets  
(restricted to a particular fixed energy level)
are transformed into circles \cite{MR1555161}.
We review the Levi-Civita coordinates 
for the CRTBP in Section  \ref{sec:PCRTBP},
 and refer the interested reader 
to Chapter 3 of \cite{theoryOfOrbits}, to 
the notes of \cite{cellettiCollisions,MR633766}, and to the works of 
\cite{MR562695,MR633766,MR359459,MR3069058,MR638060} 
for much more complete overview of the pre-McGehee literature on 
different regularization techniques.  
}

\cor{Advecting the regularized singularity set under the backward flow 
for a time $T$ leads to a smooth manifold of initial conditions whose orbits collide 
with the primary in forward time $T$ or less.  This is referred to as a
local collision manifold.  Running time backwards leads to 
a local ejection manifold.    Studying intersections between ejection 
and collision manifolds,
and their intersections with other invariant 
objects, provides invaluable insights.  } 


\cor{One of the first works to combine this geometric picture of 
collisions with techniques from the qualitative theory of dynamical systems
is the paper by McGehee \cite{MR359459}.  Here, a general method 
for regularizing singularities is developed and used to 
study triple collisions in an isocoleces three body problem. 
As an illustration of the power of the method, the author proves
the existence of an infinite set of initial conditions whose orbits achieve 
arbitrarily large velocities after near collision.  }

\cor{Building on these 
results, Devaney proved the existence of infinitely many
ejection-collision orbits in the same model, 
when one of the masses is small \cite{Devaney1980TripleCI}.  
Further insights, based on similar techniques, are found in the
works of Simo, ElBialy, 
and Lacomba and Losco, and Moeckel
\cite{MR640127, ElBialy:1989td,MR571374,MR571374,10.2307/24893242}.
Using similar methods, 
Alvarez-Ram\'{i}rez, Barrab\'{e}s, Medina, and Oll\'{e}
obtain numerical and analytical results for a related symmetric
collinear four-body problem in \cite{MR3880194}.
In \cite{MR638060}, Belbruno developed a new regularization technique
for the spatial CRTBP and used it to prove the existence of families of periodic orbits
which terminate at ejection-collision when the mass ratio is small enough.  
This is a perturbative analog of our Theorem \ref{thm:main-thm-4} (but in the spatial problem).}

\cor{The paper \cite{MR682839} by Llibre 
is especially relevant to the present study, as the author
establishes a number of 
theorems about ejection-collision orbits in the planar CRTBP.  
Taylor expansions for the local
ejection/collision manifolds
in Levi-Civita coordinates are given and used to show that
the local collision sets are homeomorphic to cylinders
for all values of energy and any mass ratio.  
Then, for mass ratio sufficiently small, the author proves that
the ejection/collision manifolds intersect
twice near the large primary.  This gives the existence 
of a pair of ejection-collision orbits which depart from and 
return to the large body.  
For Jacobi constant 
sufficiently negative and mass ratio sufficiently small, 
he also proves the existence of an ejection form the large body 
which collides with the small body in finite time.  This is a perturbative 
analogue of our Theorem \ref{thm:main-thm-1}.  We note that the 
large primary to small primary ejection-collisions in  \cite{MR682839} 
are ``fast'', in the sense that the relative velocity between the 
infinitesimal body and the large primary is never zero.  Compare this 
to the orbits of our Theorem \ref{thm:main-thm-1}, which twice attain zero relative velocity 
with respect to the large primary (the orbits make a ``loop''). }

\cor{A follow up paper by Lacomba and Llibre
\cite{MR949626} shows that the ejection-collision orbits of  \cite{MR682839} 
are transverse, and as a corollary the authors prove that the CRTBP has no 
$C^1$ extendable regular integrals. Heuristically speaking, this says that
 the Jacobi integral is the only conserved quantity in the CRTBP and hence the 
 system is not integrable.
Transversality is proven analytically for small values of the mass ratio in the CRTBP, 
and studied numerically for Hill's problem.  
In \cite{MR993819}, Delgado proves the transversality result for the Hill's problem, using 
a perturbative argument for $1/C$ small (large Jacobi constant).
We remark that the techniques developed in the present work
could be applied to the Hill problem
for non-perturbative values of $C$.  
We also mention the work of Pinyol  \cite{MR1342132},
which uses similar techniques to prove the 
existence of ejection-collision orbits for the elliptic CRTBP.}

\cor{A number of results for collision and near collision orbits have been established using 
KAM arguments in Levi-Civita coordinates.  
For example in \cite{MR967629} and for the planar
CRTBP, Chenciner and Llibre prove the existence
of invariant tori which intersect the regularized collision circle transversally.  
The argument works for $1/C$ small enough and for any mass ratio.
The dynamics on the invariant tori are conjugate to irrational rotation, 
so that their existence implies that there are infinitely many orbits which pass arbitrarily close
to collision infinitely many times.  Returning to the original coordinates,
the authors refer to these as punctured invariant tori (punctured by the collision set).
Punctured tori in an averaged four body problem (weakly coupled double Kepler
problem) are studied by F\'{e}joz in \cite{MR1849229}, and this work
is extended by the same author to the CRTBP, for a parameter
 regime where the system can be viewed as a perturbation of two uncoupled 
 Kepler problems \cite{MR1919782}.
See also the work of Zhao \cite{MR3417880} for a proof that there 
exists a positive measure set of punctured toi in the spatial CRTBP.}

\cor{In \cite{MR1805879}, Bolotin and Mackay 
use variational methods to prove the existence of 
chaotic collision and near collision dynamics in the planar CRTBP.
The argument studies some normally hyperbolic invariant 
manifolds whose stable/unstable manifolds, 
in regularized coordinates, intersect 
one another near the local ejection/collision manifolds.
The small parameter in this situation is the mass ratio, and the results hold 
in an explicit closed interval of energies.  
The same authors extend the result to the spatial CRTBP in \cite{MR2245344}, 
and Bolotin obtains the existence of chaotic near collision 
dynamics for the elliptic CRTBP in 
\cite{MR2331205}.}

\cor{A more constructive (non-variational) approach to studying chaotic 
collision and near collision dynamics is found in Font, Nunes, and Sim\'{o}
 \cite{MR1877971}. Here, the authors prove the existence of
 chaotic invariant sets containing orbits which make
 infinitely many near collisions 
 with the smaller body in the planar CRTBP.  Again, $\mu$ is taken 
 as the small parameter and the authors compute perturbative 
 expansions for some Poincar\'e maps in Levi-Civita coordinates.
 Using these expansions they directly prove the existence of horseshoe dynamics.  
Since the argument is constructive, they are also able to 
show, via careful numerical calculations, that the
expansions provide useful predictions for $\mu$ as large as $10^{-3}$.
In a follow up paper \cite{MR2475705}, the same authors 
numerically compute all the near collision periodic orbits
in a fixed energy level satisfying certain bounds on the return time, and 
present numerical evidence for the existence of chaotic dynamics between
these.}
 
\cor{In the paper  \cite{MR3693390}, Oll\'{e}, Rodr\'{i}guez, and Soler
introduce the notion of an $n$-ejection-collision orbit.  This is an orbit
which is ejected from larger primary body, and which makes an excursion 
where it achieves a relative maximum distance from the large primary 
 $n$-times before colliding with it: such orbits look like flowers with $n$ petals
 and the primary body at the center.  Finding $n$-ejection-collision orbits
necessitates studying the local ejection/collision manifolds 
at a greater distance from the regularized singularity set than in previous works.   
The authors also numerically 
study the bifurcation structure of these families for $1 \leq n \leq 10$
over a range of energies.     } 
 
\cor{In a follow-up paper \cite{MR4110029}, the same authors prove the 
 existence of four families of $n$-ejection-collision orbits for any value of $n \geq 1$,
 for $\mu$ small enough, and for values of the Jacobi constant sufficiently large.  
 The argument exploits an analytic solution of the variational equations in 
 a neighborhood of the regularized singularity set in the Levi-Civita coordinates. 
 They also perform large scale numerical calculations which suggest that the $n$-ejection-collision
 orbits persist at all values of the mass ratio, and for large ranges of the Jacobi constant.  
 Another paper by the same authors numerically studies the manifold of ejection 
 orbits, for both the large and small primary bodies, over the whole range of mass
 ratios and for a number of different values of the Jacobi constant \cite{MR4162341}.
The authors also propose a geometric mechanism for finding ejection orbits which 
transit from one primary to the neighborhood of the other.  More precisely, they 
numerically compute intersections between the ejection manifold
 manifold and the stable manifold of a periodic orbit in the $L_1$ Lyapunov 
family at the appropriate energy level, and study the resulting dynamics.  }
  
 \cor{In the paper \cite{tereOlleCollisions}, Seara, Oll\'{e}, Rodr\'{i}guez, and Soler 
 dramatically extend the results of \cite{MR4110029}.  First, they  
 show that the existence of an $n$-ejection-collision is equivalent to  
an orbit with $n$-zeros of the angular momentum: a scalar quantity.  Using this
advance they are able to remove the small mass condition, and prove that 
for either primary, at any value of the mass ratio, and for any $n \geq 1$, there 
exist four $n$-ejection-collision orbits.  The argument is based on an 
application of the implicit function theorem, with the $1/C$ as the small parameter.
The authors also make a detailed numerical study of this enlarged family of 
$n$-ejection-collision orbits, taking $\mu \to 1$.  Using insights obtained from these 
numerical explorations, they propose an analytical hypotheses which allows them to prove 
existence results for $n$-ejection-collision orbits 
 for the Hill problem, again for large enough energies.  
We remark that, since $n$-ejection-collision orbits can be formulated 
as solutions of two point boundary value problems
beginning and ending on the regularized collision circle, 
an interesting project could be to 
prove the existence of such orbits for smaller values 
of $C$ using the techniques developed in the present work. }

 %

\subsection{Numerical calculations, computational mathematics, and celestial mechanics} 
\label{sec:numerics}
\correction{comment 7}{
Computational and observational tools for predicting the 
motions of celestial bodies have roots in antiquity, so that even a  terse overview
is beyond the scope of the present study.
Nevertheless, we remark that the numerical methods like  
integration techniques for solving initial value problems, and bisection/Newton 
schemes for solving nonlinear systems of equations have been applied to 
the study of the CRTBP at least since G.H. Darwin's 1897 treatise on periodic orbits
\cite{MR1554890}. 
The reader interested in the history of pen-and-paper calculations for the  
CRTBP will find the 
work of Moulton's  group in Chicago, as well as Stromgren's group 
in Copenhagen, from the 1910's to the 1930's of great interest. 
Detailed discussion of their accomplishments
are found in  \cite{stromgrenMoulton,moultonBook}, and in Chapter 9 of 
Szebehely's Book \cite{theoryOfOrbits}.
}

\cor{The historic work of the mathematicians/human computers at the NACA, and 
subsequent NASA space agencies, had a profound effect on the shape of 
twentieth century affairs, as chronicled in a number of books and films. 
See for example  \cite{hidenHumanComputers,hiddenFigures,dorthyVaughn,nasaComputers,mcmastersPage}.
The ascension of digital computing and the dawn of the space race 
in the 1950's and 1960's led to an explosion of computational work in celestial mechanics.  
Again, the literature is vast and we refer to the books of 
Szebehely \cite{theoryOfOrbits}, Celletti and Perozzi \cite{alesanderaBook}, and Belbruno
\cite{MR2391999} for more thorough discussion of historical developments and
the surrounding literature.  }

\cor{In the context of the present work, it is important to discuss the 
idea of recasting transport problems into 
two point boundary value problems (BVPs).  The main idea is to 
project the boundary conditions for an orbit segment onto a 
representation of the local stable/unstable manifold of some invariant object
(both linear and higher order expansions of the stable/unstable manifolds
are in frequent use).  Then a homoclinic or heteroclinic connection is
reconceptualized as an orbit beginning on the unstable and terminating 
on the stable manifold, giving a clear example of a BVP.  }

\cor{The papers by Beyn, Friedman, Doedel, Kunin
\cite{MR618636,MR1068199,MR1007358,MR1205453,MR1456497}
lay the foundations for such BVP methods. 
 A BVP approach for computing periodic orbits in conservative systems is developed by 
Mu\~{n}oz-Almaraz, Freire, Gal\'{a}n, Doedel, and Vanderbauwhede in 
\cite{MR2003792}.  In particular, they introduce an unfolding parameter
for the periodic orbit problem, an idea we make extensive use of in Section \ref{sec:problem}.
Connections between periodic orbits are studied by 
Doedel, Kooi, Van Voorn, and Kuznetsov in 
\cite{MR2454068,MR2511084}, and 
Calleja, Doedel, Humphries, Lemus-Rodr\'{i}guez and Oldeman 
apply these techniques to the  CRTBP in
\cite{MR2989589}.}

\cor{BVP methods for computing connections between invariant objects
are central to the geometric approach to space mission design 
described in the four volume set of books by 
G\'{o}mez, Jorba, Sim\'{o}, and Masdemont
\cite{MR1867240,MR1881823,MR1878993,MR1875754}, and also in the 
book of Koon, Lo, Marsden, and Ross
\cite{MR1870302}.  A focused (and shorter) research paper describing the 
role of connecting orbits in the spatial CRTBP is found in the paper 
\cite{MR2086140} by G\'{o}mez, Koon, Lo, Marsden, Masdemont, and Ross,
and explicit discussion of the role of invariant manifolds in 
space missions which visit the moons of Jupiter
is found in the paper \cite{MR1884895}, by Koon, Marsden, Ross, and Lo.
A sophisticated multiple shooting scheme for computing families
of connecting orbits between periodic orbits in Lyapunov families is 
developed in  by Barrab\'{e}s, Mondelo, and Oll\'{e} in 
\cite{Barrabes:2009ve}, and extended by the same 
authors to the general Hamiltonian setting in \cite{Barrabes:2013ws}.
Recent extensions are found in the work of  
Kumar, Anderson, and de la Llave 
\cite{Kumar:2021vc,MR4361879} on connecting orbits
between invariant tori in periodic perturbations of the CRTBP, 
and by Barcelona, Haro, and Mondelo \cite{https://doi.org/10.48550/arxiv.2301.08526}
for studying families of connecting orbits between center manifolds.
A broad overview of numerical techniques for studying transport phenomena 
in $N$-body problems is found in the review by 
Dellnitz, Junge, Koon, Lekien,  Lo, Marsden,  Padberg, Preis, Ross, and Thiere
\cite{MR2136742}. }

\cor{We must insist that the references given in this subsection in no way 
constitute a complete list. 
Our aim is only to stress the importance of BVP techniques, and to
suggest their rich history of application in the celestial mechanics literature,
while possibly directing the reader to more definitive sources.   
}

\subsection{Computer assisted proof in celestial mechanics} \label{sec:capLit}
Constructive, computer assisted proofs are a valuable tool in 
celestial mechanics, as they facilitate the study $N$-body dynamics
far from any perturbative regime, and in the absence of any small parameters
or variational structure.   Computer assisted arguments usually begin with 
careful numerical calculation of some dynamically
interesting orbits.  From this starting point, one tries to construct
a posteriori arguments which show that there are true orbits with the desired 
dynamics nearby.  Invariant
objets like equilibria, periodic orbits, quasi-periodic solutions (invariant tori), 
local stable/unstable manifolds, and connecting orbits between 
invariant sets
can be either reformulated as solutions of appropriate functional equations,
or expressed in terms of topological/geometric conditions in 
certain regions of phase space.  Given a numerical candidate which 
approximately satisfies either the functional equation or the geometric 
conditions, fixed point or degree theoretical arguments 
are used to prove the existence of true solutions nearby.  
As an example, the Newton-Krawczyk Theorem \ref{thm:NK}
from Section \ref{sec:CAP} is a tool for
verifying the existence of a unique non-degenerate
zero of a nonlinear map given a good enough
approximate root.  The theorem is proved using the 
contraction mapping theorem, and the interested 
reader will find many similar theorems 
discussed in the references below.  

To get a sense of the power of computer assisted methods of
proof in celestial mechanics, 
we refer the reader to the  
works of Arioli, Barutello, Terracini, Kapela, Zgliczy\'{n}ski, 
Sim\'{o}, Burgos, 
Calleja, Garc\'{i}a-Azpeitia, Lessard, Mireles James, 
Walawska, and Wilczak
\cite{MR2112702,MR2259202,
MR2012847,MR2185163,MR3622273,
MR2312391,MR3896998,MR4208440,MR3923486}
on periodic orbits, the works of 
 Arioli, Wilczak, Zgliczy\'{n}ski, Capi\'{n}ski, Kepley, Mireles James,Galante, and Kaloshin
\cite{MR1947690,MR1961956,MR3032848,MR3906230,MR2824484}
on transverse connecting orbits and chaos,  
the works of Capi\'{n}ski, Guardia, Mart\'{i}n, Sera, Zgliczy\'{n}ski, Roldan, Wodka, and Gidea
 \cite{oscillations,capinski_roldan,diffusionCRTBP,maciejMarianDiffusion}
on oscillations to infinity, center manifolds, and Arnold diffusion, 
and to the works of Celletti, Chierchia, de la Llave, Rana, Figueras, Haro, Luque, 
Gabern, Jorba, Caracciolo, and Locatelli
\cite{MR1101365,MR1101369,alexCAPKAM,MR2150352,MR4128817}
on quasi-periodic orbits and KAM phenomena.  We remark that while this  
list is by no means complete, consulting these papers and the 
papers cited therein will give the reader a reasonable impression of the
state-of-the-art in this area.  More general references on computer 
assisted proofs in dynamical systems
and differential equations are found in the review articles
by   Lanford, Koch, Schenkel, Wittwer, van den Berg, Lessard, 
and G\'{o}mez-Serrano \cite{MR759197,MR1420838,jpjbReview,MR3990999},
and in the books by Tucker, and Nakao, Plum, and Watanabe
\cite{MR2807595,MR3971222}.

We also mention the recent review article by Kapela, Mrozek, Wilczak, and 
 Zgliczy\'{n}ski \cite{CAPD_paper}, which describes the use of the  
CAPD library for validated numerical integration of ODEs 
and their variational equations.  
The CAPD library is a general purpose toolkit, and can
be applied to any problem where explicit formulas for  
the vector field are known in closed form. We make extensive use
of this library throughout the present work.
Additional details about CAPD algorithms 
are found in  the papers by Zgliczy\'{n}ski, and Wilczak
\cite{MR1930946,cnLohner}, but the reader who is interested in 
the historical development of these ideas should consult the references
of  \cite{CAPD_paper}.  Methods for computing validated enclosures of 
stable/unstable manifolds attached to equilibrium solutions 
for some restricted three and four body problems
are discussed in \cite{MR3906230,MR3792792}. and these methods are 
used freely in the sequel.

\begin{remark}[Relevance of the present work] \label{rem:puttingItAllTogether}
{\em
\correction{comment 2}{
In light of the discussion contained in 
Sections \ref{sec:collisionLit}, \ref{sec:numerics}, and \ref{sec:capLit}
a few, somewhat more refined comments about the novelty of the present work
are in order. First, note that our   
results make essential use of the geometric formulation of 
collision dynamics discussed in Section \ref{sec:collisionLit}.
An important difference of perspective is that, since we work without small parameters,
we formulate multiple shooting problems describing
orbits with boundary conditions on the regularized collision set, 
rather than working with perturbative expansions for the local ejection/collision 
manifolds and studying intersections between them.  Once we have a 
good enough numerical approximation of the solution of a BVP we validate 
the existence of a true solution via a standard a posteriori argument.    
}

\cor{
In this sense our work exploits the BVP approach for studying 
dynamical objects discussed in Section \ref{sec:numerics}.  
Our shooting templates, discussed (see Section \ref{sec:problem}),
are general enough to allow for any number of coordinate swaps in 
any order, and allow us to shoot from stable/unstable manifolds, 
regularized collision sets, or discrete symmetry subspaces.
In this way we have one BVP framework which covers all the theorems
considered in this paper.  We note that the setup 
applies also to higher dimensional problems involving stable/unstable 
manifolds of other geometric objects, as seen for example in 
\cite{jayAndMaxime}. 
Our setup incorporates an unfolding parameter 
approach to BVPs for conservative systems -- as discussed in 
\cite{MR1870260,MR2003792,MR1992054} for periodic orbits.
An important feature of our setup is that 
the existence of a non-degenerate solution of the BVP implies transversality 
relative to the energy submanifold. 
One virtue of the abstract framework presented in Section 
\ref{sec:problem} is that we prove transversality results and properties
of the unfolding parameter only once, and they apply to all problems
considered later in the text -- rather than having to establish 
such results for each new problem considered.  }

\cor{Still, this shooting template framework is just a convenience.  
The main contribution of the present work is a flexible computer
assisted approach to proving theorems about collision dynamics
in celestial mechanics problems.
We remark that, until now, collisions have been viewed 
largely as impediments to the implementation of successful computer assisted proofs.
The present work demonstrates that well known tools from regularization 
theory can be combined with existing validated numerical tools and 
a posteriori analysis to prove interesting theorems about collisions
in non-perturbative settings. As applications, we prove a number of 
new results for the CRTBP.}
}
\end{remark}

\color{black}

\section{Problem setup} 

\label{sec:problem}

Consider an ODE with one or more first integrals or constants of motion.
For such systems, the level sets of the integrals 
give rise to invariant sets.  Indeed, the level sets are invariant 
manifolds except at critical points of the conserved quantities.  
In this section we describe a shooting method for two point
boundary value problems between submanifolds of the level set. To be more
precise, we consider two manifolds, parameterized (locally) by some
functions, which are contained in a level set.  We present a method which
allows us to find points on these manifold which are linked by a solution of
an ODE. This in particular implies that the two manifolds intersect. Our
method will allow us to establish transversality of the intersection within
the level set.

We consider an ODE%
\begin{equation}
x^{\prime}=f\left( x\right) ,  \label{eq:ode-1}
\end{equation}
where $f:\mathbb{R}^{d}\rightarrow\mathbb{R}^{d}$. Assume that the flow $%
\phi\left( x,t\right) $ induced by (\ref{eq:ode-1}) has an integral of
motion expressed as%
\begin{equation*}
E:\mathbb{R}^{d}\rightarrow\mathbb{R}^{k},
\end{equation*}
which means that 
\begin{equation}
E\left( \phi\left( x,t\right) \right) =E\left( x\right) ,
\label{eq:E-integral}
\end{equation}
for every $x\in\mathbb{R}^{d}$ and $t\in \mathbb{R}$. Fix $c\in\mathbb{R%
}^{k}$ and define the level set
\begin{equation}
M :=\left\{x \in \mathbb{R}^d : E(x)=c\right\} ,  \label{eq:M-level-set}
\end{equation}
and assume that $M$ is (except possibly at some degenerate points) a smooth
manifold. Consider two open sets $D_{1}\subset\mathbb{R}^{d_{1}}$ and $%
D_{2}\subset\mathbb{R}^{d_{2}}$ and two chart maps
\begin{equation}
P_{i}: D_{i}\rightarrow M\subset\mathbb{R}^{d}\qquad\text{for }i=1,2,
\label{eq:Pi-intro}
\end{equation}
parameterizing submanifolds of $M$.
\begin{remark}
One can for example think of the $P_{1}$ and $P_{2}$ as parameterizations of
the exit or entrance sets on some 
local unstable and stable manifolds, respectively, of some invariant object.
However in some of the applications to follow $P_{1,2}$ will parameterize
collision sets in regularized coordinates or some surfaces of symmetry for $%
f $.
\end{remark}
We seek points $\bar{x}_{i}\in D_{i}$ for $i=1,2$ and a time $\bar{\tau}\in%
\mathbb{R}$ such that%
\begin{equation}
\phi\left( P_{1}(\bar{x}_{1}),\bar{\tau}\right) =P_{2}\left( \bar{x}%
_{2}\right) .  \label{eq:problem-1}
\end{equation}
Note that if $P_1$ and $P_2$ parameterize some $\phi$-invariant manifolds, then
Equation \eqref{eq:problem-1} implies that these manifolds intersect. The
setup is depicted in Figure \ref{fig:setup}.

\begin{figure}[ptb]
\begin{center}
\includegraphics[height=4cm]{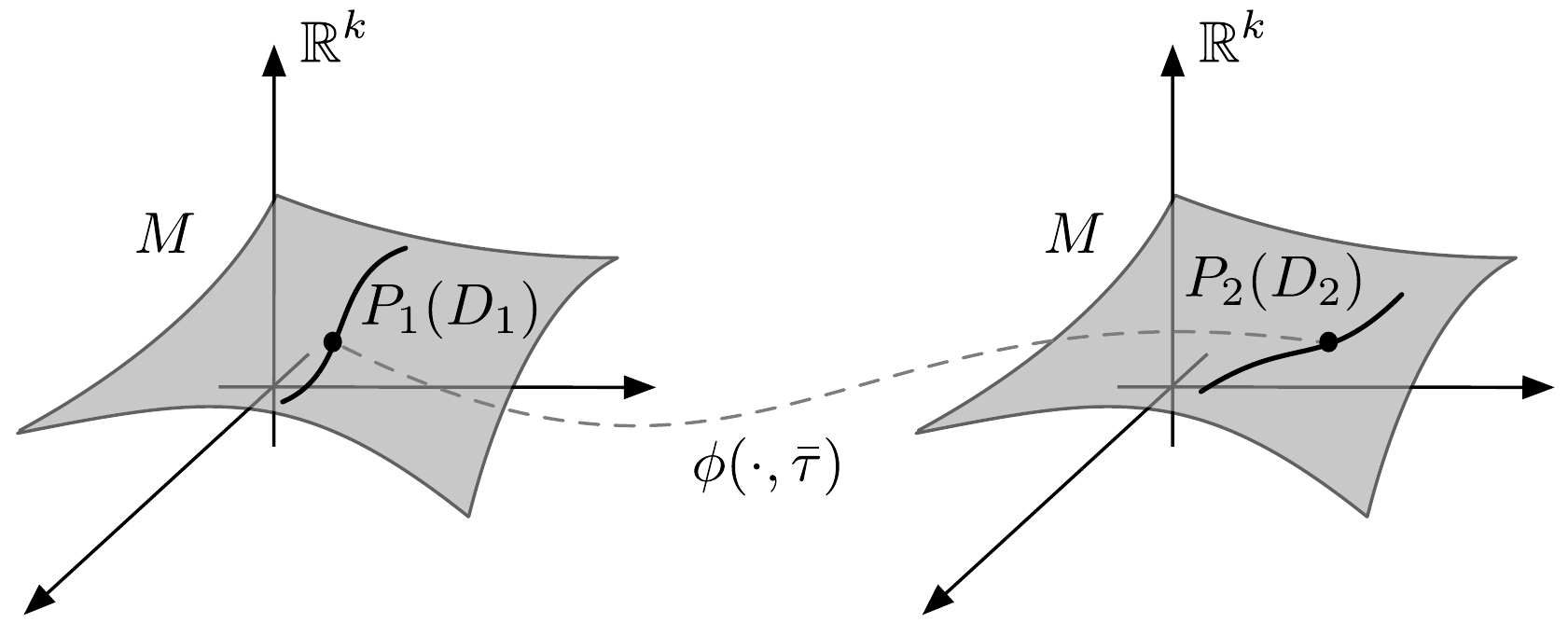}
\end{center}
\caption{The left and right plots are in $\mathbb{R}^{d}$ with a $d-k$
dimensional manifold $M$ depicted in gray. The manifolds $%
P_{i}(D_{i})\subset M$, for $i=1,2$, are represented by curves inside of $M$. We seek $\bar x_{1} \in D_1, \bar x_{2} \in D_2$ and $\bar{\tau} \in \mathbb{R}$ such
that $\protect\phi(P_{1}(\bar x_{1}),\bar{\tau})=P_{2}(\bar x_{2})$.
The two points $P_{i}(\bar x_{i})$, for $i=1,2$, are represented by dots.}
\label{fig:setup}
\end{figure}

\begin{remark}
Denote by $x_{1},x_{2}$ the points $x_{1}\in\mathbb{R}^{d_{1}}$ and by $%
x_{2}\in\mathbb{R}^{d_{2}}$: this avoids confusion with $x\in\mathbb{R}^{d}$. 
\end{remark}

We introduce a general scheme which allows us to:

\begin{enumerate}
\item Establish the intersection of the manifolds parameterized by $P_{1}$
and $P_{2}$ by means of a suitable Newton operator.

\item Establish that the intersection is transverse relative to the level
set $M$.

\item Provide a setup flexible enough for multiple shooting between charts in different coordinates.
\end{enumerate}

Our methodology is applied to establish connections between stable/unstable
and collision manifolds in the PCRTBP.

\subsection{Level set shooting}

We now provide a more detailed formulation of problem (\ref{eq:problem-1}) 
which allows us to describe connections between multiple level sets 
in distinct coordinate systems (instead of just one 
coordinate system as discussed in Section \ref{eq:M-level-set}).
This allows us to study applications to collision dynamics 
as boundary value problems joining points in different coordinate systems.
Let  $U_1, U_2 \subset \mathbb{R}^{d}$  
be open sets and consider smooth
functions $E_{1},E_{2}$%
\begin{equation*}
E_{i}:U_{i}\rightarrow\mathbb{R}^{k}\qquad\text{for }i=1,2,
\end{equation*}
for which $DE_{i}\left( x\right) $ is of rank $k$ for every $x\in U_{i}$,
for $i=1,2.$ We fix $c_{1},c_{2}\in\mathbb{R}^{k}$ and define the following
the level sets 
\begin{equation*}
M_{i}=\left\{ x\in U_{i}:E_{i}\left( x\right) =c_{i}\right\} \qquad\text{for 
}i=1,2,
\end{equation*}
and assume that $M_{i}\neq\emptyset$ for $i=1,2$. Observe that the
$M_i$ are smooth $d-k$ dimensional 
manifolds by the assumption that $DE_{i}$ are of rank $k$, for $i=1,2$.


Consider now a smooth function 
$R:U_{1}\times\mathbb{R\times\mathbb{R}}^{k}\rightarrow\mathbb{R}^{d}$ 
We introduce the following notation for coordinates%
\begin{equation*}
\left( x,\tau,\alpha\right) \in\mathbb{R}^{d}\times\mathbb{R\times \mathbb{R}%
}^{k},\qquad y\in\mathbb{R}^{d},
\end{equation*}
and define a parameter dependent family of maps $R_{\tau
,\alpha}:U_{1}\rightarrow\mathbb{R}^{d}$ by%
\begin{equation*}
R_{\tau,\alpha}\left( x\right) :=R\left( x,\tau,\alpha\right) ,
\end{equation*}
and assume that for each $(x, \tau, \alpha) \in \mathbb{R}^{d + k + 1}$, the
$d \times d$ matrix 
\begin{equation*}
\frac{\partial}{\partial x} R(x, \tau, \alpha),
\end{equation*}
is invertible, so that $R_{\tau, \alpha}(x)$ is a local diffeomorphism
on $\mathbb{R}^d$.

The following definition makes precise our assumptions about when 
$R_{\tau, \alpha}(x)$ takes values in $M_2$.

\begin{definition}
\label{def:unfolding}We say that $\alpha $ is an unfolding parameter for $R$ 
if the following two conditions are satisfied for every $x \in M_1$.
\begin{enumerate}
\item If $R_{\tau, \alpha}(x) \in M_2$, then $\alpha = 0$. 
\item If $R_{\tau, 0}(x) \in U_2$, then $R_{\tau, 0}(x) \in M_2$. 
\end{enumerate}
\end{definition}

\medskip

To emphasize that we are interested in points mapped 
from $M_{1}$ to $M_{2}$, we say that $\alpha$ is an unfolding
parameter for $R$\emph{\ from }$M_{1}$\emph{\ to }$M_{2}$. 

Assume from now on that $\alpha$ is an unfolding parameter for $R$. We
consider two open sets $D_{1}\subset \mathbb{R}^{d_{1}}$ and $D_{2}\subset%
\mathbb{R}^{d_{2}}$ where $d_{1},d_{2}\in\mathbb{N}$ and two smooth
functions 
\begin{equation*}
P_{i}:D_{i}\rightarrow M_{i},\qquad\text{for }i=1,2,
\end{equation*}
each of which is a diffeomorphism onto its image. Define%
\begin{equation*}
F:  D_{1}\times D_{2}\times\mathbb{R}\times%
\mathbb{R}^{k}\rightarrow\mathbb{\mathbb{R}}^{d}
\end{equation*}
by the formula
\begin{equation}
F\left( x_{1},x_{2},\tau,\alpha\right) :=R_{\tau,\alpha}\left( P_{1}\left(
x_{1}\right) \right) -P_{2}\left( x_{2}\right) .  \label{eq:F-def-1-shooting}
\end{equation}
We require that
\begin{equation}
d_{1}+d_{2}+1+k=d,  \label{eq:dimensions}
\end{equation}
and seek $\bar{x}_{1},\bar{x}_{2},\bar{\tau}$ such that%
\begin{equation} 
\label{eq:zero}
F\left( \bar{x}_{1},\bar{x}_{2},\bar{\tau}, 0\right) = R_{\bar{\tau }%
, 0}\left( P_{1}\left( \bar{x}_{1}\right) \right) -P_{2}\left( \bar{x}
_{2}\right) =0, 
\end{equation}
with $DF\left( \bar{x}_{1},\bar{x}_{2},\bar{\tau},0\right)$  an isomorphism. 
In fact, we do more than simply solve (\ref{eq:zero}). For some open interval $I \subset\mathbb{R}$ containing $\bar{\tau}$ we establish a transverse
intersection between the smooth manifolds $R\left( P_{1}\left( D_{1}\right)
,I,0\right) $ and $P_{2}\left( D_{2}\right) $ at $\bar{y} := P_{2}\left( \bar{x}%
_{2}\right) \in M_{2}$.  

\medskip

\begin{remark}[Role of the unfolding parameter] \label{rem:unfolding}
{\em
\correction{comment 8}{
The setup above, and in particular the roles of the
parameters $\alpha$ and $\tau$, might first appear puzzling. In
the applications we have in mind, $\tau$ is the time associated with the flow map
of an ODE. The unfolding parameter $\alpha$ deals with the fact that we
solve a problem restricted to the level sets $M_i$
for $i=1,2$.  Consider for example shooting from a 1D arc to 
a 1D arc in a 4D conservative vector field, where the arcs are 
in the same level set of the conserved quantity
(think of the arc as an outflowing segment on the boundary of a
local 2D stable/unstable manifold, or part of the collision set).
Since we are working in a 3D level set, the 2D surfaces formed
by advecting the arcs can intersect transversally relative to the level 
set.  However, since the arcs are parameterized by one variable functions, and
the time of flight $\tau$ is unknown, taking into account the dimension
of the vector field, we have 4 equations in 3 unknowns.  
Adding an unfolding parameter to the problem balances the equations, 
but it must be done carefully, so that the 
new variable does not change the solution set for 
the problem.  The idea was first exploited for periodic orbits of 
conservative systems in \cite{MR2003792,MR1870260}.
We adapt the idea to the shooting templates developed for the present work, 
and this is the purpose of the variable $\alpha$.
}

\cor{An alternative formulation would be to fix the energy and use 
its formula to eliminate one of the variables in the equations of motion, 
or to  work with coordinates in which we can write $M_i$ 
as graphs of some functions and use these functions and
appropriate projections to enforce the constraints. 
Another possibility is to throw away one of the equations in the 
BVP formulation when applying Newton, and to check a-posteriori that 
this equation is satisfied  \cite{BDLM,MR3919451}.
Yet another approach would be to 
directly apply a Newton scheme for the unbalanced BVP, exploiting  
the Moore-Penrose psudoinverse at each step.  While such approaches
lead to excelent numerical methods, one encounters difficulties 
when translating them into computer assisted arguments. 
We believe that the unfolding parameter is a good solution in 
this setting, as it leads to balanced equations with isolated solutions
suitable for verification using fixed point theorems.}} 
\end{remark}

We now
give an example which informs the intuition.

\begin{example}
\label{ex:motivating}(Canonical unfolding.) Consider the ODE in Equation \eqref{eq:ode-1}%
and $E:\mathbb{R}^{d}\rightarrow\mathbb{R}$ satisfying Equation \eqref{eq:E-integral}. Suppose $c\in\mathbb{R}$ is fixed
and denote its associated level set by $M := \left\{ E=c\right\}$ (In this example we have $k=1$ and $E_{1}=E_{2}=E$%
.) Assume there are smooth functions $P_{1},P_{2}$ as in \eqref{eq:Pi-intro}
and that $d_{1}+d_{2}+2=d$. 
We construct a shooting operator for Equation \eqref{eq:problem-1} by choosing $R$ as follows. 
Consider the $\alpha$-parameterized family of ODEs 
\begin{equation*}
x^{\prime}=f(x)+\alpha\nabla E\left( x\right).
\end{equation*}
Let $\phi_{\alpha}\left( x,t\right)$ denote the induced flow and note that 
$\phi_{0}=\phi$ is the flow induced by Equation \eqref{eq:ode-1}. Defining the shooting 
operator by the formula
\begin{equation}
R\left( x,\tau,\alpha\right) :=\phi_{\alpha}\left( x,\tau\right),
\label{eq:R-alpha}
\end{equation}
we see that solving Equation \eqref{eq:problem-1} 
is equivalent to solving Equation \eqref{eq:zero}. 

Observe that $\alpha$ is unfolding for $R$ because $E$ is an
integral of motion for $\phi$ from which it follows that 
\begin{align*}
	\frac{d}{dt}E\left( R_{\tau,\alpha}\left( x\right) \right) & = \frac{d}{dt}
	E(\phi_{\alpha}(x,t)) \\
	& =\nabla E\left( \phi_{\alpha}\left( x,t\right) \right) \cdot\left( f(\phi
	_{\alpha}\left( x,t\right) )+\alpha\nabla E\left( \phi_{\alpha}\left(
	x,t\right) \right) \right) \\
	& =\alpha\left\Vert \nabla E\left( \phi_{\alpha}\left( x,t\right) \right)
	\right\Vert ^{2},
\end{align*}
where $\cdot$ denotes the standard scalar product. Here we have used the fact that
Equation \eqref{eq:E-integral} implies $\nabla E\left( x\right) \cdot f(x)=0$ but also $\nabla E(\phi_\alpha(x,t)) \neq 0$ since $\nabla E$ is assumed to have rank $1$ everywhere.

\end{example}
Returning to the general setup we have the following theorem.
\begin{theorem}
\label{th:single-shooting}Assume that $\alpha$ is an unfolding parameter for 
$R$ and $F$ is defined as in Equation \eqref {eq:F-def-1-shooting}. If 
\begin{equation}
F\left( \bar{x}_{1},\bar{x}_{2},\bar{\tau},\bar{\alpha}\right) =0,
\label{eq:F-zero-in-thm1}
\end{equation}
then $\bar{\alpha}=0$. Moreover, if $DF\left( \bar{x}_{1},\bar{x}_{2},\bar{%
\tau},0\right) $ is an isomorphism, then there exists an open interval $I\subset%
\mathbb{R}$ of $\bar{\tau}$ such that the manifolds $R\left( P_{1}\left( D_{1}\right)
,I,0\right) $ and $P_{2}\left( D_{2}\right) $ intersect transversally in $%
M_{2}$ at $\bar{y}:=P_{2}\left( \bar{x}_{2}\right) $. Specifically, we have the splitting
\begin{equation}
T_{\bar{y}}R\left( P_{1}\left( D_{1}\right) ,I,0\right) \oplus T_{\bar{y}%
}P_{2}\left( D_{2}\right) =T_{\bar{y}}M_{2},
\label{eq:th1-transversality}
\end{equation}
and moreover, $\bar{y}$ is an isolated transverse point.

\end{theorem}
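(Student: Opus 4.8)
The plan is to treat the two assertions in turn. The first — that any zero of $F$ forces $\bar\alpha = 0$ — is immediate: if $F(\bar x_1, \bar x_2, \bar\tau, \bar\alpha) = 0$, then by \eqref{eq:F-def-1-shooting} we have $R_{\bar\tau, \bar\alpha}(P_1(\bar x_1)) = P_2(\bar x_2) \in M_2$, and since $P_1(\bar x_1) \in M_1$, condition (1) of Definition \ref{def:unfolding} yields $\bar\alpha = 0$.

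For the transversality claim I would first read off the block structure of the Jacobian at the solution point $\bar p := (\bar x_1, \bar x_2, \bar\tau, 0)$. Setting $A := \partial_x R\, DP_1$, $b := \partial_\tau R$, $C := \partial_\alpha R$ (all evaluated along the solution) and $B := DP_2(\bar x_2)$, differentiating \eqref{eq:F-def-1-shooting} gives
\[ DF(\bar p) = \big[\, A \;\big|\; -B \;\big|\; b \;\big|\; C \,\big], \]
a $d \times d$ matrix by the dimension condition \eqref{eq:dimensions}, with column blocks of widths $d_1, d_2, 1, k$. The hypothesis that $DF(\bar p)$ is an isomorphism is then simply the statement that these $d$ columns are linearly independent, and this single fact will supply both the immersion properties and the transversality.

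Next I would verify that, after possibly shrinking $D_1$ and choosing a small enough open interval $I \ni \bar\tau$, the set $R(P_1(D_1), I, 0)$ is an embedded submanifold of $M_2$ through $\bar y$. That its image lies in $M_2$ is the conceptual heart of the argument: since $\bar y \in M_2 \subset U_2$ and $U_2$ is open, continuity guarantees $R_{\tau,0}(P_1(x_1)) \in U_2$ for $(x_1,\tau)$ near $(\bar x_1, \bar\tau)$, and then condition (2) of Definition \ref{def:unfolding} (applicable because $P_1(x_1) \in M_1$) forces $R_{\tau,0}(P_1(x_1)) \in M_2$. The map $(x_1,\tau)\mapsto R_{\tau,0}(P_1(x_1))$ has derivative $[A \mid b]$ at $(\bar x_1,\bar\tau)$, which has full rank $d_1+1$ because its columns form a subset of the independent columns of $DF(\bar p)$; hence it is an immersion and locally an embedding, with $T_{\bar y} R(P_1(D_1),I,0) = \operatorname{span}[A\mid b]$. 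Likewise $P_2(D_2)\subset M_2$ is a $d_2$-dimensional submanifold with $T_{\bar y}P_2(D_2) = \operatorname{span} B$.

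Finally I would assemble the splitting \eqref{eq:th1-transversality}. Both tangent spaces lie in $T_{\bar y}M_2$ by the previous step, and their dimensions sum to $(d_1+1)+d_2 = d-k = \dim M_2$ by \eqref{eq:dimensions}. To see the sum is direct, suppose a vector lies in both; writing it as $[A\mid b](a,s)^{\mathsf T}$ and as $B c$ produces the relation $A a + b s - B c = 0$, i.e. $DF(\bar p)(a, c, s, 0)^{\mathsf T} = 0$, whence $(a,c,s) = 0$ by invertibility and the vector is zero. A dimension count then upgrades the direct sum to equality with $T_{\bar y} M_2$. Isolation of $\bar y$ follows from the inverse function theorem: since $DF(\bar p)$ is an isomorphism, $F$ is a local diffeomorphism near $\bar p$, so $\bar p$ is the unique nearby zero of $F$; any intersection point of the two manifolds close to $\bar y$ corresponds to such a zero (with $\alpha = 0$ by the first part), hence coincides with $\bar y$. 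The only genuinely non-mechanical step is the containment $R(P_1(D_1),I,0)\subset M_2$, where the two unfolding conditions together with the openness of $U_2$ do the essential work; everything else is linear algebra extracted from the block form of $DF(\bar p)$.
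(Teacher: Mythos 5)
Your proof is correct and follows essentially the same route as the paper's: read off the block structure $DF=\left[\,\tfrac{\partial R}{\partial x_{1}}DP_{1}\mid -DP_{2}\mid \tfrac{\partial R}{\partial \tau}\mid \tfrac{\partial R}{\partial \alpha}\,\right]$, use the linear independence of its columns to obtain the directness of the sum of tangent spaces, and conclude equality with $T_{\bar{y}}M_{2}$ and isolation from the dimension condition \eqref{eq:dimensions}. If anything, you are more careful than the paper at the one genuinely nontrivial step --- the containment $R\left( P_{1}\left( D_{1}\right) ,I,0\right) \subset M_{2}$, which the paper asserts without comment but which you correctly derive from condition (2) of Definition \ref{def:unfolding} together with the openness of $U_{2}$ --- and your inverse-function-theorem argument for isolation is a valid substitute for the paper's appeal to complementary dimensions.
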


\begin{proof}
Recalling the definition of $F$ in Equation \eqref{eq:F-def-1-shooting}
and the hypothesis of Equation \eqref{eq:F-zero-in-thm1}, 
we have that $\bar{x}=P_{1}\left( 
\bar {x}_{1}\right) \in M_{1}$ and $\bar{y}=P_{2}\left( \bar{x}_{2}\right)
\in M_{2}$.   The fact that $\alpha$ is an unfolding parameter for $R$,
combined with $R\left( \bar{x},\bar{\tau},\bar{\alpha}\right) =\bar{y}$,
implies that $\bar{\alpha}=0$.
Since $F(\bar x_1,\bar x_2, \bar \tau, 0)=0$, we see that $R(P_1(D_1),I,0)$
and $P_2(D_2)$ intersect at $\bar y$.%

Our hypotheses on $P_{1,2}$ and $R$ imply that  
$R\left( P_{1}\left( D_{1}\right) ,I,0\right) $ and $P_{2}\left(
D_{2}\right) $ are submanifolds 
of $M_{2}$ so evidently
\begin{equation*}
T_{\bar{y}}R\left( P_{1}\left( D_{1}\right) ,I,0\right) \oplus T_{\bar{y}%
}P_{2}\left( D_{2}\right) \subset T_{\bar{y}}M_{2}.
\end{equation*}
However, from the assumption in Equation \eqref{eq:dimensions} we have $%
d-k=d_{1}+d_{2}+1$ and therefore it suffices to prove that $T_{\bar{y}}R\left( P_{1}\left(
D_{1}\right) ,I,0\right) \oplus T_{\bar{y}}P_{2}\left( D_{2}\right) $ is $d-k$ dimensional.

Suppose $\setof*{e_{1},\ldots ,e_{d_{1}}}$ is a basis for $\mathbb{R}^{d_{1}}$ and $\setof*{\tilde{e}_{1},\ldots ,\tilde{e}%
_{d_{2}}}$ is a basis for $\mathbb{R}^{d_{2}}$. Define%
\begin{align*}
v_{i}& :=\frac{\partial R}{\partial x_{1}}\left( \bar{x}_{1},\bar{\tau}%
,0\right) DP_{1}\left( \bar{x}_{1}\right) e_{i}\qquad \text{for }i=1,\ldots
,d_{1} \\
v_{i}& :=DP_{2}\left( \bar{x}_{2}\right) \tilde{e}_{i-d_{1}}\qquad \text{for 
}i=d_{1}+1,\ldots ,d_{1}+d_{2} \\
v_{d_{1}+d_{2}+1}& :=\frac{\partial R}{\partial \tau }\left( \bar{x}_{1},%
\bar{\tau},0\right) .
\end{align*}%
After differentiating Equation \eqref{eq:F-def-1-shooting} we obtain the formula
\begin{equation*}
DF=\left( 
\begin{array}{cccc}
\frac{\partial F}{\partial x_{1}} & \frac{\partial F}{\partial x_{2}} & 
\frac{\partial F}{\partial \tau } & \frac{\partial F}{\partial \alpha }%
\end{array}%
\right) =\left( 
\begin{array}{cccc}
\frac{\partial R}{\partial x_{1}}DP_{1} & -DP_{2} & \frac{\partial R}{%
\partial \tau } & \frac{\partial R}{\partial \alpha }%
\end{array}%
\right),
\end{equation*}%
and since $DF$ is an isomorphism at $\left( \bar{x}_{1},\bar{x}_{1},\bar{\tau%
},0\right) $, it follows that the vectors $v_{1},\ldots ,v_{d_{1}+d_{2}+1}$ span a $%
d_{1}+d_{2}+1=d-k$ dimensional space.
Observe that  
\begin{align*}
T_{\bar{y}}R\left( P(D_1) , I, 0\right) & =\text{span}\left(
v_{1},\ldots,v_{d_{1}},v_{d_{1}+d_{2}+1}\right) , \\
T_{\bar{y}} P_{2} \left(D_2 \right) & =\text{span}\left(
v_{d_{1}+1},\ldots,v_{d_{1}+d_{2}}\right),
\end{align*}
proving the claim in Equation \eqref{eq:th1-transversality}. 
Moreover, since 
\begin{equation*}
\dim R\left( P_{1}\left( D_{1}\right) ,I,0\right) +\dim P_{2}\left(
D_{2}\right) =\left( d_{1}+1\right) +d_{2}=d-k=\dim M_{2},
\end{equation*}
 it follows that $\bar{y}$ is an isolated transverse intersection point which concludes
the proof.
\end{proof}

We finish this section by defining an especially simple ``dissipative'' unfolding 
parameter which works in the setting of the PCRTBP.

\begin{example}
\label{ex:dissipative-unfolding}(Dissipative unfolding.) Let $x,y\in \mathbb{%
R}^{2k}$, let $\Omega:\mathbb{R}^{2k}\rightarrow\mathbb{R}$ and $J\in\mathbb{%
R}^{2k\times2k}$ be of the form%
\begin{equation*}
J=\left( 
\begin{array}{cc}
0 & \operatorname{Id}_{k} \\ 
-\operatorname{Id}_{k} & 0%
\end{array}
\right) ,
\end{equation*}
where $\operatorname{Id}_{k}$ is a $k\times k$ identity matrix. Let us consider an ODE of
the form 
\begin{equation*}
\left(x', y'\right) = f\left( x,y\right) :=\left( y,2Jy+\frac {\partial%
}{\partial x}\Omega\left( x\right) \right) .
\end{equation*}
One can check that $E\left( x,y\right) =-\left\Vert y\right\Vert ^{2}+2\Omega\left(
x\right) $ is an integral of motion. Consider the parameterized family of ODEs%
\begin{equation}
\left(x',y'\right) = f_{\alpha}\left( x,y\right) :=f\left( x,y\right)
+\left( 0,\alpha y\right) ,  \label{eq:dissipative-vect-alpha}
\end{equation}
and let $\phi_{\alpha}\left( \left( x,y\right) ,t\right) $ denote the flow
induced by Equation \eqref{eq:dissipative-vect-alpha}. Define 
the shooting operator defined by
\begin{equation}
R\left( \left( x,y\right) ,\tau,\alpha\right) :=\phi_{\alpha}\left( \left(
x,y\right) ,\tau\right).  \label{eq:R-alpha-dissipative}
\end{equation}
As in Example \ref{ex:motivating}, one can check the 
equivalence between Equations \eqref{eq:problem-1} 
and \eqref{eq:zero}. The fact that $\alpha$ is unfolding for $R$ follows as%
\begin{equation*}
	\frac{d}{dt}E\left( \phi_{\alpha}\left( \left( x,y\right) ,t\right) \right)
	=-2\alpha\left\Vert y\right\Vert ^{2}.
\end{equation*}
\end{example}


\subsection{Level set multiple shooting\label{sec:multiple-shooting}}
Consider a sequence of open sets $U_{1},\ldots,U_{n}\subset\mathbb{R}^{d}$
and a sequence of smooth maps 
\begin{equation*}
E_{i}:U_{i}\rightarrow\mathbb{R}^{k}\qquad\text{for }i=1,\ldots,n
\end{equation*}
for which $DE_{i}\left( x\right) $ is of rank $k$ for every $x\in U_{i}$,
for $i=1,\ldots,n$. Let $c_{1},\ldots,c_{n}\in\mathbb{R}^{k}$ be a fixed
sequence with corresponding level sets
\begin{equation*}
M_{i}:=\left\{ x\in U_{i}:E_{i}\left( x\right) =c_{i}\right\} \qquad\text{%
for }i=1,\ldots,n.
\end{equation*}
Let%
\begin{equation*}
R^{i}:U_{i}\times\mathbb{R}\times\mathbb{R}^{k}\rightarrow\mathbb{R}%
^{d}\qquad\text{for }i=1,\ldots,n-1
\end{equation*}
be a sequence of smooth functions which defines a sequence of parameter
dependent maps 
\begin{align*}
R_{\tau,\alpha}^{i} & :U_{i}\rightarrow\mathbb{R}^{d}, \\
R_{\tau,\alpha}^{i}\left( x\right) & :=R^{i}\left( x,\tau,\alpha\right)
,\qquad\text{for }i=1,\ldots,n-1.
\end{align*}
We assume that for each fixed $\tau$ and $\alpha$, each of the maps is a
local diffeomorphism on $\mathbb{R}^d$.

Let $D_{0} \subset \mathbb{R}^{d_{0}}$ and $D_{n} \subset \mathbb{R}^{d_{n}}$ be
open sets, and let 
\begin{equation*}
P_{0}:D_{0}\rightarrow M_0\subset \mathbb{R}^{d},\qquad\qquad
P_{n}:D_{n}\rightarrow M_n\subset \mathbb{R}^{d},
\end{equation*}
be diffeomorphisms onto their image. Assume that%
\begin{equation}
d_{0}+d_{n}+1+k=d  \label{eq:dimensions-multiple-shooting}
\end{equation}%
and consider the function%
\begin{equation*}
\tilde{F}:\mathbb{R}^{nd}\supset D_{0}\times \underset{n-1}{\underbrace{%
\mathbb{R}^{d}\times \ldots \times \mathbb{R}^{d}}}\times D_{n}\times 
\mathbb{R}\times \mathbb{R}^{k}\rightarrow \underset{n}{\underbrace{\mathbb{R%
}^{d}\times \ldots \times \mathbb{R}^{d}}},
\end{equation*}%
defined by the formula
\begin{equation}
\tilde{F}\left( x_{0},\ldots ,x_{n},\tau ,\alpha \right) =\left( 
\begin{array}{r@{\,\,\,\,}l}
P_{0}\left( x_{0}\right)  & -\,\,\,x_{1} \\ 
R_{\tau ,\alpha }^{1}\left( x_{1}\right)  & -\,\,\,x_{2} \\ 
  & \, \vdots \\ 
R_{\tau ,\alpha }^{n-2}\left( x_{n-2}\right)  & -\,\,\,x_{n-1} \\ 
R_{\tau ,\alpha }^{n-1}\left( x_{n-1}\right)  & -\,\,\,P_{n}\left(
x_{n}\right) 
\end{array}%
\right)   \label{eq:multi-prob}
\end{equation}
We now define the following functions 
\begin{align*}
R& :U_{1}\times \mathbb{R}\times \mathbb{R}%
^{k}\rightarrow \mathbb{R}^{d}, \\
F& :  D_{0}\times
D_{n}\times \mathbb{R}\times \mathbb{R}^{k}\rightarrow \mathbb{R}^{d}
\end{align*}%
by the formulas
\begin{align}
R\left( x_{1},\tau ,\alpha \right) & =R_{\tau ,\alpha }\left( x_{1}\right)
:=R_{\tau ,\alpha }^{n-1}\circ \ldots \circ R_{\tau ,\alpha }^{1}\left(
x_{1}\right) ,  \notag \\
F\left( x_{0},x_{n},\tau ,\alpha \right) & :=R_{\tau ,\alpha }\left(
P_{0}\left( x_{0}\right) \right) -P_{n}\left( x_{n}\right) .
\label{eq:F-parallel}
\end{align}

\begin{definition}
We say that $\alpha $ is an unfolding parameter for the sequence $R_{\tau
,\alpha }^{i}$ if it is unfolding for $R_{\tau ,\alpha }=R_{\tau ,\alpha
}^{n-1}\circ \ldots \circ R_{\tau ,\alpha }^{1}.$
\end{definition}
We now formulate the following lemma.
\begin{lemma}
\label{lem:multiple-shooting-2}If $\tilde{F}\left( \bar{x}_{0},\ldots,\bar {x%
}_{n},\bar{\tau},\bar{\alpha}\right) =0$ and $D\tilde{F}\left( \bar{x}%
_{0},\ldots,\bar{x}_{n},\bar{\tau},\bar{\alpha}\right) $ is an isomorphism,
then $F\left( \bar{x}_{0},\bar{x}_{n},\bar{\tau},\bar{\alpha}\right) =0$ and 
$DF\left( \bar{x}_{0},\bar{x}_{n},\bar{\tau},\bar{\alpha}\right) $ is an
isomorphism.
\end{lemma}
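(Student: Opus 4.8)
The plan is to separate the statement into its two assertions: that $F=0$ at the solution, and that $DF$ is an isomorphism there. The first is a direct consequence of the definition of $R_{\tau,\alpha}$ as a composition, while the second is a linear-algebra argument exploiting the block structure of $D\tilde F$.

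First I would establish $F(\bar x_0,\bar x_n,\bar\tau,\bar\alpha)=0$. The hypothesis $\tilde F=0$ reads, block by block, $P_0(\bar x_0)=\bar x_1$, then $R^i_{\bar\tau,\bar\alpha}(\bar x_i)=\bar x_{i+1}$ for $i=1,\ldots,n-2$, and finally $R^{n-1}_{\bar\tau,\bar\alpha}(\bar x_{n-1})=P_n(\bar x_n)$. Chaining these relations and recalling $R_{\bar\tau,\bar\alpha}=R^{n-1}_{\bar\tau,\bar\alpha}\circ\cdots\circ R^1_{\bar\tau,\bar\alpha}$ gives $R_{\bar\tau,\bar\alpha}(P_0(\bar x_0))=P_n(\bar x_n)$, which by \eqref{eq:F-parallel} is precisely $F(\bar x_0,\bar x_n,\bar\tau,\bar\alpha)=0$.

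For the second assertion, I would first note that by \eqref{eq:dimensions-multiple-shooting} the domain of $F$ has dimension $d_0+d_n+1+k=d$, so $DF$ is a square $d\times d$ matrix and it suffices to prove injectivity. Writing $A_i:=\tfrac{\partial R^i}{\partial x}(\bar x_i,\bar\tau,\bar\alpha)$ (invertible, since each $R^i_{\tau,\alpha}$ is a local diffeomorphism), $b_i:=\tfrac{\partial R^i}{\partial\tau}(\bar x_i,\bar\tau,\bar\alpha)$, and $C_i:=\tfrac{\partial R^i}{\partial\alpha}(\bar x_i,\bar\tau,\bar\alpha)$, the matrix $D\tilde F$ acquires a block bidiagonal structure: the first row carries $DP_0$ and $-\id$, the intermediate rows $(i=1,\ldots,n-2)$ carry $A_i$, $-\id$, $b_i$, $C_i$, and the last row carries $A_{n-1}$, $-DP_n$, $b_{n-1}$, $C_{n-1}$. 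The key idea is to build an injective linear map $L$ from the domain of $DF$ into the domain of $D\tilde F$ satisfying $D\tilde F\circ L=(0,\ldots,0,DF)$. Given $(\delta x_0,\delta x_n,\delta\tau,\delta\alpha)$, I define intermediate variations recursively by $\delta x_1:=DP_0(\bar x_0)\delta x_0$ and $\delta x_{i+1}:=A_i\delta x_i+b_i\delta\tau+C_i\delta\alpha$ for $i=1,\ldots,n-2$, and set $L(\delta x_0,\delta x_n,\delta\tau,\delta\alpha):=(\delta x_0,\delta x_1,\ldots,\delta x_{n-1},\delta x_n,\delta\tau,\delta\alpha)$. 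By construction the recursion makes the first $n-1$ block rows of $D\tilde F\circ L$ vanish identically, leaving only the last block row $A_{n-1}\delta x_{n-1}+b_{n-1}\delta\tau+C_{n-1}\delta\alpha-DP_n(\bar x_n)\delta x_n$.

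I expect the step requiring the most care to be verifying that this surviving component coincides exactly with $DF(\delta x_0,\delta x_n,\delta\tau,\delta\alpha)$. Expanding the recursion produces the telescoping product $\Phi:=A_{n-1}\cdots A_1$ multiplying $DP_0\delta x_0$, together with the partial sums $\sum_j A_{n-1}\cdots A_{j+1}b_j$ and $\sum_j A_{n-1}\cdots A_{j+1}C_j$ multiplying $\delta\tau$ and $\delta\alpha$; these must be matched against the chain-rule derivatives of the composition $R_{\tau,\alpha}=R^{n-1}_{\tau,\alpha}\circ\cdots\circ R^1_{\tau,\alpha}$, which is bookkeeping but demands that the order of the matrix products be tracked carefully. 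Once the identity $D\tilde F\circ L=(0,\ldots,0,DF)$ is in hand, injectivity of $DF$ is immediate: $L$ is injective because it preserves the $(\delta x_0,\delta x_n,\delta\tau,\delta\alpha)$ components, so $DF(v)=0$ forces $D\tilde F(L(v))=0$, hence $L(v)=0$ since $D\tilde F$ is an isomorphism, hence $v=0$; as $DF$ is a square matrix, injectivity yields that it is an isomorphism, completing the proof.
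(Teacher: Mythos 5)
Your proposal is correct and follows essentially the same route as the paper: your recursively defined map $L$, applied to the standard basis of $\mathbb{R}^d$, produces exactly the vectors $v^1,\dots,v^d$ that the paper constructs (the paper's recursion $v_k^i = dR^{k-1}v_{k-1}^i + \partial\hat F_k/\partial z_i$ is your $\delta x_k = A_{k-1}\delta x_{k-1}+b_{k-1}\delta\tau+C_{k-1}\delta\alpha$ with initial block $DP_0\,\delta x_0$), and your identity $D\tilde F\circ L=(0,\dots,0,DF)$ is precisely the paper's Equation \eqref{eq:DF-multi-final}. The only cosmetic differences are that the paper first permutes the coordinates (working with $\hat F$ instead of $\tilde F$) and concludes by a rank count on the linearly independent image vectors, whereas you conclude by the equivalent injectivity argument.
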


\begin{proof}
The fact that $F\left( \bar{x}_{0},\bar{x}_{n},\bar{\tau},\bar{\alpha }%
\right) =0$ follows directly from the way $\tilde{F}$ and $F$ are defined in Equations
\eqref{eq:multi-prob} and \eqref{eq:F-parallel} respectively. Before
proving that $DF$ is an isomorphism, we set up some notation. We will write%
\begin{equation*}
dR^{i}:=\frac{\partial R^{i}}{\partial x_{i}}\left( \bar{x}_{i},\bar{\tau },%
\bar{\alpha}\right)\qquad\text{for }i=1,\ldots,n-1.
\end{equation*}
It will be convenient for us to swap the order of the coordinates, so we
define%
\begin{equation}
\hat{F}\left( x_{1},\ldots, x_{n},x_{0},\tau,\alpha\right) :=\tilde{F}\left(
x_{0},x_{1},\ldots, x_{n},\tau,\alpha\right),  \label{eq:F-reordered}
\end{equation}
and write%
\begin{equation*}
\hat{F}=\left( \hat{F}_{1},\ldots,\hat{F}_{n}\right) \qquad\text{where\qquad 
}\hat{F}_{i}:\mathbb{R}^{nd}\rightarrow\mathbb{R}^{d},\text{ for }%
i=1,\ldots, n.
\end{equation*}
Finally, the last notation we introduce is $z\in\mathbb{R}^{d}$ to combine
the coordinates from the domain of $F$ together 
\begin{equation*}
z=\left( z_{1},\ldots,z_{d}\right) =\left( x_{n},x_{0},\tau,\alpha\right) \in%
\mathbb{R}^{d_{n}}\times\mathbb{R}^{d_{0}}\times\mathbb{R}\times \mathbb{R}%
^{k}=\mathbb{R}^{d}.
\end{equation*}
Note that $z$ is also the variable corresponding to the last $d$ coordinates from
the domain of $\hat F$ (see Equation \eqref{eq:F-reordered}). Finally, we remark that all derivatives considered in the argument below are computed at the point $(\bar{x}
_{0},\ldots,\bar{x}_{n},\bar{\tau},\bar{\alpha})$.

 With the above notation we see that%
\begin{equation*}
D\hat{F}=\left( 
\begin{array}{ccccc}
-\operatorname{Id} & 0 & \cdots & 0 & \frac{\partial\hat{F}_{1}}{\partial z} \\ 
dR^{1} & -\operatorname{Id} & \ddots & \vdots & \frac{\partial\hat{F}_{2}}{\partial z} \\ 
0 & \ddots & \ddots & 0 & \vdots \\ 
\vdots & \ddots & dR^{n-2} & -\operatorname{Id} & \frac{\partial\hat{F}_{n-1}}{\partial z}
\\ 
0 & \cdots & 0 & dR^{n-1} & \frac{\partial\hat{F}_{n}}{\partial z}%
\end{array}
\right),
\end{equation*}
and $D\hat{F}$ is an isomorphism since $D\tilde{F}$ is an isomorphism. To see this define a sequence of vectors $v^{1},\ldots,v^{d}\in\mathbb{R}%
^{nd}$ of the form%
\begin{equation*}
v^{i}=\left( 
\begin{array}{c}
v_{1}^{i} \\ 
\vdots \\ 
v_{n}^{i}%
\end{array}
\right) \in\mathbb{R}^{d}\times\ldots\times\mathbb{R}^{d}=\mathbb{R}%
^{nd}\qquad\text{for }i=1,\ldots,d,
\end{equation*}
with $v_{1}^{i}$,$v_{n}^{i}\in\mathbb{R}^{d}$ chosen as 
\begin{equation}
v_{1}^{i}=\frac{\partial\hat{F}_{1}}{\partial z_{i}},\qquad\qquad
v_{n}^{i}=\left( 
\begin{array}{ccccccc}
0 & \cdots & 0 & \overset{i}{1} & 0 & \cdots & 0%
\end{array}
\right) ^{\top},  \label{eq:vin-choice}
\end{equation}
and $v_{2}^{i},\ldots,v_{n-1}^{i}\in\mathbb{R}^{d}$ defined inductively as%
\begin{equation}
v_{k}^{i}=dR^{k-1}v_{k-1}^{i}+\frac{\partial\hat{F}_{k}}{\partial z_{i}}\quad%
\text{for }k=2,\ldots,n-1.  \label{eq:v-ik}
\end{equation}
Note that from the choice of $v_{n}^{i}$ in (\ref{eq:vin-choice}) the
vectors $v^{1},\ldots,v^{d}$ are linearly independent.

By direct computation\footnote{%
From (\ref{eq:multi-prob}) and (\ref{eq:v-ik}) follow the cancellations when
multiplying the vector $v^i$ by $D\hat F$.} it follows that%
\begin{equation}
D\hat{F}v^{i}=\left( 
\begin{array}{c}
0 \\ 
dR^{n-1}v_{n-1}^{i}+\frac{\partial\hat{F}_{n}}{\partial z_{i}}%
\end{array}
\right) \qquad\text{for }i=1,\ldots,d,  \label{eq:proof-shooting-0}
\end{equation}
where the zero is in $\mathbb{R}^{\left( n-1\right) d}$.

Looking at (\ref{eq:multi-prob}), since $\hat{F}_{1},\ldots \hat{F}_{n-1}$
do not depend on $x_{n}$, we see that for $i\in \left\{ 1,\ldots
,d_{n}\right\} $ we have $\frac{\partial \hat{F}_{1}}{\partial z_{i}}=\ldots
=\frac{\partial \hat{F}_{n-1}}{\partial z_{i}}=0,$ so 
\begin{eqnarray}
dR^{n-1}v_{n-1}^{i}+\frac{\partial \hat{F}_{n}}{\partial z_{i}}
&=&dR^{n-1}\left( dR^{n-2}v_{n-2}^{i}+\frac{\partial \hat{F}_{n-1}}{\partial
z_{i}}\right) -\frac{\partial P_{n}}{\partial x_{n,i}}
\label{eq:proof-shooting-1} \\
&=&dR^{n-1}\left( dR^{n-2}v_{n-2}^{i}+0\right) -\frac{\partial P_{n}}{%
\partial x_{n,i}}  \notag \\
&=&\cdots  \notag \\
&=&dR^{n-1}\ldots dR^{1}v_{1}^{i}-\frac{\partial P_{n}}{\partial x_{n,i}} 
\notag \\
&=&dR^{n-1}\ldots dR^{1}\frac{\partial \hat{F}_{1}}{\partial z_{i}}-\frac{%
\partial P_{n}}{\partial x_{n,i}}  \notag \\
&=&-\frac{\partial P_{n}}{\partial x_{n,i}}\qquad \text{for }i=1,\ldots
,d_{n}.  \notag
\end{eqnarray}

Similarly, for $j=i-d_{n}\in \left\{ 1,\ldots ,d_{0}\right\} $ from (\ref%
{eq:multi-prob}) we see that $\frac{\partial \hat{F}_{1}}{\partial z_{i}}=%
\frac{\partial P_{0}}{\partial x_{0,j}}$ and $\frac{\partial \hat{F}_{2}}{%
\partial z_{i}}=\ldots =\frac{\partial \hat{F}_{n}}{\partial z_{i}}=0$, so 
\begin{align}
dR^{n-1}v_{n-1}^{i}+\frac{\partial \hat{F}_{n}}{\partial z_{i}}&
=dR^{n-1}dR^{n-2}\ldots dR^{1}\frac{\partial P_{0}}{\partial x_{0,j}}=\frac{%
\partial \left( R_{\bar{\tau},\bar{\alpha}}\circ P_{0}\right) }{\partial
x_{0,j}}  \label{eq:proof-shooting-2} \\
& \qquad \qquad \qquad \qquad \qquad \left. \text{for }i=d_{n}+1,\ldots
,d_{n}+d_{0}.\right.  \notag
\end{align}

The index $i=d_{n}+d_{0}+1$ corresponds to $\tau $. Similarly to (\ref%
{eq:proof-shooting-1}), by inductively applying the chain rule, it follows
that 
\begin{equation}
dR^{n-1}v_{n-1}^{i}+\frac{\partial \hat{F}_{n}}{\partial z_{i}}=\frac{%
\partial R}{\partial \tau }\qquad \text{for }i=d_{n}+d_{0}+1.
\label{eq:proof-shooting-3}
\end{equation}

Finally, for $j=i-d_{n}-d_{0}-1\in \left\{ 1,\ldots ,k\right\} $, the variable $z_i$ corresponds to $\alpha_j$, and also by
applying the chain rule we obtain that%
\begin{equation}
dR^{n-1}v_{n-1}^{i}+\frac{\partial \hat{F}_{n}}{\partial z_{i}}=\frac{%
\partial R}{\partial \alpha _{j}}\qquad \text{for }i=d_{n}+d_{0}+2,\ldots ,d.
\label{eq:proof-shooting-4}
\end{equation}

Combining Equations \eqref{eq:proof-shooting-0}--\eqref{eq:proof-shooting-4} we see that%
\begin{equation}
\left( 
\begin{array}{ccc}
D\hat{F}v^{1} & \cdots & D\hat{F}v^{d}%
\end{array}%
\right) =\left( 
\begin{array}{cccc}
0 & 0 & 0 & 0 \\ 
-\frac{\partial P_{n}}{\partial x_{n}} & \frac{\partial \left( R_{\bar{\tau},%
\bar{\alpha}}\circ P_{0}\right) }{\partial x_{0}} & \frac{\partial R}{%
\partial \tau } & \frac{\partial R}{\partial \alpha }%
\end{array}%
\right) . \label{eq:DF-multi-final}
\end{equation}%
Since $v^{1},\ldots ,v^{d}$ are linearly independent and since $D\hat{F}$ is
an isomorphism, the rank of the above matrix is $d$. Looking at Equation \eqref{eq:multi-prob} we see that the lower part of the matrix in Equation \eqref{eq:DF-multi-final} corresponds to $DF$ which implies that $DF$ is of rank $d$, hence is an isomorphism.
\end{proof}

We see that we can validate assumptions of Theorem \ref{th:single-shooting}
by setting up a multiple shooting problem (\ref{eq:multi-prob}) and applying
Lemma \ref{lem:multiple-shooting-2}. To do so, one needs to additionally
check whether $\alpha $ is an unfolding parameter for the sequence $R_{\tau
,\alpha }^{i}.$



\section{Regularization of collisions in the PCRTBP} \label{sec:PCRTBP}

\label{sec:CAPS} In this section we formally introduce the equations 
of motion for the PCRTBP as discussed in Section \ref{sec:intro}.
\correction{comment 11}{We stress that all the material in this section,
and in Subsections \ref{sec:reg1} and \ref{sec:regSecondPrimary} are 
completely standard, and we follow the normalization conventions 
as in \cite{theoryOfOrbits}.  That being said, since we use this material to implement
computer assisted proofs, it is important to explicitly state every formula correctly and 
to recall some important but well known facts. The reader who is familiar with 
the CRTBP and its Levi-Civita regularization may want to skim this section 
while jumping ahead to Section \ref{sec:ejectionToCollision}.}

Recall that the problem describes a three body system, where two massive 
primaries are on circular orbits about their center of mass, and
a third massless particle moves in their field.   
The equations of motion for the massless particle 
are expressed in a co-rotating 
frame with the frequency of the primaries. 
Writing Newton's laws in the co-rotating frame leads to 
\begin{align}
x^{\prime \prime }& =2y^{\prime }+\partial _{x}\Omega (x,y),
\label{eq:NewtonPCRTBP} \\
y^{\prime \prime }& =-2x^{\prime }+\partial _{y}\Omega (x,y),  \notag
\end{align}%
where 
\begin{equation*}
\Omega (x,y)=(1-\mu )\left( \frac{r_{1}^{2}}{2}+\frac{1}{r_{1}}\right) +\mu
\left( \frac{r_{2}^{2}}{2}+\frac{1}{r_{2}}\right) ,
\end{equation*}%
\begin{equation*}
r_{1}^2=(x-\mu )^{2}+y^{2},\quad \quad \text{and}\quad \quad
r_{2}^2=(x+1-\mu )^{2}+y^{2}.
\end{equation*}%
Here $x,y$ are the positions of the massless particle on the plane. The $\mu 
$ and $1-\mu $ are the masses of the primaries (normalized so that the
total mass of the system is $1$). 
The rotating frame is oriented so that the
primaries lie on the $x$-axis, with the center of mass at the origin. 
We take $\mu \in (0, \frac{1}{2}]$ so that the large body
is always to the right of the origin.
The larger primary has mass $m_{1}=1-\mu $ and is located at the position
$(\mu ,0)$. Similarly the smaller primary has mass $m_{2}=\mu $ 
and is located at position $(\mu -1,0)$.
The top frame of Figure \ref{fig:PCRTBP_coordinates} provides a schematic
for the positioning of the primaries and the massless particle.

\begin{figure}[t]
\begin{center}
\includegraphics[height=7.5cm]{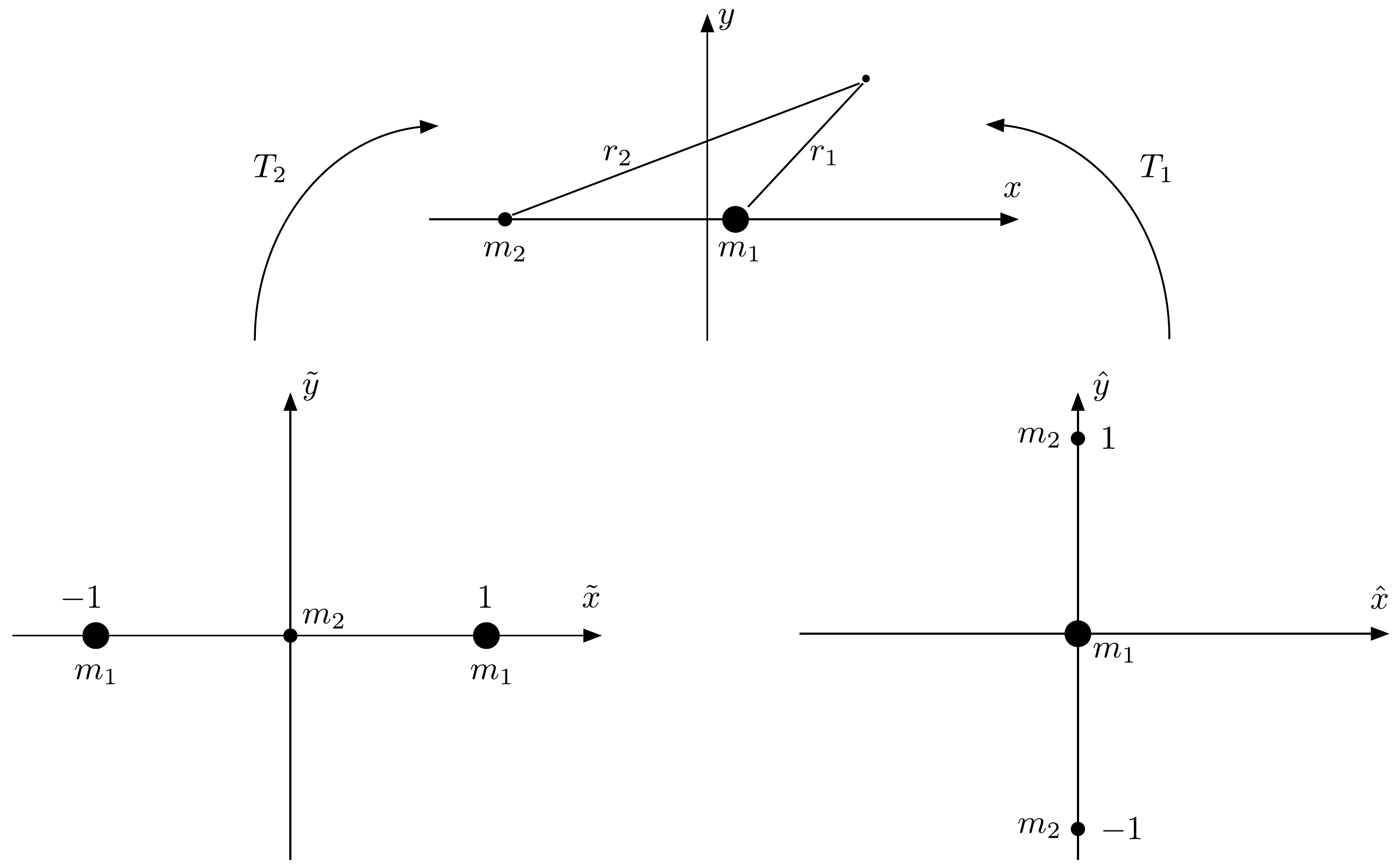}
\end{center}
\caption{Three coordinate frames for the PCRTBP: the center top image
depicts the classical PCRTBP in the rotating frame. The bottom left and
right frames depict the restricted three body problem in Levi-Civita
coordinates: regularization of collisions with $m_{2}$ on the left and with $%
m_{1}$ on the right. Observe that in these coordinates the regularized body
has been moved to the origin. The Levi-Civita transformations $T_{1}$ and $%
T_{2}$ provide double covers of the original system, so that in the
regularized frames there are singularities at the two copies of the
remaining body. }
\label{fig:PCRTBP_coordinates}
\end{figure}

Let $U\subset \mathbb{R}^{4}$ denote the open set 
\begin{equation*}
U:=\left\{ (x,p,y,q)\in \mathbb{R}^{4}\,|\,\left( x,y\right) \not\in \{\left(
\mu ,0\right) ,\left( \mu -1,0\right) \}\right\} .
\end{equation*}%
The vector field $f\colon U\rightarrow \mathbb{R}^{4}$ defined by 
\begin{equation}
f(x,p,y,q):=\left( 
\begin{array}{c}
p \\ 
2q+x-\frac{(1-\mu )\left( x-\mu \right) }{((x-\mu )^{2}+y^{2})^{3/2}}-\frac{%
\mu \left( x+1-\mu \right) }{((x+1-\mu )^{2}+y^{2})^{3/2}} \\ 
q \\ 
-2p+y-\frac{(1-\mu )y}{((x-\mu )^{2}+y^{2})^{3/2}}-\frac{\mu y}{((x+1-\mu
)^{2}+y^{2})^{3/2}}%
\end{array}%
\right)  \label{eq:PCRTBP}
\end{equation}%
is equivalent to the second order system given in \eqref{eq:NewtonPCRTBP}.
Note that 
\begin{equation*}
\Vert f(x,p,y,q)\Vert \rightarrow \infty \quad \quad \text{as either}\quad
\quad (x,y)\rightarrow (\mu ,0)\quad \text{or}\quad (x,y)\rightarrow (\mu
-1,0).
\end{equation*}
Let $\mathbf{x}=(x,p,y,q)$ denote the coordinates in $U$ and 
denote by $\phi (\mathbf{x},t)$ the flow generated by $f$ on $U$.
The system (\ref{eq:PCRTBP}) has an integral of motion $E\colon U\rightarrow 
\mathbb{R}$ given by
\begin{equation}
E\left( \mathbf{x}\right) =-p^{2}-q^{2}+2\Omega (x,y),
\label{eq:JacobiIntegral}
\end{equation}%
which is refered to as the Jacobi integral.

We are interested in orbits with initial conditions $\mathbf{x} \in  U$
with the property that their positions limit to either $m_1 :=(\mu, 0)$ 
or $m_2 := (\mu -1, 0)$ in finite time.  Such orbits, which reach a 
singularity of the vector field
$f$ in finite time, are called collisions.
It has long been known that if we fix our attention to a specific 
level set of the Jacobi integral
for some fixed $c\in \mathbb{R}$, then it is possible to 
make a change of coordinates which ``removes'' or regularizes the 
singularities.  
This idea is reviewed in the next sections.

\subsection{Regularization of collisions with $m_{1}$} \label{sec:reg1}

To regularize a collision with $m_{1}$, define the complex variables $%
z=x+iy, $ and the new ``regularized''
variables $\hat{z}=\hat{x}+i\hat{y},$ related to $z$ by the transformation 
\begin{equation*}
\hat{z}^{2}=z-\mu .
\end{equation*}%
One also rescales time in the
regularized coordinates with the rescaled time $\hat{t}$ related to the original time $t$ by the formula
\begin{equation*}
\frac{dt}{d\hat{t}}=4|\hat{z}|^{2}.
\end{equation*}
Let $U_{1}\in \mathbb{R}^{4}$ denote the open set 
\begin{equation*}
U_{1}=\left\{ \mathbf{\hat{x}}=(\hat{x},\hat{p},\hat{y},\hat{q})\in \mathbb{R%
}^{4} : \left( \hat{x},\hat{y}\right) \notin \left\{ \left( 0,-1\right)
,\left( 0,1\right) \right\} \right\} .
\end{equation*}%
This set will be the domain of the regularized vector field which allows us to 
``flow through'' collisions with $m_1$ but not
with $m_{2}$.

A lengthy calculation (see \cite{theoryOfOrbits}), applying the change of
coordinates and time rescaling just described to the vector field $f$
defined in Equation \eqref{eq:PCRTBP} leads to the regularized Levi-Civita vector
field $f_{1}^{c}\colon U_{1}\rightarrow \mathbb{R}^{4}$ with the ODE $%
\mathbf{\hat{x}}^{\prime }=f_{1}^{c}\left( \mathbf{\hat{x}}\right) $ given by%
\begin{eqnarray} 
\hat{x}^{\prime } &=&\hat{p}, \notag \\
\hat{p}^{\prime } &=&8\left( \hat{x}^{2}+\hat{y}^{2}\right) \hat{q}+12\hat{x}%
(\hat{x}^{2}+\hat{y}^{2})^{2}+16\mu \hat{x}^{3}+4(\mu -c)\hat{x} \notag\\
&&+\frac{8\mu (\hat{x}^{3}-3\hat{x}\hat{y}^{2}+\hat{x})}{((\hat{x}^{2}+\hat{y%
}^{2})^{2}+1+2(\hat{x}^{2}-\hat{y}^{2}))^{3/2}}, \notag\\
\hat{y}^{\prime } &=&\hat{q}, \label{eq:regularizedSystem_m1} \\
\hat{q}^{\prime } &=&-8\left( \hat{x}^{2}+\hat{y}^{2}\right) \hat{p}+12\hat{v%
}\left( \hat{x}^{2}+\hat{y}^{2}\right) ^{2}-16\mu \hat{y}^{3}+4\left( \mu
-c\right) \hat{y} \notag \\
&&+\frac{8\mu (-\hat{y}^{3}+3\hat{x}^{2}\hat{y}+\hat{y})}{((\hat{x}^{2}+\hat{%
y}^{2})^{2}+1+2(\hat{x}^{2}-\hat{y}^{2}))^{3/2}}, \notag
\end{eqnarray}
where the parameter $c$ in the above ODE is $c=E(x,p,y,q)$.
The main observation is that the regularized vector field is well defined at
the origin $\left( \hat{x},\hat{y}\right) =\left( 0,0\right) $, and that the
origin maps to the collision with $m_{1}$ when we invert the Levi-Civita
coordinate transformation.

Let $\psi _{1}^{c}(\mathbf{\hat{x}},\hat{t})$ denote the flow generated by $%
f_{1}^c$. The flow conserves the first integral $E_{1}^{c}\colon
U_{1}\rightarrow \mathbb{R}$ given by 
\begin{eqnarray}
E_{1}^{c}(\mathbf{\hat{x}}) &=&-\hat{q}^{2}-\hat{p}^{2}+4(\hat{x}^{2}+\hat{y}%
^{2})^{3}+8\mu (\hat{x}^{4}-\hat{y}^{4})+4(\mu -c)(\hat{x}^{2}+\hat{y}^{2}) 
\notag \\
&&+8(1-\mu )+8\mu \frac{(\hat{x}^{2}+\hat{y}^{2})}{\sqrt{(\hat{x}^{2}+\hat{y}%
^{2})^{2}+1+2(\hat{x}^{2}-\hat{y}^{2})}}.  \label{eq:reg_P_energy}
\end{eqnarray}
Note that the parameter $c$ appears both in the formulae for $f_{1}^{c}$ and 
$E_{1}^{c}$.  We write $\psi _{1}^{c}$ to stress that the flow depends 
explicitly on the choice of $c$.  We choose 
$c \in \mathbb{R}$ and then, after regularization, have new coordinates 
which allow us to study collisions only in the level set 
	\begin{equation}
		M:=\left\{ \mathbf{x}\in U : E(\mathbf{x})=c\right\} .
		\label{eq:M-level-set-c}
\end{equation}

We define the linear subspace $\mathcal{C}_{1}\subset \mathbb{R}^{4}$ by 
\begin{equation*}
\mathcal{C}_{1}=\left\{ (\hat{x},\hat{p},\hat{y},\hat{q})\in \mathbb{R}%
^{4}\,|\,\hat{x}=\hat{y}=0\right\} ,
\end{equation*}%
The change of coordinates between the two coordinate systems is given by the
transform $T_{1}\colon U_{1}\backslash \mathcal{C}_{1}\rightarrow U$, 
\begin{equation}
\mathbf{x}=T_{1}(\mathbf{\hat{x}}):=\left( 
\begin{array}{c}
\hat{x}^{2}-\hat{y}^{2}+\mu  \\ 
\frac{\hat{x}\hat{p}-\hat{y}\hat{q}}{2(\hat{x}^{2}+\hat{y}^{2})} \\ 
2\hat{x}\hat{y} \\ 
\frac{\hat{y}\hat{p}+\hat{x}\hat{q}}{2(\hat{x}^{2}+\hat{y}^{2})}%
\end{array}%
\right) ,  \label{eq:T1-def}
\end{equation}%
and is a local diffeomorphism on $U_{1}\backslash \mathcal{C}_{1}$. The
following theorem collects results from 
\cite{theoryOfOrbits}, and relates the dynamics of the original and the regularized
systems.  

\begin{theorem}
\label{thm:LeviCivitta}
Let $c$ be the fixed parameter
determining the level set $M$ in Equation \eqref{eq:M-level-set-c}. Assume that $%
\mathbf{x}_{0}\in U$ satisfies $E(\mathbf{x}_{0})=c,$ and assume that $%
\mathbf{\hat{x}}_{0}\in U_{1}\setminus \mathcal{C}_{1}$ is such that $%
\mathbf{x}_{0}=T_{1}\left( \mathbf{\hat{x}}_{0}\right) $. Then the curve%
\begin{equation*}
\gamma \left( s\right) :=T_{1}\left( \psi _{1}^{c}(\hat{\mathbf{x}}%
_{0},s)\right)
\end{equation*}%
parameterizes the following possible solutions of the PCRTBP in $M$:

\begin{enumerate}
\item If for every $\hat{t}\in \lbrack -\hat{T},\hat{T}]$ we have $\psi
_{1}^{c}(\hat{\mathbf{x}}_{0},\hat{t})\in U_{1}\setminus \mathcal{C}_{1}$,
then $\gamma \left( s\right) ,$ for $s\in \lbrack -\hat{T},\hat{T}]$ lies on
a trajectory of the PCRTBP which avoids collisions. Moreover, the time $t$
in the original coordinates that corresponds to the time $\hat{t}\in \lbrack
-\hat{T},\hat{T}]$ in the regularised coordinates is recovered by the
integral 
\begin{equation}
t=4\int_{0}^{\hat{t}}\left( \hat{x}(s)^{2}+\hat{y}(s)^{2}\right) ds,
\label{eq:time-recovery}
\end{equation}%
i.e. 
\begin{equation*}
\phi \left( t,\mathbf{x}_{0}\right) =T_{1}\left( \psi _{1}^{c}(\hat{\mathbf{x%
}}_{0},\hat{t})\right) .
\end{equation*}

\item If for $\hat{T}>0$, for every $\hat{t}\in \lbrack 0,\hat{T})$ we have $%
\psi _{1}^{c}(\hat{\mathbf{x}}_{0},\hat{t})\in U_{1}\setminus \mathcal{C}%
_{1} $ and $\psi _{1}^{c}(\hat{\mathbf{x}}_{0},\hat{T})\in \mathcal{C}_{1}$,
then in the original coordinates the trajectory starting from $\mathbf{x}%
_{0} $ reaches the collision with $m_{1}$ at time $T>0$ given by%
\begin{equation}
T=4\int_{0}^{\hat{T}}\left( \hat{x}(s)^{2}+\hat{y}(s)^{2}\right) \,ds.
\label{eq:time-to-collision}
\end{equation}

\item If for $\hat{T}<0$, for every $\hat{t}\in (\hat{T},0]$ we have $\psi
_{1}^{c}(\hat{\mathbf{x}}_{0},\hat{t})\in U_{1}\setminus \mathcal{C}_{1}$
and $\psi _{1}^{c}(\hat{\mathbf{x}}_{0},\hat{T})\in \mathcal{C}_{1}$, then
in the original coordinates the backward trajectory starting from $\mathbf{x}%
_{0}$ reaches the collision with $m_{1}$ at time $T<0$ expressed in Equation \eqref{eq:time-to-collision}.
\end{enumerate}
\end{theorem}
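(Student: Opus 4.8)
The engine of the proof is the conjugacy relation tying the regularized field $f_1^c$ to $f$ through $T_1$ and the time rescaling $dt/d\hat{t}=4(\hat{x}^2+\hat{y}^2)$. The plan is to first record the identity
\begin{equation*}
DT_1(\mathbf{\hat{x}})\, f_1^c(\mathbf{\hat{x}}) = 4(\hat{x}^2+\hat{y}^2)\, f\bigl(T_1(\mathbf{\hat{x}})\bigr),
\end{equation*}
valid for every $\mathbf{\hat{x}}\in U_1\setminus\mathcal{C}_1$ lying on the level set of $E_1^c$ corresponding to $E=c$ (equivalently $E(T_1(\mathbf{\hat{x}}))=c$). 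This is exactly the content of the lengthy calculation of \cite{theoryOfOrbits} that produces \eqref{eq:regularizedSystem_m1}: the field $f_1^c$ is by construction the $T_1$-pullback of \eqref{eq:PCRTBP}, rescaled by $4(\hat{x}^2+\hat{y}^2)$ and simplified using the energy relation $E=c$. I would either cite this derivation or verify it directly by differentiating the explicit formula \eqref{eq:T1-def} and substituting, noting that the substitution is legitimate only on the prescribed energy level, which is precisely where the parameter $c$ enters $f_1^c$. Since $\mathbf{x}_0=T_1(\mathbf{\hat{x}}_0)$ with $E(\mathbf{x}_0)=c$ and $\psi_1^c$ preserves $E_1^c$, the entire regularized orbit stays on this level set, so the identity holds all along it.

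For item (1) I would introduce the reparameterization $t(\hat{t}):=4\int_0^{\hat{t}}(\hat{x}(s)^2+\hat{y}(s)^2)\,ds$ built from $\mathbf{\hat{x}}(s)=\psi_1^c(\mathbf{\hat{x}}_0,s)$. Away from $\mathcal{C}_1$ the integrand is strictly positive and continuous, so on $[-\hat{T},\hat{T}]$ the map $\hat{t}\mapsto t$ is a strictly increasing $C^1$ diffeomorphism onto its image; let $t\mapsto\hat{t}(t)$ be its inverse. Setting $\Gamma(t):=\gamma(\hat{t}(t))$, the chain rule together with $dt/d\hat{t}=4(\hat{x}^2+\hat{y}^2)$ and the conjugacy identity gives $\Gamma'(t)=f(\Gamma(t))$, while $\Gamma(0)=T_1(\mathbf{\hat{x}}_0)=\mathbf{x}_0$. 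Uniqueness of solutions of the smooth system \eqref{eq:PCRTBP} then forces $\Gamma(t)=\phi(\mathbf{x}_0,t)$, which is precisely the recovery formula \eqref{eq:time-recovery} and shows $\gamma$ traces a genuine PCRTBP trajectory. That this trajectory avoids collision and lies in $M$ is then automatic: $T_1$ maps $U_1\setminus\mathcal{C}_1$ into $U$ (collision with $m_1$ corresponds to the deleted set $\mathcal{C}_1$, and collision with $m_2$ to the points $(0,\pm1)$ absent from $U_1$), and $E\circ\phi\equiv E(\mathbf{x}_0)=c$.

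For items (2) and (3) I would run item (1) on the subintervals $[0,\hat{T}-\varepsilon]$ (respectively $[\hat{T}+\varepsilon,0]$) and pass to the limit. As $\hat{t}\to\hat{T}^-$ the regularized orbit tends to a point of $\mathcal{C}_1$, i.e. $(\hat{x},\hat{y})\to(0,0)$; reading off \eqref{eq:T1-def}, the position converges to $(\hat{x}^2-\hat{y}^2+\mu,\,2\hat{x}\hat{y})\to(\mu,0)=m_1$, so the original trajectory collides with $m_1$. The collision time is $T=\lim_{\hat{t}\to\hat{T}^-}t(\hat{t})=4\int_0^{\hat{T}}(\hat{x}^2+\hat{y}^2)\,ds$, and since the integrand is continuous, hence bounded, on the compact interval $[0,\hat{T}]$ (the regularized flow extends smoothly up to $\hat{t}=\hat{T}$), this integral converges to a finite $T>0$, giving \eqref{eq:time-to-collision}. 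Item (3) is identical with time reversed.

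The routine-but-laborious step, and the only real obstacle, is verifying the conjugacy identity of the first paragraph from the explicit formulas; everything after that is a soft reparameterization-plus-uniqueness argument. The one point demanding genuine care is the collision limit: one must confirm both that the image position actually reaches $m_1$ (not merely that the regularized orbit reaches $\mathcal{C}_1$) and that the reparameterizing integral stays finite, so that the collision occurs in finite synodic time even though $f_1^c$ is perfectly regular there.
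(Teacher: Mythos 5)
Your argument is correct. Bear in mind that the paper does not actually prove this theorem: it is stated explicitly as a collection of standard facts, with the argument deferred to the classical Levi-Civita regularization theory in \cite{theoryOfOrbits}. Your proposal reconstructs exactly that standard argument --- the conjugacy identity $DT_1(\mathbf{\hat{x}})\,f_1^c(\mathbf{\hat{x}}) = 4(\hat{x}^2+\hat{y}^2)\,f(T_1(\mathbf{\hat{x}}))$ on the level set $\{E_1^c=0\}$, the strictly monotone time reparameterization combined with uniqueness of solutions of \eqref{eq:PCRTBP} for case (1), and the finite-time limit onto $\mathcal{C}_1$ for cases (2) and (3) --- and it handles the two points that genuinely require care: the conjugacy holds only on the energy level, which the regularized orbit never leaves because $E_1^c$ is conserved and Lemma \ref{lem:energies-cond} identifies $\{E_1^c=0\}$ with $\{E\circ T_1=c\}$; and the collision time is finite because the regularized flow extends smoothly up to $\hat{T}$, so the reparameterizing integrand is bounded on the compact interval.
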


Orbits satisfying condition 2 from Theorem \ref%
{thm:LeviCivitta} are collision orbits, while orbits satisfying condition
3 from Theorem \ref{thm:LeviCivitta} are called ejection orbits. From Theorem \ref{thm:LeviCivitta} we see that for regularized orbits $\psi
_{1}^{c}\left( \mathbf{\hat{x}}_{0},\hat{t}\right) $ to have a physical
meaning in the original coordinates we need to choose $c=E\left( T_{1}\left( 
\mathbf{\hat{x}}_{0}\right) \right)$ for the regularization energy. The following lemma, whose proof is a standard calculation (see \cite{theoryOfOrbits}), addresses this
choice.

\begin{lemma}
\label{lem:energies-cond}For every $\mathbf{\hat{x}}\in U_{1}$, we have
\begin{equation}
E\left( T_{1}\left( \mathbf{\hat{x}}\right) \right) =c\qquad \text{if and only if} \qquad
E_{1}^{c}\left( \mathbf{\hat{x}}\right) =0.  \label{eq:energies-cond-m1}
\end{equation}
\end{lemma}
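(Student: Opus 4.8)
The plan is to establish the single explicit algebraic identity
\begin{equation*}
E_1^c(\mathbf{\hat{x}}) = 4\left(\hat{x}^2 + \hat{y}^2\right)\left(E\left(T_1(\mathbf{\hat{x}})\right) - c\right),
\end{equation*}
valid for every $\mathbf{\hat{x}} \in U_1 \setminus \mathcal{C}_1$. Once this is in hand the lemma is immediate: on $U_1 \setminus \mathcal{C}_1$ one has $\hat{x}^2 + \hat{y}^2 > 0$, so the prefactor $4(\hat{x}^2 + \hat{y}^2)$ never vanishes, and therefore $E_1^c(\mathbf{\hat{x}}) = 0$ if and only if $E(T_1(\mathbf{\hat{x}})) = c$. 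Note that the collision subspace $\mathcal{C}_1$ must be excluded, since $T_1$ is only defined off $\mathcal{C}_1$; this is exactly where $E_1^c$ reduces to $8(1-\mu) \neq 0$.

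To prove the identity I would substitute the transformation $T_1$ of Equation \eqref{eq:T1-def} directly into the Jacobi integral $E(\mathbf{x}) = -p^2 - q^2 + 2\Omega(x,y)$ and simplify, abbreviating $\rho^2 := \hat{x}^2 + \hat{y}^2$. Three sub-computations carry the argument. First, for the momenta the cross terms cancel and a Pythagorean identity gives
\begin{equation*}
p^2 + q^2 = \frac{(\hat{x}\hat{p} - \hat{y}\hat{q})^2 + (\hat{y}\hat{p} + \hat{x}\hat{q})^2}{4\rho^4} = \frac{\hat{p}^2 + \hat{q}^2}{4\rho^2}.
\end{equation*}
Second, the defining feature of Levi-Civita coordinates ($\hat{z}^2 = z - \mu$, hence $|z - \mu| = |\hat{z}|^2$) yields $r_1^2 = (x - \mu)^2 + y^2 = (\hat{x}^2 + \hat{y}^2)^2 = \rho^4$, so $r_1 = \rho^2$. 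Third, expanding $r_2^2 = (x + 1 - \mu)^2 + y^2$ with $x = \hat{x}^2 - \hat{y}^2 + \mu$ and $y = 2\hat{x}\hat{y}$ produces $r_2^2 = \rho^4 + 1 + 2(\hat{x}^2 - \hat{y}^2)$, which is precisely the radicand appearing in the formula \eqref{eq:reg_P_energy} for $E_1^c$.

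With these in place, I would assemble $2\Omega = (1-\mu)(r_1^2 + 2/r_1) + \mu(r_2^2 + 2/r_2)$, form $E(T_1(\mathbf{\hat{x}})) = -p^2 - q^2 + 2\Omega$, and clear denominators by multiplying through by $4\rho^2$. Using $\rho^2(\hat{x}^2 - \hat{y}^2) = \hat{x}^4 - \hat{y}^4$ together with $r_1 = \rho^2$, every resulting term matches a term of $E_1^c$ in \eqref{eq:reg_P_energy} except for the coefficient of $\rho^2 = \hat{x}^2 + \hat{y}^2$: the substitution produces $4\mu(\hat{x}^2 + \hat{y}^2)$, whereas $E_1^c$ carries $4(\mu - c)(\hat{x}^2 + \hat{y}^2)$. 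Their difference is exactly $4\rho^2 c$, which rearranges to the claimed identity. The only ``obstacle'' here is careful bookkeeping of the several algebraic terms; there is no conceptual difficulty, which is why the result (following \cite{theoryOfOrbits}) is labeled a standard calculation.
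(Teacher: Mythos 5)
Your proof is correct and is essentially the argument the paper intends: the paper gives no proof at all, deferring to a ``standard calculation'' in \cite{theoryOfOrbits}, and your explicit identity $E_1^c(\mathbf{\hat{x}}) = 4\left(\hat{x}^2+\hat{y}^2\right)\left(E\left(T_1(\mathbf{\hat{x}})\right)-c\right)$ on $U_1\setminus\mathcal{C}_1$ is exactly that calculation carried out (the three sub-computations check out, including the cancellation $4(1-\mu)\rho^6+4\mu\rho^6=4\rho^6$ and the grouping $4\mu\rho^2-4c\rho^2=4(\mu-c)\rho^2$), together with the correct observation that the prefactor is nonvanishing off $\mathcal{C}_1$, where $T_1$ is defined. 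One small slip in your side remark: on $\mathcal{C}_1$ the regularized energy does not reduce to $8(1-\mu)$ but to $-\hat{p}^2-\hat{q}^2+8(1-\mu)$ (whose zero set is precisely the collision circle $\hat{p}^2+\hat{q}^2=8(1-\mu)$ of Corollary \ref{cor:collisions-m1}); this does not affect your argument, since the lemma's equivalence is only meaningful where $T_1$ is defined.
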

The following corollary of Lemma \ref{lem:energies-cond} is a consequence of evaluating the expression 
for the energy at zero when the positions are zero.
\begin{corollary}
\label{cor:collisions-m1}If we consider $\mathbf{\hat{x}}=\left( \hat{x},%
\hat{p},\hat{y},\hat{q}\right) $ with $\hat{x}=\hat{y}=0$, which corresponds
to a collision with $m_{1}$, then from $E_{1}^{c}\left( \mathbf{\hat{x}}%
\right) =0$ we see that for a trajectory $\psi _{1}^{c}\left( \mathbf{\hat{x}%
},\hat{t}\right) $ starting from a collision point $\mathbf{\hat{x}}=\left(
0,\hat{p},0,\hat{q}\right) $ to have a physical meaning in the original
coordinates it is necessary and sufficient that 
\begin{equation}
\hat{q}^{2}+\hat{p}^{2}=8(1-\mu ).  \label{eq:collision-m1}
\end{equation}
\end{corollary}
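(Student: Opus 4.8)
The plan is to reduce the physical-meaning requirement to the single scalar equation $E_1^c(\hat{\mathbf{x}}) = 0$ via Lemma \ref{lem:energies-cond}, and then to simply evaluate the explicit formula \eqref{eq:reg_P_energy} for $E_1^c$ at a collision point, where $\hat{x} = \hat{y} = 0$. There is no real machinery to set up here; the corollary is essentially a substitution.

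First I would recall from Theorem \ref{thm:LeviCivitta} that a regularized trajectory $\psi_1^c(\hat{\mathbf{x}}, \hat{t})$ corresponds to a genuine PCRTBP orbit sitting inside the level set $M$ exactly when its base point has the correct Jacobi energy, i.e.\ when $E(T_1(\hat{\mathbf{x}})) = c$. Lemma \ref{lem:energies-cond} translates this condition into the regularized framework, asserting that $E(T_1(\hat{\mathbf{x}})) = c$ holds if and only if $E_1^c(\hat{\mathbf{x}}) = 0$. Thus the entire content of the corollary is to read off what the equation $E_1^c(\hat{\mathbf{x}}) = 0$ becomes once we impose $\hat{x} = \hat{y} = 0$, which is precisely the collision configuration with $m_1$.

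Next I would substitute $\hat{x} = \hat{y} = 0$ into \eqref{eq:reg_P_energy}. Every summand carrying a factor of $\hat{x}^2 + \hat{y}^2$ (or of $\hat{x}^4 - \hat{y}^4$, or a higher power) vanishes; in particular the final fractional term is harmless, since its numerator $\hat{x}^2 + \hat{y}^2$ goes to $0$ while its denominator $\sqrt{(\hat{x}^2+\hat{y}^2)^2 + 1 + 2(\hat{x}^2 - \hat{y}^2)}$ reduces to $\sqrt{1} = 1$. What survives is simply $E_1^c(0,\hat{p},0,\hat{q}) = -\hat{p}^2 - \hat{q}^2 + 8(1-\mu)$. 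Setting this equal to zero and rearranging yields exactly \eqref{eq:collision-m1}.

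There is essentially no obstacle in this argument. The only point deserving a moment's attention is verifying that the last summand of \eqref{eq:reg_P_energy} genuinely vanishes at the origin rather than producing an indeterminate form; this is immediate because the denominator is bounded away from zero there (equal to $1$), which is precisely the removable-singularity feature that makes the Levi-Civita vector field $f_1^c$ well defined at the collision point in the first place.
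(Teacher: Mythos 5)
Your proposal is correct and follows exactly the paper's route: the paper itself introduces the corollary as "a consequence of evaluating the expression for the energy at zero when the positions are zero," i.e.\ it combines Lemma \ref{lem:energies-cond} with the substitution $\hat{x}=\hat{y}=0$ in Equation \eqref{eq:reg_P_energy}, which is precisely what you do. Your extra remark that the fractional term vanishes cleanly (denominator equal to $1$) is a fine point of care, but it does not change the argument.
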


\begin{definition}
\label{def:ejection-collision-manifolds}We refer to 
\begin{equation*}
\left\{ \psi _{1}^{c}\left( \mathbf{\hat{x}},\hat{t}\right) :\hat{q}^{2}+%
\hat{p}^{2}=8(1-\mu ),\,\hat{t}\geq 0\text{ and }\psi _{1}^{c}(\mathbf{\hat{x%
}},[0,\hat{t}])\cap \mathcal{C}_{1}=\emptyset \right\}
\end{equation*}%
as the ejection manifold from $m_{1},$ and%
\begin{equation*}
\left\{ \psi _{1}^{c}\left( \mathbf{\hat{x}},\hat{t}\right) :\hat{q}^{2}+%
\hat{p}^{2}=8(1-\mu ),\,\hat{t}\leq 0\text{ and }\psi _{1}^{c}(\mathbf{\hat{x%
}},[\hat{t},0])\cap \mathcal{C}_{1}=\emptyset \right\}
\end{equation*}%
 as the collision manifold to $m_{1}$.
\end{definition}

Note that both the collision and the ejection manifolds depend on the
choice of $c$. That is, we have a family of collision/ejection manifolds, 
parameterized by the Jacobi constant $c$. For a
fixed $c$ the collision manifold, when viewed in the original coordinates,
consists of points with energy $c$, whose forward trajectory reaches the
collision with $m_{1}$. Similarly, for fixed $c$, the ejection manifold, in
the original coordinates, consists of points with energy $c$ whose backward
trajectory collide with $m_{1}$. 
Thus, the circle defined in Corollary \ref{cor:collisions-m1} is a sort 
of ``fundamental domain'' for ejections/collisions to $m_1$ with energy $c$.

\subsection{Regularization of collisions with $m_{2}$} \label{sec:regSecondPrimary}

To regularize at the second primary, we define the coordinates $\tilde{z}=%
\tilde{x}+i\tilde{y}$ through $\tilde{z}^{2}=z+1-\mu $ and consider the time
rescaling $dt/d\tilde{t}=4|\tilde{z}|^{2}$.
As in the previous section, define 
\begin{eqnarray*}
U_{2}&:= &\left\{ \mathbf{\tilde{x}}=(\tilde{x},\tilde{p},\tilde{y},\tilde{q}%
)\in \mathbb{R}^{4}\,|\,\left( \tilde{x},\tilde{y}\right) \notin \left\{
\left( -1,0\right) ,\left( 1,0\right) \right\} \right\} , \\
\mathcal{C}_{2}&:= &\left\{ \mathbf{\tilde{x}}=(\tilde{x},\tilde{p},\tilde{y}%
,\tilde{q})\in \mathbb{R}^{4}\,|\,\tilde{x}=\tilde{y}=0\right\},
\end{eqnarray*}%
so that  $U_{2}$ consists of points in the regularized coordinates which do
not collide with $m_{1}$, and $\mathcal{C}_{2}$ consists of
points which collide with $m_{2}$.

The regularized Levi-Civita vector field $f_{2}^{c}:U_{2}\rightarrow \mathbb{%
R}^{4}$ with the ODE $\mathbf{\tilde{x}}^{\prime }=f_{2}^{c}\left( \mathbf{%
\tilde{x}}\right) $ is of the form (see \cite{theoryOfOrbits})%
\begin{eqnarray}
\tilde{x}^{\prime } &=&\tilde{p},  \label{eq:reg_S_field} \notag \\
\tilde{p}^{\prime } &=&8\left( \tilde{x}^{2}+\tilde{y}^{2}\right) \tilde{q}%
+12\tilde{x}(\tilde{x}^{2}+\tilde{y}^{2})^{2}-16(1-\mu )\tilde{x}%
^{3}+4\left( (1-\mu )-c\right) \tilde{x}  \notag \\
&&+\frac{8(1-\mu )\left( -\tilde{x}^{3}+3\tilde{x}\tilde{y}^{2}+\tilde{x}%
\right) }{((\tilde{x}^{2}+\tilde{y}^{2})^{2}+1+2(\tilde{y}^{2}-\tilde{x}%
^{2}))^{3/2}},  \notag \\
\tilde{y}^{\prime } &=&\tilde{q},   \label{eq:regularizedSystem_m2} \\
\tilde{q}^{\prime } &=&-8\left( \tilde{u}^{2}+\tilde{y}^{2}\right) \tilde{p}%
+12\tilde{y}(\tilde{x}^{2}+\tilde{y}^{2})^{2}+16(1-\mu )\tilde{y}%
^{3}+4\left( (1-\mu )-c\right) \tilde{y}  \notag \\
&&+\frac{8(1-\mu )\left( \tilde{y}^{3}-3\tilde{x}^{2}\tilde{y}+\tilde{y}%
\right) }{((\tilde{x}^{2}+\tilde{y}^{2})^{2}+1+2(\tilde{y}^{2}-\tilde{x}%
^{2}))^{3/2}}, \notag 
\end{eqnarray}%
with the integral of motion 
\begin{align}
E_{2}^{c}\left( \mathbf{\tilde{x}}\right) & =-\tilde{p}^{2}-\tilde{q}^{2}+4(%
\tilde{x}^{2}+\tilde{y}^{2})^{3}+8(1-\mu )(\tilde{y}^{4}-\tilde{x}%
^{4})+4\left( (1-\mu )-c\right) (\tilde{x}^{2}+\tilde{y}^{2})  \notag \\
& \quad +8(1-\mu )\frac{\tilde{x}^{2}+\tilde{y}^{2}}{\sqrt{(\tilde{x}^{2}+%
\tilde{y}^{2})^{2}+1+2(\tilde{y}^{2}-\tilde{x}^{2})}}+8\mu .  \label{eq:E2}
\end{align}

We write $\psi_2^c( \mathbf{\tilde x},\tilde t)$ for the flow induced
by (\ref{eq:reg_S_field}).

The change of coordinates from the regularized coordinates $\mathbf{\tilde{x}%
}$ to the original coordinates $\mathbf{x}$ is given by $T_{2}:U_{2}%
\setminus \mathcal{C}_{2}\rightarrow \mathbb{R}^{4}$\textbf{\ }of the form%
\begin{equation}
\mathbf{x}=T_{2}\left( \mathbf{\tilde{x}}\right) =\left( 
\begin{array}{c}
\tilde{x}^{2}-\tilde{y}^{2}+\mu-1 \\ 
\frac{\tilde{x}\tilde{p}-\tilde{y}\tilde{q}}{2(\tilde{x}^{2}+\tilde{y}^{2})}
\\ 
2\tilde{x}\tilde{y} \\ 
\frac{\tilde{y}\tilde{p}+\tilde{x}\tilde{q}}{2(\tilde{x}^{2}+\tilde{y}^{2})}%
\end{array}%
\right) .  \label{eq:T2-def}
\end{equation}

A theorem analogous to Theorem \ref{thm:LeviCivitta} characterizes solution
curves in the two coordinate systems and the collisions with the second
primary $m_{2}$. Also, analogously to Lemma \ref{lem:energies-cond} and
Corollary \ref{cor:collisions-m1} for every $\mathbf{\ \tilde{x}}\in U_{2}$
we have%
\begin{equation}
E\left( T_{2}\left( \mathbf{\tilde{x}}\right) \right) =c\qquad \text{if and only if} \qquad
E_{2}^{c}\left( \mathbf{\tilde{x}}\right) =0,  \label{eq:energies-cond-m2}
\end{equation}%
and a trajectory $\psi _{2}^{c}\left( \mathbf{\tilde{x}},\tilde{t}\right) $
starting from a collision point $\mathbf{\tilde{x}}=\left( 0,\tilde{p},0,%
\tilde{q}\right) $ with $m_{2}$ has physical meaning in the original
coordinates if and only if 
\begin{equation}
\tilde{q}^{2}+\tilde{p}^{2}=8\mu .  \label{eq:collision-m2}
\end{equation}

We introduce the notions of the ejection and collision manifolds for $m_{2}$
analogously to Definition \ref{def:ejection-collision-manifolds}.


\begin{figure}[t!]
\begin{center}
\includegraphics[height=6.0cm]{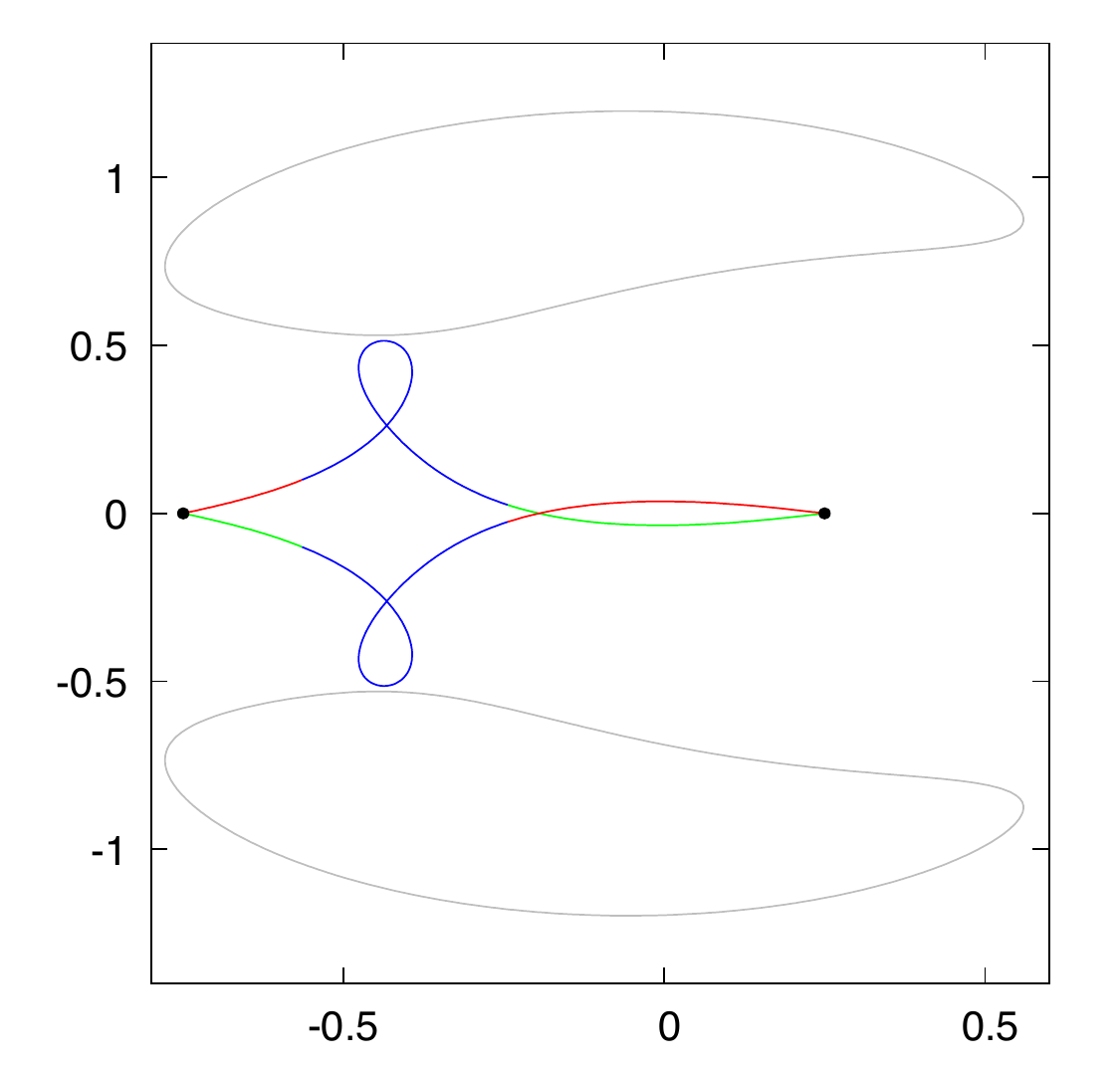} %
\end{center}
\caption{Ejection collision orbits in the PCRTBP when $\mu = 1/4$ 
and $C = 3.2$.  The grey curves at the top and bottom of the figure illustrate 
the zero velocity curves, i.e. the boundaries of the prohibited Hill's regions, for 
this value of $C$.    The black dots at $x = \mu$ and $x = -1+\mu$ depict
the locations of the primary bodies. The curves in the middle of the figure 
represent two ejection-collision orbits: $m_2$ to $m_1$ (bottom) and 
$m_1$ to $m_2$ (top). (Recall that $m_2$ is on the left and $m_1$ on the right; 
compare with Figure \ref{fig:PCRTBP_coordinates}.)  These orbits are computed by numerically
locating an approximate zero of the function defined in Equation \eqref{eq:collisionOperator}.
In setting up the BVP we choose to spend $s = 0.35$ time units in each of the 
regularized coordinate systems (red and green orbit segments)
but this transforms to unequal amounts of time in the original/synodic 
coordinates.  
The blue portion of the orbit is in the 
original coordinates.  The curves
are plotted by changing all points back to the original coordinates.
The entire ejection-collision takes about $2.427$ time units in the original/synodic
coordinates.}
\label{fig:ejectionCollisions}
\end{figure}

\section{Ejection-collision orbits} \label{sec:ejectionToCollision}

We now define a level set multiple shooting operator whose zeros
correspond to transverse ejection-collision orbits from the body $m_{k}$ 
to the body $m_{l}$ for $k,l\in\left\{ 1,2\right\}$ in the PCRTBP.
\correction{comment 12}{
Two such orbits in the PCRTBP are illustrated in Figure \ref{fig:ejectionCollisions}.
}

Note that the PCRTBP has the form discussed in Example 
\ref{ex:dissipative-unfolding}, so that a dissipative unfolding
is given by the one parameter family of ODEs 
\begin{equation}
f_{\alpha}(x,p,y,q)=f(x,p,y,q)+\alpha\left( 0,p,0,q\right),
\label{eq:unfoldedPCRTBP}
\end{equation}
where $f$ is as defined in Equation \eqref{eq:PCRTBP}.
Let $\phi_{\alpha}(\mathbf{x},t)$ denote the flow generated by the
the vector field of Equation \eqref{eq:unfoldedPCRTBP}. 
 For $c\in\mathbb{R}$ consider the fixed energy level
set $M$. Then $\alpha$ is an unfolding parameter for 
the mapping 
\begin{equation*}
R_{\tau,\alpha}\left( \mathbf{x}\right) =\phi_{\alpha}(\mathbf{x},\tau)
\end{equation*}
from $M$ to $M$. (Here 
$R_{\tau,\alpha}:\mathbb{R}^{4}\rightarrow \mathbb{R}^{4}$ for fixed $\alpha,\tau\in\mathbb{R}$.)

Define the functions $P_{i} \colon \mathbb{R} \to \mathbb{R}^4$ for $i = 1,2$ by 
\begin{equation}
P_{i}\left( \theta \right) :=\left\{ 
\begin{array}{lll}
(0,\sqrt{8\left( 1-\mu \right) }\cos \left( \theta \right) ,0,\sqrt{8\left(
1-\mu \right) }\sin \theta ) &  & \text{for }i=1,\medskip \\ 
(0,\sqrt{8\mu }\cos \left( \theta \right) ,0,\sqrt{8\mu }\sin \theta ) &  & 
\text{for }i=2.%
\end{array}%
\right.  \label{eq:collisions-par-Pi}
\end{equation}%
By Equations \eqref{eq:collision-m1} and \eqref{eq:collision-m2} the function $%
P_{i}\left( \theta \right) $ parameterizes the collision set for the primary $%
m_{i}$, with $i=1,2$.
Fix $k,l\in \left\{ 1,2\right\} $ and consider level sets $%
M_{1},\ldots ,M_{6}\subset \mathbb{R}^{4}$ defined by %
\begin{align*}
M_{1}& =M_{2}=\left\{ E_{k}^{c}=0\right\} , \\
M_{3}& =M_{4}=\left\{ E=c\right\} , \\
M_{5}& =M_{6}=\left\{ E_{l}^{c}=0\right\}.
\end{align*}%
Choose $s>0$, 
and for $i = 1,2$ recall the definition of the coordinate transformations 
$T_{i} \colon U_i \backslash \mathcal{C}_i \to \mathbb{R}^4$ defined 
in Equations \eqref{eq:T1-def} and \eqref{eq:T2-def}.
Taking the maps $R_{\tau ,\alpha }^{1},\ldots ,R_{\tau ,\alpha
}^{5}:\mathbb{R}^{4}\rightarrow \mathbb{R}^{4}$ as%
\begin{align*}
R_{\tau ,\alpha }^{1}\left( x_{1}\right) & =\psi _{k}^{c}\left(
x_{1},s\right) , \\
R_{\tau ,\alpha }^{2}\left( x_{2}\right) & =T_{k}\left( x_{2}\right) , \\
R_{\tau ,\alpha }^{3}\left( x_{3}\right) & =\phi _{\alpha }\left( x_{3},\tau
\right) , \\
R_{\tau ,\alpha }^{4}\left( x_{4}\right) & =T_{l}^{-1}\left( x_{4}\right) ,
\\
R_{\tau ,\alpha }^{5}\left( x_{5}\right) & =\psi _{l}^{c}\left(
x_{5},s\right),
\end{align*}
we let 
\begin{equation*}
F:\mathbb{R\times }\underset{5 \ \text{copies}}{\underbrace{\mathbb{R}^{4}\mathbb{\times }%
\ldots \mathbb{\times R}^{4}}}\mathbb{\times R\times R\times R\rightarrow }%
\underset{6 \ \text{copies}}{\underbrace{\mathbb{R}^{4}\mathbb{\times }\ldots \mathbb{\times
R}^{4}}}
\end{equation*}%
be defined as %
\begin{equation}\label{eq:collisionOperator}
F\left( x_{0},x_{1},\ldots x_{5},x_{6},\tau ,\alpha \right):=
\left( 
\begin{array}{r@{\,\,\,}l}
P_{k}\left( x_{0}\right) & -\,\,\,x_{1}  \\ 
R_{\alpha ,\tau }^{1}\left(x_{1}\right) &- \,\,\, x_{2} \\ 
R_{\alpha ,\tau }^{2}\left(x_{2}\right) &- \,\,\, x_{3} \\ 
R_{\alpha ,\tau }^{3}\left(x_{3}\right) &- \,\,\, x_{4} \\ 
 R_{\alpha ,\tau }^{4}\left( x_{4}\right) &- \,\,\, x_{5} \\
 R_{\alpha ,\tau }^{5}\left( x_{5}\right) &- \,\,\, P_{l}\left( x_{6}\right)
\end{array}
\right),
\end{equation}
where $x_{0},x_{6},\tau ,\alpha \in \mathbb{R}$ and $%
x_{1},\ldots ,x_{5}\in \mathbb{R}^{4}$.
We also write $\left( x_{k},p_{k},y_{k},q_{k}\right) $ and $\left(
x_{l},p_{l},y_{l},q_{l}\right) $ to denote the regularized coordinates given by the
coordinate transformations $T_{k}$ and $T_{l}$, respectively.

\begin{lemma}\label{lem:collision-connections}
Let $\mathbf{x}^{\ast }=\left( x_{0}^{\ast },\ldots ,x_{6}^{\ast }\right) 
$ and $\tau ^{\ast }>0$. If   
\begin{equation*}
DF\left( \mathbf{x}^{\ast },\tau ^{\ast },0\right)
\end{equation*}%
is an isomorphism and 
\begin{equation*}
F\left( \mathbf{x}^{\ast },\tau ^{\ast },0\right) =0,
\end{equation*}%
then the orbit of the point $x_3^{\ast}$
is ejected from the primary body $m_{k}$ and collides with 
the primary body $m_{l}.$  (The same is true of the orbit of 
the point $x_4^{\ast}$.)
Moreover, intersection of the collision and ejection manifolds is
transversal on the energy level $\left\{ E=c\right\} $ and the time 
from the ejection to the collision is 
\begin{equation} 
\tau ^{\ast }+4\int_{0}^{s}\left\Vert \pi _{x_{k},y_{k}}\psi _{k}^{c}\left(
x_{1}^{\ast },u\right) \right\Vert ^{2}du+4\int_{0}^{s}\left\Vert \pi
_{x_{l},y_{l}}\psi _{l}^{c}\left( x_{5}^{\ast },u\right) \right\Vert ^{2}du.
\label{eq:time-between-collisions}
\end{equation}%
(Above we use the Euclidean norm.)
\end{lemma}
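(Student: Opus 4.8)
The plan is to recognize the operator $F$ of Equation \eqref{eq:collisionOperator} as a concrete instance of the multiple-shooting operator $\tilde F$ from Equation \eqref{eq:multi-prob}, with $n=6$, ambient dimension $d=4$, one conserved quantity $k=1$, endpoint charts $P_0 = P_k$ and $P_6 = P_l$ of domain dimension $d_0 = d_6 = 1$, and intermediate maps $R^1_{\tau,\alpha},\dots,R^5_{\tau,\alpha}$. The balance condition \eqref{eq:dimensions-multiple-shooting} holds since $d_0 + d_6 + 1 + k = 1+1+1+1 = 4 = d$. Granting this identification, the strategy is: (i) check that $\alpha$ is an unfolding parameter for the composition $R_{\tau,\alpha} = R^5_{\tau,\alpha}\circ\cdots\circ R^1_{\tau,\alpha}$; (ii) feed the hypotheses into Lemma \ref{lem:multiple-shooting-2} to pass from the full operator to the reduced operator $\mathcal F(x_0,x_6,\tau,\alpha) := R_{\tau,\alpha}(P_k(x_0)) - P_l(x_6)$; (iii) invoke Theorem \ref{th:single-shooting} to extract $\bar\alpha = 0$ and the transverse intersection; and finally (iv) translate the abstract conclusion into the dynamical statement using Theorem \ref{thm:LeviCivitta}.

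For step (i) I would observe that among the five intermediate maps only $R^3_{\tau,\alpha} = \phi_\alpha(\cdot,\tau)$ depends on $\alpha$. The maps $R^1_{\tau,\alpha} = \psi^c_k(\cdot,s)$ and $R^5_{\tau,\alpha} = \psi^c_l(\cdot,s)$ conserve $E^c_k$ and $E^c_l$ respectively, while $R^2_{\tau,\alpha} = T_k$ and $R^4_{\tau,\alpha} = T_l^{-1}$ intertwine the regularized zero-energy sets with $M = \{E=c\}$ by Lemma \ref{lem:energies-cond} and Equation \eqref{eq:energies-cond-m2}. Consequently, starting from a point of $M_1 = \{E^c_k = 0\}$, the energy $E$ of the image is modified only while flowing under $\phi_\alpha$, and along that flow $\tfrac{d}{dt}E = -2\alpha(p^2+q^2)$ as in Example \ref{ex:dissipative-unfolding}. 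Both conditions of Definition \ref{def:unfolding} for the composition then reduce to the corresponding statement for $\phi_\alpha$ from $M$ to $M$ (already recorded above): condition 2 is the observation that each map preserves the appropriate level set when $\alpha = 0$, and condition 1 follows because reaching $M_6 = \{E^c_l = 0\}$ forces the output of $\phi_\alpha$ to lie in $M$, hence $\alpha\int_0^\tau(p^2+q^2)\,dt = 0$, and the integral is strictly positive on a nontrivial orbit segment of length $\tau^* > 0$.

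With unfolding in hand, step (ii) is immediate: the hypotheses $F(\mathbf x^*,\tau^*,0)=0$ and $DF(\mathbf x^*,\tau^*,0)$ invertible are exactly the hypotheses of Lemma \ref{lem:multiple-shooting-2} (with $\bar\alpha = 0$), which yields $\mathcal F(\bar x_0,\bar x_6,\tau^*,0)=0$ and $D\mathcal F$ an isomorphism. Theorem \ref{th:single-shooting} then gives $\bar\alpha = 0$ and the transverse splitting \eqref{eq:th1-transversality} of $R(P_k(D_0),I,0)$ against $P_l(D_6)$ inside the regularized level set $\{E^c_l = 0\}$ at the isolated point $\bar y = P_l(\bar x_6)$. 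For the dynamical interpretation I would read off the componentwise equations $F(\mathbf x^*,\tau^*,0)=0$: the point $x_1^* = P_k(x_0^*)$ lies on the collision circle $\mathcal C_k$, so by case (3) of Theorem \ref{thm:LeviCivitta} the backward orbit of $x_3^* = T_k(\psi^c_k(x_1^*,s))$ is ejected from $m_k$; symmetrically $\psi^c_l(x_5^*,s) = P_l(x_6^*) \in \mathcal C_l$, so by case (2) of Theorem \ref{thm:LeviCivitta} the forward orbit of $x_4^* = \phi(x_3^*,\tau^*)$ collides with $m_l$. Since $x_3^*$ and $x_4^*$ lie on one $\phi$-orbit (recall $\alpha=0$, so $\phi_0=\phi$), that orbit is the desired $m_k$-to-$m_l$ ejection-collision, and the elapsed synodic time is the sum of the two time-recovery integrals \eqref{eq:time-recovery}--\eqref{eq:time-to-collision} and the middle flight time $\tau^*$, which is precisely Equation \eqref{eq:time-between-collisions}.

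The step I expect to require the most care is transferring the transversality from the regularized level set $\{E^c_l=0\}$, where Theorem \ref{th:single-shooting} delivers it, to the synodic level set $\{E=c\}$ claimed in the statement. The crossing point $\bar y$ sits on the collision circle $\mathcal C_l$, exactly where $T_l$ is singular, so one cannot push the splitting forward at $\bar y$ directly. I would instead flow the splitting back to the interior point $x_5^*$, the $\psi^c_l$-preimage of $\bar y$ at regularized time $s$, using that $\psi^c_l(\cdot,-s)$ is a diffeomorphism of $\{E^c_l=0\}$, and then apply $T_l$, which is a genuine diffeomorphism off $\mathcal C_l$ and carries $\{E^c_l = 0\}$ onto $\{E=c\}$, obtaining a transverse splitting at $x_4^* = T_l(x_5^*)$. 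This produces transversality of the forward-evolved ejection manifold against the backward-flowed collision circle; upgrading it to the clean transversality of the full (two-dimensional) ejection and collision manifolds along the orbit requires a short dimension count invoking the $\phi$-invariance of both manifolds together with the fact that the vector field direction $f(x_4^*)$ lies in the tangent space of each. This bookkeeping --- handling the coordinate singularity on $\mathcal C_l$ and matching the isolated-point transversality of Theorem \ref{th:single-shooting} with the along-the-orbit transversality of the manifolds --- is the main obstacle; the remaining verifications are routine given the earlier results.
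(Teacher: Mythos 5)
Your proposal is correct and follows essentially the same route as the paper's proof: identify $F$ from Equation \eqref{eq:collisionOperator} with the multiple-shooting template \eqref{eq:multi-prob}, verify that $\alpha$ is unfolding for the composition (only $R^3_{\tau,\alpha}=\phi_\alpha(\cdot,\tau)$ depends on $\alpha$, while $R^1,R^2,R^4,R^5$ carry the level sets $M_i$ into $M_{i+1}$ independently of $\alpha$), then apply Lemma \ref{lem:multiple-shooting-2}, Theorem \ref{th:single-shooting}, and Theorem \ref{thm:LeviCivitta} to read off the ejection-collision orbit and the time formula \eqref{eq:time-between-collisions}. The only difference is one of detail: the paper compresses your final paragraph --- transferring transversality from the regularized level set $\{E_{l}^{c}=0\}$ at the singular point $\bar y\in\mathcal{C}_l$ to the synodic level set $\{E=c\}$ along the orbit --- into the single assertion that transversality ``follows from Theorem \ref{th:single-shooting}'', whereas you spell out the diffeomorphism/flow-invariance bookkeeping explicitly.
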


\begin{proof}
We have $d_{0}=d_{6}=k=1$ and $d=4$, so the condition in Equation \eqref
{eq:dimensions-multiple-shooting} is satisfied.
We now show that $\alpha $ is an unfolding parameter for $R_{\tau
,\alpha }=R_{\tau ,\alpha }^{5}\circ \ldots \circ R_{\tau ,\alpha }^{1}$.
Since $E_{i}^{c}$ is an integral of motion for the flow $\psi _{i}^{c}$, for 
$i=1,2$, we see that%
\begin{equation*}
\begin{array}{rcl}
x_{1}\in M_{1}=\left\{ E_{k}^{c}=0\right\}  & \qquad \iff \qquad  & R_{\tau
,\alpha }^{1}\left( x_{1}\right) =\psi _{k}^{c}\left( x_{1},s\right) \in
M_{2}=\left\{ E_{k}^{c}=0\right\} ,\medskip  \\ 
x_{5}\in M_{5}=\left\{ E_{l}^{c}=0\right\}  & \qquad \iff \qquad  & R_{\tau
,\alpha }^{5}\left( x_{5}\right) =\psi _{l}^{c}\left( x_{5},s\right) \in
M_{6}=\left\{ E_{l}^{c}=0\right\} .%
\end{array}%
\end{equation*}%
Also, by Equations \eqref{eq:energies-cond-m1} and \eqref{eq:energies-cond-m2} we see
that%
\begin{equation*}
\begin{array}{rcl}
x_{2}\in M_{2}=\left\{ E_{k}^{c}=0\right\}  & \qquad \iff \qquad  & R_{\tau
,\alpha }^{2}\left( x_{2}\right) =T_{k}\left( x_{2}\right) \in M_{3}=\left\{
E=c\right\} ,\medskip  \\ 
x_{4}\in M_{4}=\left\{ E=c\right\}  & \qquad \iff \qquad  & R_{\tau ,\alpha
}^{4}\left( x_{2}\right) =T_{l}^{-1}\left( x_{4}\right) \in M_{5}=\left\{
E_{l}^{c}=0\right\} .%
\end{array}%
\end{equation*}%
Moreover $\alpha $ is an unfolding parameter for the PCRTBP, and hence
for 
\begin{equation*}
R_{\tau ,\alpha }^{3}\left( x_{3}\right) =\phi _{\alpha }\left( x_{3},\tau
\right).
\end{equation*}%
Note that for $i=1,2,4,5$, the maps$R_{\tau ,\alpha }^{i}$ takes the level sets $M_{i}$ into the level set $M_{i+1}$ and this does not depend on the choice of $\alpha$.
Then, since $\alpha $ is an unfolding parameter for $R_{\tau ,\alpha }^{3}$,
it follows directly from Definition \ref{def:unfolding} that $%
\alpha $ is an unfolding parameter for $R_{\tau ,\alpha }=R_{\tau ,\alpha
}^{5}\circ \ldots \circ R_{\tau ,\alpha }^{1}.$

By applying Lemma \ref{lem:multiple-shooting-2} to
\begin{equation*}
\tilde{F}\left( x_{0},x_{6},\tau ,\alpha \right) :=R_{\tau ,\alpha }\left(
P_{k}\left( x_{0}\right) \right) -P_{l}\left( x_{6}\right)
\end{equation*}%
we obtain that $D\tilde{F}\left( x_{0}^{\ast },x_{6}^{\ast },\tau ^{\ast
},0\right) $ is an isomorphism and that $\tilde{F}\left( x_{0}^{\ast
},x_{6}^{\ast },\tau ^{\ast },0\right) =0$. Since 
\begin{equation*}
\tilde{F}\left( x_{0}^{\ast },x_{6}^{\ast },\tau ^{\ast },0\right) =\psi
_{l}^{c}\left( T_{l}^{-1}\left( \phi \left( T_{k}\left( \psi _{k}^{c}\left(
P_{k}(x_{0}^{\ast }),s\right) \right) ,\tau ^{\ast }\right) \right)
,s\right) -P_{l}\left( x_{6}^{\ast }\right) ,
\end{equation*}%
we see that, by Theorem \ref{thm:LeviCivitta} (and its mirror counterpart for
the collision with $m_{2}$) we have an orbit originating 
at the point $P_{k}(x_{0}^{\ast })$ on the collision set for $m_k$, and 
terminating at the point $P_{l}\left( x_{6}^{\ast }\right) $
on the collision set for $m_l$. The
transversality of the intersection between the 
ejection manifold of $m_{k}$ and the collision manifold
of $m_{l}$ follows from Theorem \ref{th:single-shooting}.
The time between collisions in Equation \eqref{eq:time-between-collisions} follows
from Equation \eqref{eq:time-to-collision}.
\end{proof}

\begin{remark}[Additional shooting steps] \label{rem:additionalShooting}
{\em
We remark that in practice, computing accurate enclosures
of flow maps requires shortening the time step.  Consider for example 
the third and fourth component of $F$ as defined in 
Equation \eqref{eq:collisionOperator}, and  
suppose that time step of length $\nicefrac{\tau}{N}$ is desired.  By the properties
of the flow map, solving the sub-system of equations 
\begin{equation}\label{eq:colOp_comp3}
\begin{aligned}
R_{\alpha, \tau}^3(x_3) - x_4  = \phi_\alpha(x_3, \tau) - x_4 &= 0 \\
R_{\alpha, \tau}^4(x_4) - x_5  = T^{-1}_l(x_4) - x_5 &= 0
\end{aligned}
\end{equation}
is equivalent to solving 
\begin{align*}
\phi_\alpha(x_3, \nicefrac{\tau}{N})  - y_1 &= 0\\
\phi_\alpha(y_1, \nicefrac{\tau}{N})  - y_2 &= 0\\
&\vdots \\
\phi_\alpha(y_{N-2}, \nicefrac{\tau}{N})  - y_{N-1} &= 0 \\
\phi_{\alpha}(y_{N-1}, \nicefrac{\tau}{N})  - x_4 &= 0 \\
T_l^{-1}(x_4) - x_5 &= 0,
\end{align*}
and we can append these new variables and 
components to the map $F$ defined in Equation \eqref{eq:collisionOperator}
without changing the zeros of the operator.  
Moreover, by Lemma \ref{lem:multiple-shooting-2} the 
transversality result for the operator is not changed by the addition of  
additional steps.  Indeed, by the same reasoning we can (and do) add intermediate 
shooting steps in the regularized coordinates
to reduce the time steps to any desired tolerance.
}
\end{remark}


\section{Connections between collisions and libration points $L_{4}$, $L_{5}$%
}\label{sec:L4_to_collision}

For each value of $\mu \in (0, 1/2]$, 
the PCRTBP has exactly five equilibrium solutions.
For traditional reasons, these are referred to as libration points
of the PCRTBP. Three of these are collinear with the primary bodies,
and lie on the $x$-axis. These are referred to as $L_1, L_2$ and 
$L_3$, and they correspond to the co-linear relative equilibrium
solutions discovered by Euler. The remaining two libration points
are located at the third vertex of the equilateral triangles whose other two
vertices are the primary and secondary bodies. These are referred to as $L_4$
and $L_5$, and correspond to the equilateral triangle solutions of Lagrange. 
Figure \ref{fig:PCRTBP_librations} illustrates the locations of the 
libration points in the phase space.

\begin{figure}[!t]
\centering
\includegraphics[height=5cm]{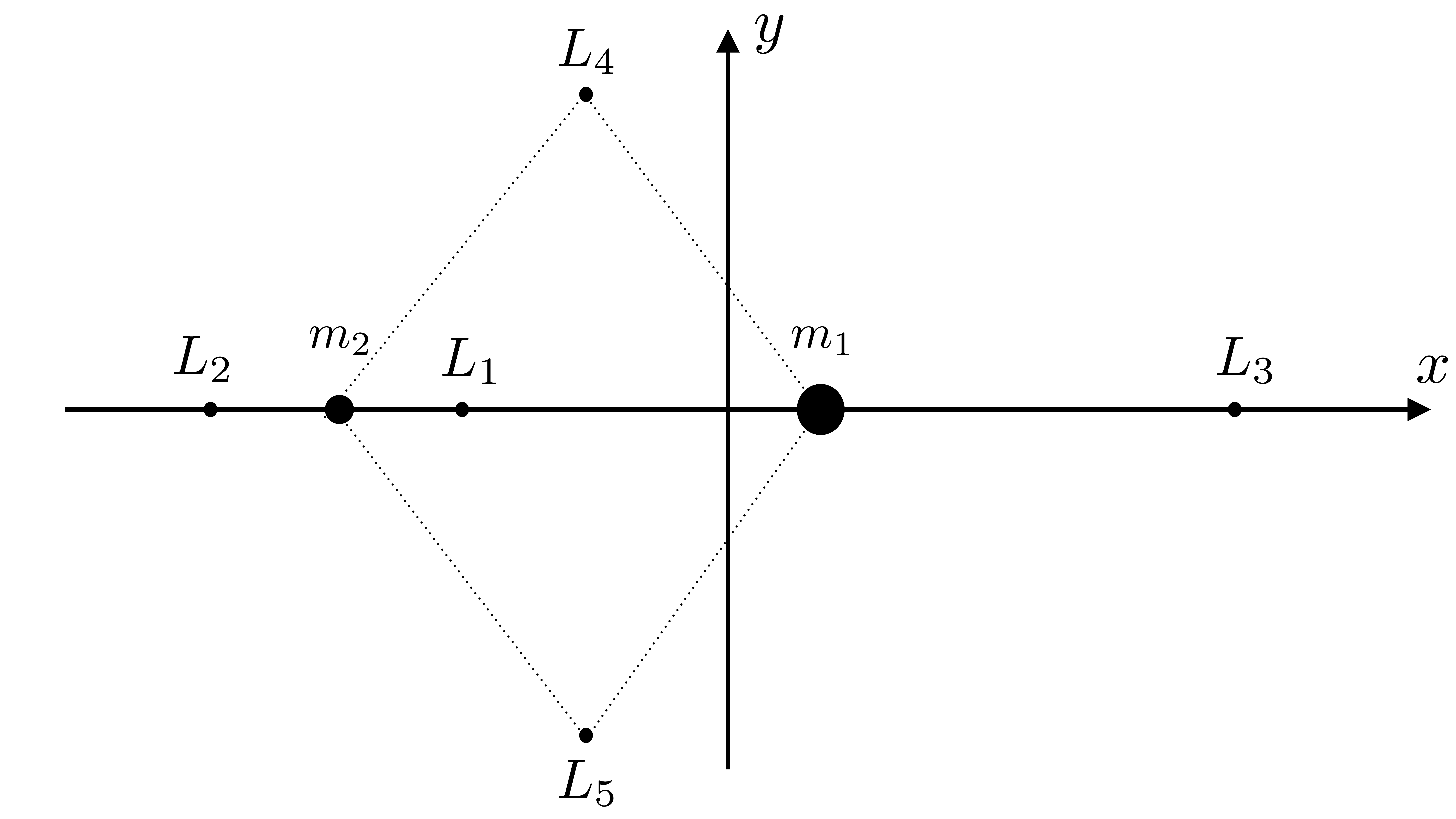}
\caption{The three collinear libration points $L_{1,2,3}$ and the
equilateral triangle libration points $L_{4, 5}$, relative to the positions 
of the primary masses $m_1$ and $m_2$.}
\label{fig:PCRTBP_librations}
\end{figure}

For all values of the mass ratio, the collinear libration points have saddle 
$\times$ center stability. The center manifolds give rise to important
families of periodic orbits known as Lyapunov families. The stability of $%
L_4 $ and $L_5$ depend on the mass ratio $\mu$. For 
\begin{equation*}
0 < \mu < \mu_* \approx 0.04,
\end{equation*}
where the exact value is $\mu_* = 2/(25 + \sqrt{621})$,  the triangular
libration points have center $\times$ center stability. That is, they are
stable in the the sense of Hamiltonian systems and exhibit the full ``zoo''
of nearby KAM objects.

When $\mu > \mu_*$, the triangular libration points $L_4$ and $L_5$ have
saddle-focus stability. That is, they have a complex conjugate pair of
stable and a complex conjugate pair of 
unstable eigenvalues. The four
eigenvalues then have the form 
\begin{equation*}
\lambda = \pm \alpha \pm i \beta,
\end{equation*}
for some $\alpha, \beta > 0$. In this case, each libration point has an
attached two dimensional stable and two dimensional unstable manifold. Since
these two dimensional manifolds live in the three dimensional energy level
set of $L_{4,5}$, 
there exists the possibility
that they intersect the two dimensional collision or ejection
manifolds of the primaries transversely. It is also possible that the
 stable/unstable manifolds of $L_{4,5}$  intersect one other transversely giving rise to
homoclinic or heteroclinic connecting orbits. 

In fact, in this paper we prove that both of these phenomena occur and in this section we discuss our method for proving the existence of intersections between a stable/unstable manifold of $L_{4,5}$,  and
an ejection/collision manifold of a primary body.  Any point of intersection between these 
manifolds gives rise to an orbit which is asymptotic  
to $L_4$, but which collides or is ejected from one of the 
massive bodies.  
Two such orbits are illustrated in Figure \ref{fig:EC_to_collision}.

\begin{figure}[!t]
\centering
\includegraphics[height=4.75cm]{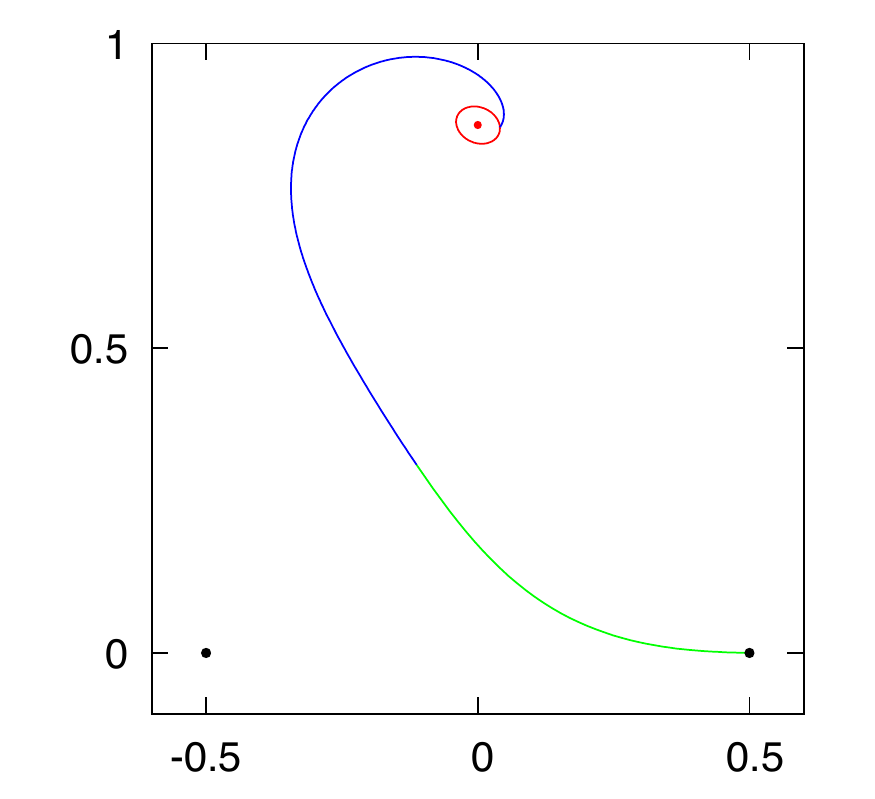}\includegraphics[height=4.75cm]{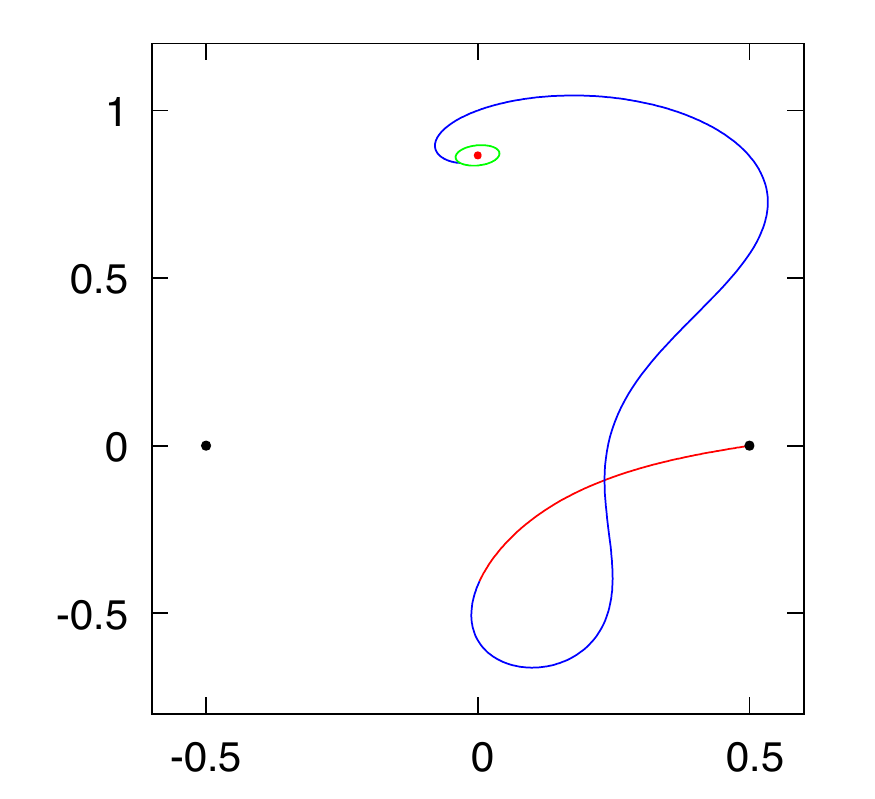}
\caption{Libration-to-collision and ejection-to-libration orbits for 
$\mu = 1/2$ and $c = 3$ (which is the $L_4$ value of the Jacobi constant
in the equal mass problem).  The left
frame illustrates an ejection to $L_4$ orbit, and the right frame an $L_4$ 
to collision.  In each frame $m_1$ is depicted as a black dot and $L_4$ 
as a red dot.  The boundary of a parameterized local unstable manifold for 
$L_4$ is depicted as the red circle; stable boundary the green circle.  The 
orbits are found by computing an approximate zero of the map
defined in Equation \eqref{eq:EC_to_L4_operator}. 
The green portion of the left, and red portion of the right 
curves are computed in regularized coordinates
for the body $m_1$, where we have fixed $s = 0.5$ regularized
time units before the change back to original/synodic coordinates.  
These orbit segments are transformed back to the original 
coordinates for the plot.}
\label{fig:EC_to_collision}
\end{figure}

Let $\overline{B}\subset \mathbb{R}^{2}$ denote a closed ball with radius $1$.
 %
 Assume that%
\begin{equation*}
w_{j}^{\kappa }:\overline{B}\rightarrow \mathbb{R}^{4}\qquad \text{%
for }j\in \{4,5\}\text{ and }\kappa \in \left\{ u,s\right\},
\end{equation*}%
parameterize the two dimensional local stable/unstable manifolds of $L_{j}$. 
We assume that the charts  are normalized so
that $w_{j}^{\kappa }\left( 0\right) =L_{j}$.  Then 
\begin{equation*}
w_{j}^{\kappa }\left( \overline{B}\right) =W_{\text{loc}}^{\kappa
}\left( L_{j}\right) \qquad \text{for }j\in \{4,5\},\text{ }\kappa \in
\left\{ u,s\right\} .
\end{equation*}%
Define the functions%
\begin{equation*}
P_{j}^{\kappa }:\mathbb{R}\rightarrow \mathbb{R}^{4}\qquad \text{for }j\in
\{4,5\}\text{ and }\kappa \in \left\{ u,s\right\},
\end{equation*}%
by%
\begin{equation}
P_{j}^{\kappa }\left( \theta \right) :=w_{j}^{\kappa }\left( \cos
\theta ,\sin \theta \right) .  \label{eq:Pj-lib}
\end{equation}
For $i\in \{1,2\}$ consider $P_{i}$ as defined in Equation \eqref{eq:collisions-par-Pi}. 

For 
\begin{equation*}
\mathbf{x}=\left( x_{0},x_{1},x_{2},x_{3},x_{4}\right) \in \mathbb{R}^{14},
\end{equation*}%
where $x_{0},x_{4}\in \mathbb{R}, x_{1},x_{2},x_{3}\in \mathbb{R}^{4}$, and $j \in \left\{ 4,5\right\} $ we
define 
\begin{equation*}
F_{i,j}^{u},F_{i,j}^{s}:\mathbb{R}^{16}\rightarrow \mathbb{R}^{16},
\end{equation*}%
by the formulas
\begin{equation} \label{eq:EC_to_L4_operator}
F_{i,j}^{u}\left( \mathbf{x},\tau ,\alpha \right) =\left( 
\begin{array}{r@{\,\,-\,\,}l}
P_{j}^{u}\left( x_{0}\right) & x_{1} \\ 
\phi _{\alpha }\left( x_{1},\tau \right) & x_{2} \\ 
T_{i}^{-1}(x_{2}) & x_{3} \\ 
\psi _{i}^{c_{j}}\left( x_{3},s\right) & P_{i}(x_{4})%
\end{array}%
\right) ,\quad F_{i,j}^{s}\left( \mathbf{x},\tau ,\alpha \right) =\left( 
\begin{array}{r@{\,\,-\,\,}l}
P_{i}(x_{0}) & x_{1} \\ 
\psi _{i}^{c_{j}}\left( x_{1},s\right) & x_{2} \\ 
T_{i}(x_{2}) & x_{3} \\ 
\phi _{\alpha }\left( x_{3},\tau \right) & P_{j}^{s}\left( x_{4}\right)%
\end{array}%
\right).
\end{equation}%
Here $\tau ,\alpha \in \mathbb{R}$ and the constant $c_{j}$ in $\psi
_{i}^{c_{j}}$ is chosen as $c_{j}=E\left( L_{j}\right) $.

Zeros of the operator $F_{i,j}^{u}$ correspond to intersections of the
unstable manifold of $L_{j}$ with the collision manifold of mass $m_{i}.$ 
We also refer to this as a heteroclinic connection from $L_{j}$ to $m_{i}$.
Similarly, zeros of the operator $F_{i,j}^{s}$ correspond to intersections between
the stable manifold of $L_{j}$ with the ejection manifold of mass $m_{i}.$ In other
words, they lead to heteroclinic connections ejected from $%
m_{i}$ and limiting to the libration point $L_{j}$ in forward time. This is expressed formally in the following lemma.

\begin{lemma}\label{lem:Li-collisions}
Fix $i\in \left\{ 1,2\right\} ,$ $j\in \{4,5\}$, and $\kappa \in \left\{
u,s\right\} $.  Suppose there exists
$\mathbf{x}^{\ast} = (x_0^{\ast}, x_1^{\ast}, x_2^{\ast}, x_3^{\ast}, x_4^{\ast}) \in \mathbb{R}^{14}$ 
and $\tau ^{\ast } > 0 $ satisfying
\begin{equation*}
	F_{i,j}^{\kappa }\left( \mathbf{x}^{\ast },\tau ^{\ast },0\right) =0,
\end{equation*}
and such that 
\begin{equation*}
DF_{i,j}^{\kappa }\left( \mathbf{x}^{\ast },\tau ^{\ast },0\right)
\end{equation*}%
is an isomorphism. Then we have the following two cases. 
\begin{enumerate}
\item If $\kappa = u$, then the orbit of $x_1^{\ast}$ is heteroclinic from the libration point $L_{j}$
to collision  with $m_{i}$ and the intersection of $W^{u}\left( L_{j}\right)$ with the collision manifold of $m_{i}$ is transverse with respect to the energy level $\left\{ E=c_j\right\} $.
\item If $\kappa = s$, then the orbit of $x_3^{\ast}$ is heteroclinic from the libration point $L_{j}$
to ejection with $m_{i}$ and the intersection of $W^{s}\left( L_{j}\right)$ with the ejection manifold of $m_i$ is transverse with respect to the energy level $\left\{ E=c_j\right\} $.
\end{enumerate}
%
\end{lemma}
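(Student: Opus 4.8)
The plan is to recognize each $F_{i,j}^{\kappa}$ as an instance of the multiple shooting operator $\tilde{F}$ of Equation \eqref{eq:multi-prob} and to argue exactly as in the proof of Lemma \ref{lem:collision-connections}. I would carry out the case $\kappa = u$ in detail; the case $\kappa = s$ is identical after reversing the order of the composition. First I would read off the pieces of the multiple shooting problem: here $n = 4$, the boundary charts are $P_0 = P_j^u$ and $P_4 = P_i$ (so $d_0 = d_4 = 1$), the intermediate maps are $R_{\tau,\alpha}^1 = \phi_\alpha(\cdot,\tau)$, $R_{\tau,\alpha}^2 = T_i^{-1}$, and $R_{\tau,\alpha}^3 = \psi_i^{c_j}(\cdot,s)$, and we have $k = 1$, $d = 4$. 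The dimension balance $d_0 + d_4 + 1 + k = 1+1+1+1 = 4 = d$ required by Equation \eqref{eq:dimensions-multiple-shooting} is then immediate. The relevant level sets are $M_0 = M_1 = M_2 = \{E = c_j\}$ in synodic coordinates and $M_3 = M_4 = \{E_i^{c_j} = 0\}$ in regularized coordinates, where I use that the local unstable manifold $W_{\text{loc}}^{u}(L_j)$ parameterized by $w_j^u$ lies in the energy level $\{E = E(L_j)\} = \{E = c_j\}$, since stable and unstable manifolds of an equilibrium are contained in its energy level.

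The crucial step is to verify that $\alpha$ is an unfolding parameter for the composition $R_{\tau,\alpha} = R_{\tau,\alpha}^3 \circ R_{\tau,\alpha}^2 \circ R_{\tau,\alpha}^1$ from $M_0$ to $M_4$, in the sense of Definition \ref{def:unfolding}. The coordinate change $R^2 = T_i^{-1}$ carries $\{E = c_j\}$ into $\{E_i^{c_j} = 0\}$ by the energy equivalence of Lemma \ref{lem:energies-cond} (and its $m_2$ counterpart, Equation \eqref{eq:energies-cond-m2}), the regularized flow $R^3 = \psi_i^{c_j}$ preserves $\{E_i^{c_j} = 0\}$, and neither map depends on $\alpha$; the only $\alpha$-dependence sits in $R^1 = \phi_\alpha$, for which $\alpha$ is an unfolding parameter by Example \ref{ex:dissipative-unfolding}. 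Threading a point $x \in M_0$ through the composition and peeling off the two energy-preserving maps shows that $R_{\tau,\alpha}(x) \in M_4$ forces $E(\phi_\alpha(x,\tau)) = c_j$ with $E(x) = c_j$, hence $\alpha = 0$; the second condition of Definition \ref{def:unfolding} follows the same way with $\alpha = 0$ fixed. Thus $\alpha$ is unfolding for the sequence.

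With this in hand I would invoke Lemma \ref{lem:multiple-shooting-2} with $\tilde{F} = F_{i,j}^u$; its hypotheses are exactly the assumed $F_{i,j}^u(\mathbf{x}^\ast,\tau^\ast,0) = 0$ together with $DF_{i,j}^u(\mathbf{x}^\ast,\tau^\ast,0)$ an isomorphism, and its conclusion is that the reduced operator $F(x_0,x_4,\tau,\alpha) = R_{\tau,\alpha}(P_j^u(x_0)) - P_i(x_4)$ of Equation \eqref{eq:F-parallel} satisfies $F(x_0^\ast,x_4^\ast,\tau^\ast,0) = 0$ with $DF$ an isomorphism there. Theorem \ref{th:single-shooting} then delivers a transverse intersection, relative to the energy level $\{E = c_j\}$, of $W^u(L_j)$ with the collision manifold of $m_i$ at $\bar{y} = P_i(x_4^\ast)$. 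To finish I would read off the dynamical content via Theorem \ref{thm:LeviCivitta}: the regularized segment $\psi_i^{c_j}(x_3^\ast,[0,s])$ landing on the collision circle $P_i(x_4^\ast)$ transforms under $T_i$ and concatenates with the orbit of $x_1^\ast$ under $\phi = \phi_0$ into a genuine PCRTBP orbit colliding with $m_i$ in finite synodic time, while the endpoint $x_1^\ast = P_j^u(x_0^\ast) \in W_{\text{loc}}^{u}(L_j)$ makes the orbit backward asymptotic to $L_j$, taking infinite backward time to reach it. The case $\kappa = s$ produces the ejection-to-$L_j$ statement by the mirror argument, with the flow step last rather than first.

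I expect the main obstacle to be the bookkeeping in the unfolding verification, namely correctly threading the energy constraints through the non-$\alpha$-dependent maps $T_i$ and $\psi_i^{c_j}$ so that the unfolding property of $\phi_\alpha$ can be isolated, along with the need to apply the $m_1$ versus $m_2$ forms of Lemma \ref{lem:energies-cond} uniformly in $i$. By contrast, the transversality and the asymptotic interpretation are direct consequences of the already-established Theorems \ref{th:single-shooting} and \ref{thm:LeviCivitta} together with the defining properties of the manifold charts $w_j^\kappa$.
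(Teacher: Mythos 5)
Your proposal is correct and follows exactly the route the paper intends: the paper's own proof of Lemma \ref{lem:Li-collisions} is a one-line deferral to the argument of Lemma \ref{lem:collision-connections}, and your write-up carries out precisely that argument (dimension count for Equation \eqref{eq:dimensions-multiple-shooting}, verification that $\alpha$ is unfolding for the composition via Lemma \ref{lem:energies-cond} and Example \ref{ex:dissipative-unfolding}, reduction through Lemma \ref{lem:multiple-shooting-2}, transversality from Theorem \ref{th:single-shooting}, and dynamical interpretation via Theorem \ref{thm:LeviCivitta}). No gaps beyond those the paper itself leaves implicit.
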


\begin{proof}
The proof follows from an argument similar to the proof of Lemma \ref{lem:collision-connections}.
\end{proof}


\bigskip

By a small modification of the operator just defined, we can 
study orbits homoclinic or heteroclinic to the libration points as well.  Such orbits 
arise as intersections of the stable/unstable manifolds of 
the libration points, and lead naturally to two point BVPs.
Three such orbits, homoclinic to $L_4$ in the 
PCRTBP, are illustrated in Figure \ref{fig:PCRTBP_L4_homoclinics}.

Note that homoclinic/heteroclinic connections between equilibrium solutions 
do not require changing to regularized coordinates as
such orbits exists for all forward and backward time and cannot 
have any collisions.  While this claim is mathematically correct, 
any homoclinic/heteroclinic orbit which passes sufficiently close to a
collision with $m_{i}$ for $i\in \left\{ 1,2\right\} $ becomes
difficult to continue numerically. Consequently, these orbits may still be difficult or impossible to validate via computer assisted proof. 
In this case regularization techniques are an asset even when studying orbits 
which 
pass near a collision.   
The left and center homoclinic orbits in
 Figure \ref{fig:PCRTBP_L4_homoclinics} for example are computed 
 entirely in the usual PCRTBP coordinates, while the right orbit was 
 computed using both coordinate systems.  With this in mind we express the homoclinic/heteroclinic problem in the framework set up in the previous sections.  
\begin{figure}[!t]
\centering
\includegraphics[height=3.8cm]{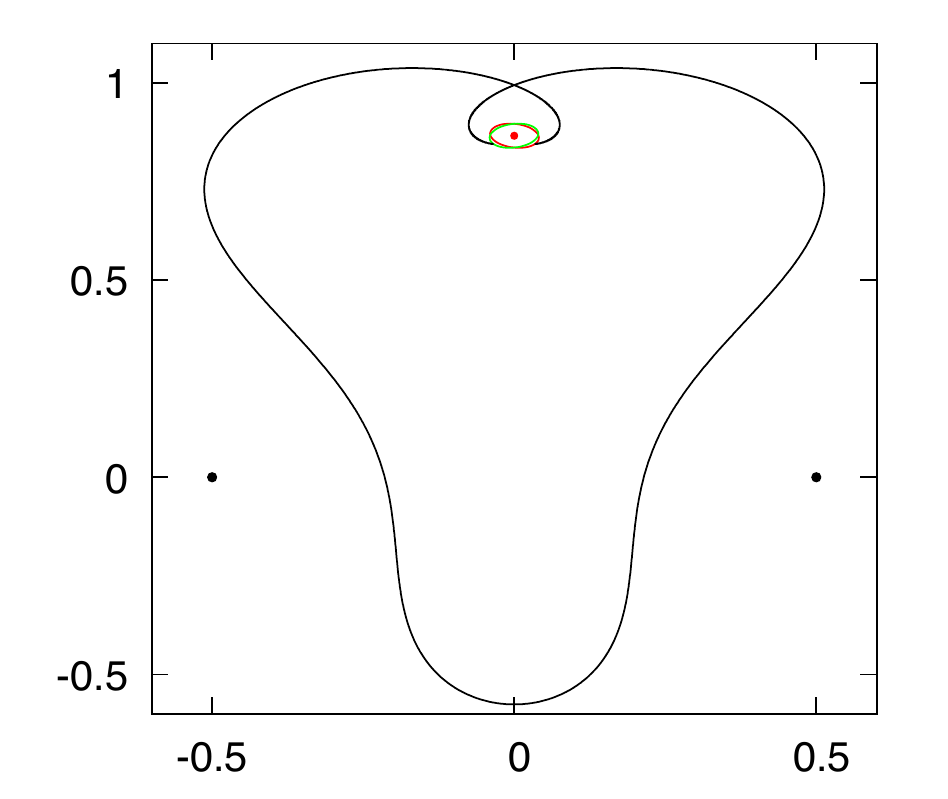}\includegraphics[height=3.8cm]{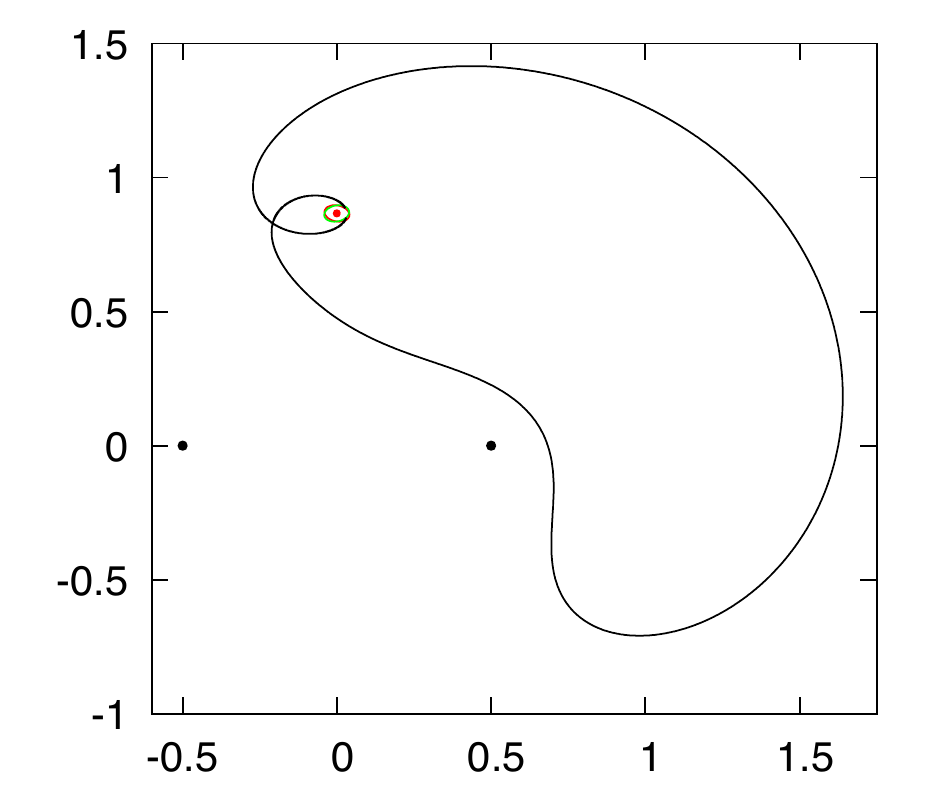}\includegraphics[height=3.8cm]{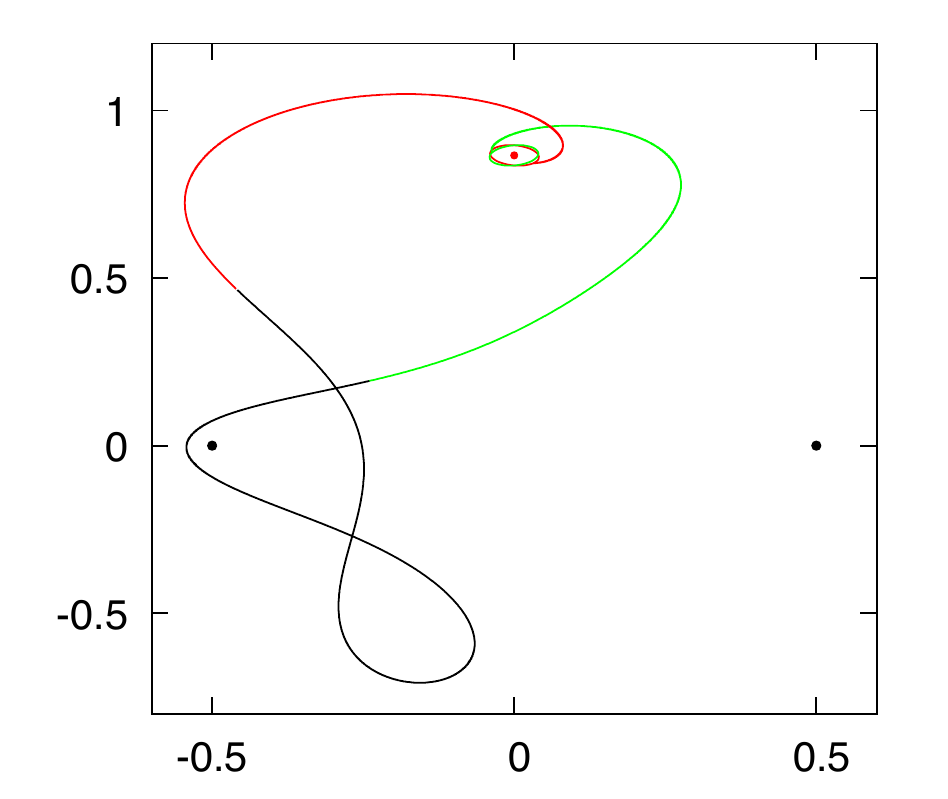}
\caption{
Transverse homoclinic orbits at $L_4$
for $\mu = 1/2$ in the $C = 3$ energy level. 
Each orbit traverses the illustrated curves in a clockwise fashion.  
The left and center orbits were known to 
Stromgren and Szebeheley.  The center and right 
orbits possess no symmetry, and the orbit on the right
passes close to collision with $m_2$.
Each orbit is found by approximately computing a zero of the 
map defined in Equation \eqref{eq:homoclinicOperator}.  
The left and center orbits are computed in only the standard 
coordinate system.  In the definition of the shooting template, we allow
the orbit to spend $s_1 = 1.8635$ regularized time units in Levi-Civita
coordinates and to flow for 
$s_2 =5$ time units in the original/synodic coordinates
before reaching the stable manifold.
}
\label{fig:PCRTBP_L4_homoclinics}
\end{figure}

Let $P^{\kappa}_{j}:\mathbb{R}\rightarrow \mathbb{R}^{4}$, for $j\in \left\{
4,5\right\} $ be the functions defined in Equation \eqref{eq:Pj-lib} and consider 
\begin{equation*}
\mathbf{x}=\left( x_{0},\ldots ,x_{6}\right) \in \mathbb{R}^{22},
\end{equation*}%
where $x_{0},x_{6}\in \mathbb{R}$ and $x_{1},\ldots ,x_{5}\in \mathbb{R}^{4}$%
, and fix $s_{1},s_{2}>0$. Let 
\begin{equation*}
F_{i,j,k}:\mathbb{R}^{24}\rightarrow \mathbb{R}^{24},\qquad \text{for }%
j,k\in \{4,5\},i\in \left\{ 1,2\right\} ,
\end{equation*}%
be defined as%
\begin{equation} \label{eq:homoclinicOperator}
F_{i,j,k}\left( \mathbf{x},\tau ,\alpha \right) :=\left( 
\begin{array}{r@{\,\,-\,\,}l}
P_{j}^{u}\left( x_{0}\right)  & x_{1} \\ 
\phi _{\alpha }\left( x_{1},\tau \right)  & x_{2} \\ 
T_{i}^{-1}(x_{2}) & x_{3} \\ 
\psi _{i}^{c_{j}}\left( x_{3},s_{1}\right)  & x_{4} \\ 
T_{i}(x_{4}) & x_{5} \\ 
\phi _{\alpha }\left( x_{5},s_{2}\right)  & P_{k}^{s}\left( x_{6}\right) 
\end{array}%
\right) .
\end{equation}
One can formulate an analogous result to the Lemmas 
\ref{lem:collision-connections} and \ref{lem:Li-collisions}, 
so that
\[
F_{i,j,k}\left( \mathbf{x}^{\ast },\tau ^{\ast },0\right) =0,
\]
together with $DF_{i,j,k}\left( \mathbf{x}^{\ast },\tau ^{\ast },0\right) $
an isomorphism implies that the manifolds $W^{u}\left( L_{j}\right) $ and 
$W^{s}\left( L_{k}\right) $ intersect transversally.

Again, the advantage of
solving $F_{i,j,k}=0$ over parallel shooting in the original coordinates
is that one can establish the existence of connections which pass
arbitrarily close to a collision $m_{1}$ and/or $m_2$.
Indeed, the operator defined in Equation \eqref{eq:homoclinicOperator}
can be generalized to study homoclinic orbits which make any 
finite number of flybys of the primaries in any order before returning 
to $L_{4,5}$ by making additional changes of variables to regularized
coordinates every time the orbit passes near collision.  



\section{Symmetric periodic orbits passing through collision\label%
{sec:symmetric-orbits}}

In this section we show that our method applies to the study of 
families of periodic orbits which pass through a
collision. By this we mean the following. We will prove the existence of 
a family of orbits parameterized by the value of the Jacobi constant on an interval.
	As in the introduction, we refer to this as a tube of 
periodic orbits.  For all values in the interval except one, the intersection
of the energy level set with the tube is a periodic orbit. For a single
isolated value of the energy the intersection of the energy level set with  
the tube is an ejection-collision orbit involving $m_{1}$. The situation
is depicted in Figure \ref{fig:Lyap}.

\begin{figure}[tbp]
\begin{center}
\includegraphics[height=3.95cm]{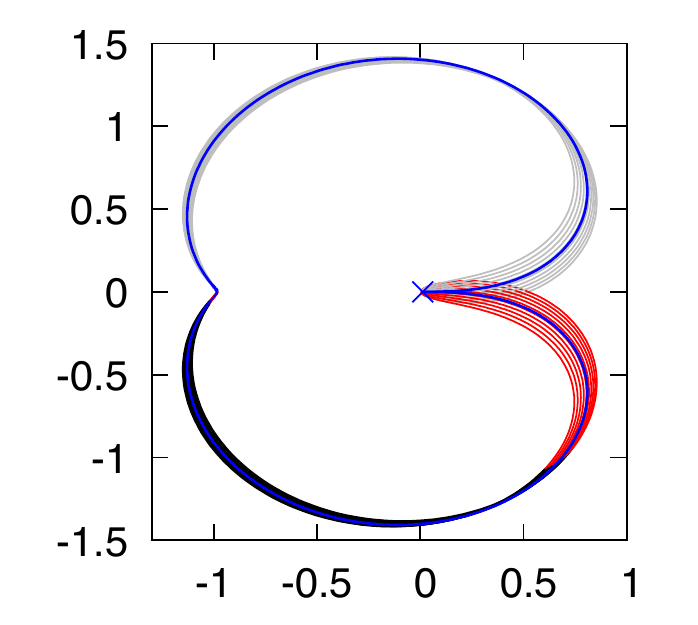} %
\includegraphics[height=3.95cm]{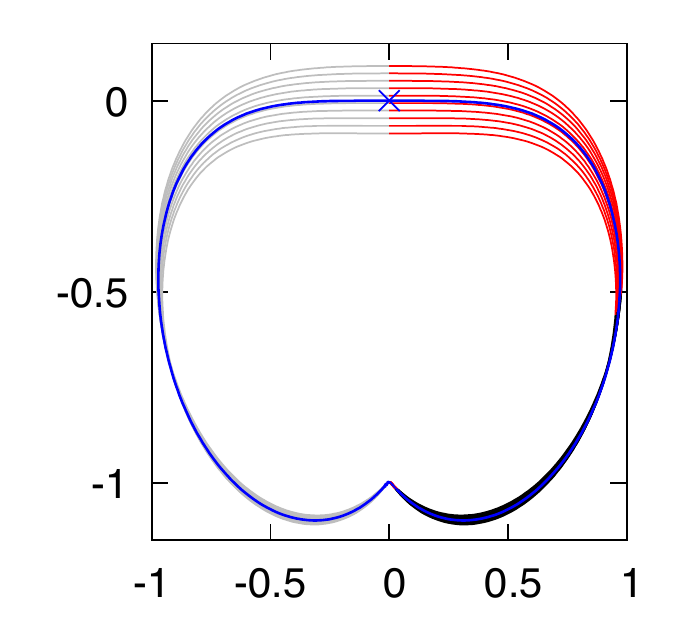} %
\includegraphics[height=3.95cm]{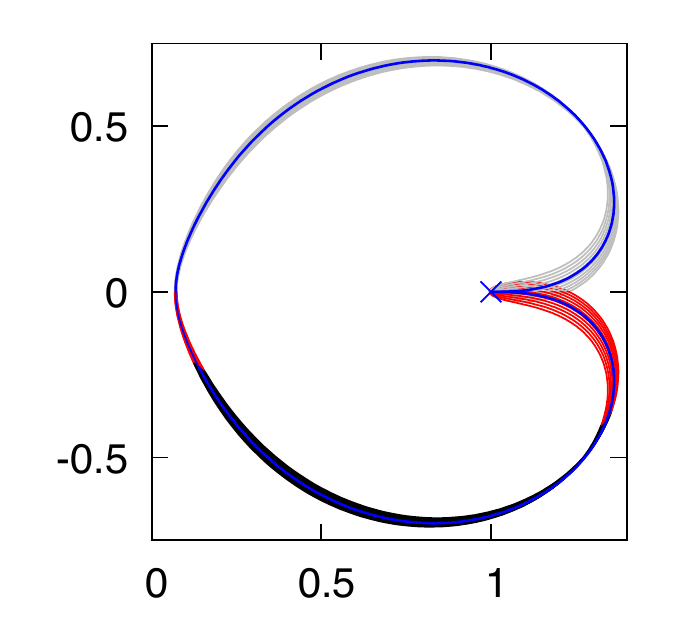}
\par
\includegraphics[height=3.95cm]{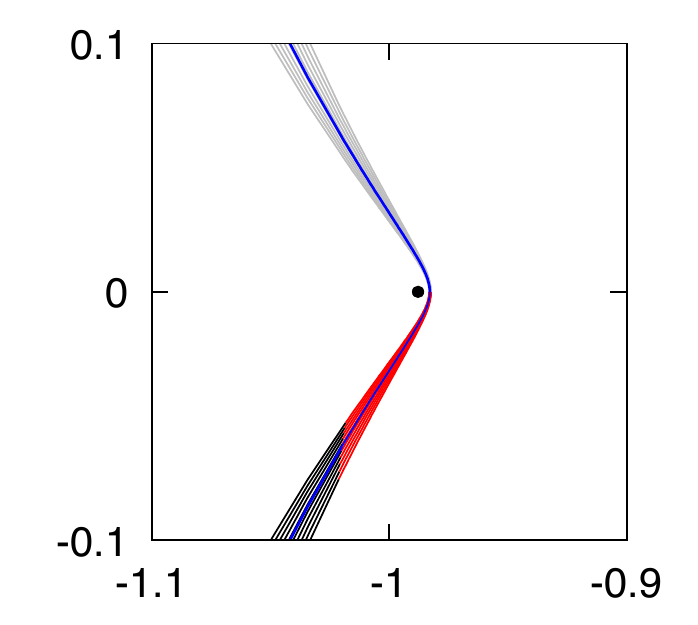} %
\includegraphics[height=3.95cm]{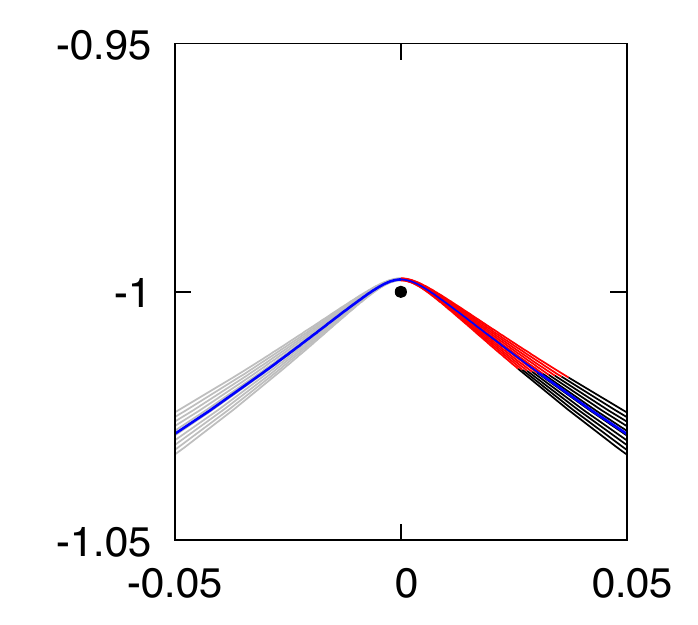} %
\includegraphics[height=3.95cm]{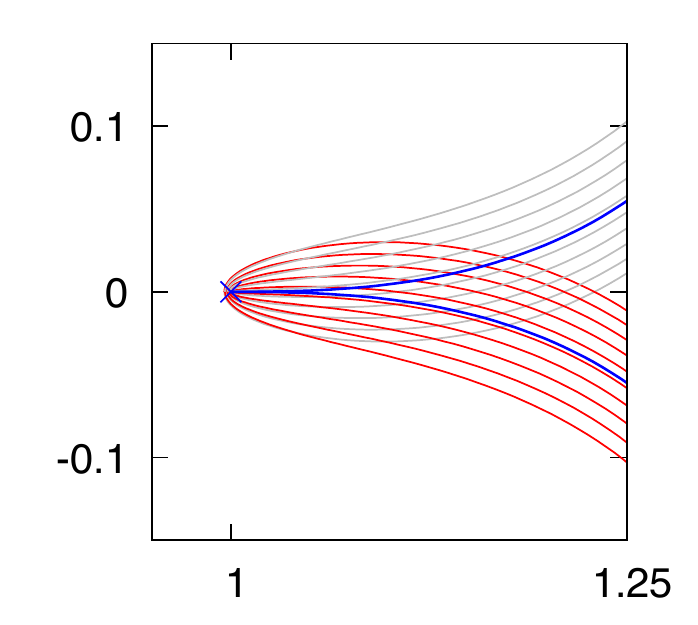}
\end{center}
\caption{A family of Lyapunov periodic orbits passing through a collision.
The left two figures are in the original coordinates, the middle two are in
the regularised coordinates at $m_{1}$ and the right two are in 
regularised coordinates at $m_{2}$. (Compare with Figure \protect\ref%
{fig:PCRTBP_coordinates}.) The trajectories computed in the original
coordinates are in black, and the trajectories computed in the regularized
coordinates are in red. The collision with $m_1$ is indicated by a cross.
The mass $m_2$ is added in the closeup figures as a black dot. The operator (%
\protect\ref{eq:Fc-choice}) gives half of a periodic orbit in red and black.
The second half, which follows from the symmetry, is depicted in grey. The
plots are for the Earth-moon system.}
\label{fig:Lyap}
\end{figure}

\begin{figure}[tbp]
\begin{center}
\includegraphics[height=3.95cm]{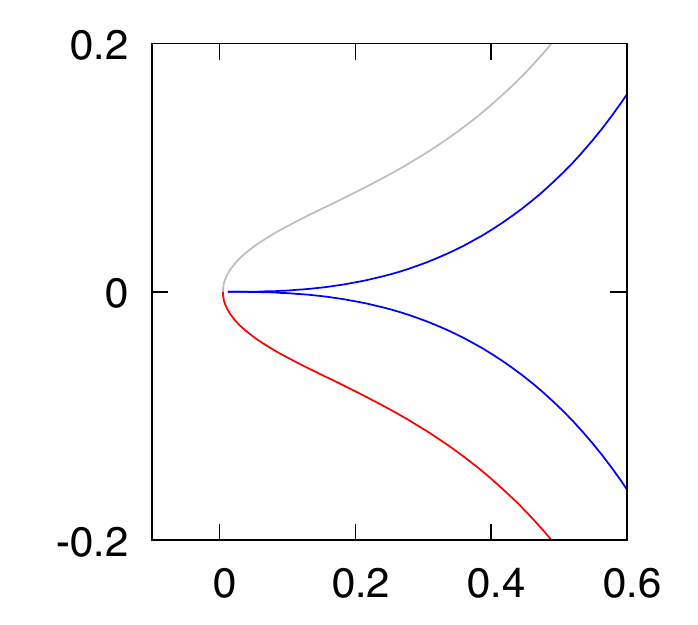} %
\includegraphics[height=3.95cm]{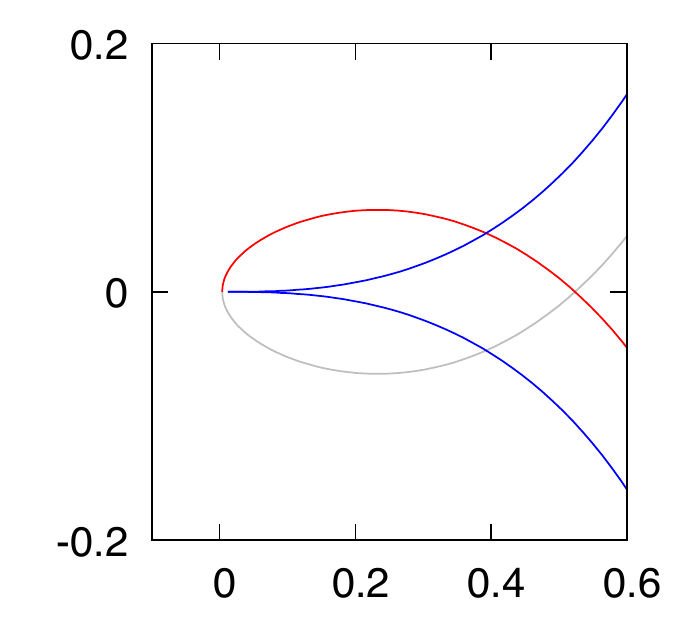} %
\end{center}
\caption{A closeup of a Lyapunov orbit before (left) and after (right) passing through the collision. 
The plot is in the original coordinates.}
\label{fig:Lyap-closeup}
\end{figure}

To establish such a family of periodic orbits we make use of the time
reversing symmetry of the PCRTBP. Recall that for%
\begin{equation*}
S\left( x,p,y,q\right) :=\left( x,-p,-y,q\right)
\end{equation*}%
and for the flow $\phi \left( \mathbf{x},t\right) $ of the PCRTBP we have 
that%
\begin{equation}
S\left( \phi \left( \mathbf{x},t\right) \right) =\phi \left( S\left( \mathbf{%
x}\right) ,-t\right) .  \label{eq:symmetry-prop}
\end{equation}%
Let us introduce the notation $\mathcal{S}$ to stand for the set of self $S$%
-symmetric points%
\begin{equation*}
\mathcal{S:}=\left\{ \mathbf{x}\in \mathbb{R}^{4}:\mathbf{x}=S\left( \mathbf{%
x}\right) \right\} .
\end{equation*}

The property in Equation \eqref{eq:symmetry-prop} is used to find periodic orbits as
follows. Suppose  $\mathbf{x},\mathbf{y}\in \mathcal{S}
$ satisfy $\mathbf{y}=\phi \left( \mathbf{x},t\right)$.  Then by Equation \eqref{eq:symmetry-prop}, 
we have 
\begin{equation}
\phi \left( \mathbf{x},2t\right) =\phi \left( \mathbf{y},t\right) =\phi
\left( S\left( \mathbf{y}\right) ,t\right) =S\left( \phi \left( \mathbf{y}%
,-t\right) \right) =S\left( \mathbf{x}\right) =\mathbf{x},
\label{eq:S-symm-periodic}
\end{equation}%
meaning that $\mathbf{x}$ lies on a periodic orbit. Our strategy is then 
to set up a boundary value problem which shoots from $\mathcal{S%
}$ to itself.

The set $\mathcal{S}$ lies on the $x$-axis in the $\left( x,y\right)$
coordinate frame. From the left plot in Figure \ref{fig:Lyap} it is clear that 
we are interested in points on $\mathcal{S}$ which will pass
through collision with $m_{1}$ and close to the collision with $m_{2}$.
We therefore consider the set $\mathcal{S}$ transformed to the 
regularized coordinates of $m_1$ and $m_2$.

\begin{lemma}
Let $\mathcal{\hat{S}},\mathcal{\tilde{S}}\subset \mathbb{R}^{4}$ be defined
as%
\begin{eqnarray*}
\mathcal{\hat{S}} &=&\left\{ \left( 0,\hat{p},\hat{y},0\right) :\hat{p},\hat{%
y}\in \mathbb{R}\right\} , \\
\mathcal{\tilde{S}} &=&\left\{ \left( \tilde{x},0,0,\tilde{q}\right) :\tilde{%
x},\tilde{q}\in \mathbb{R}\right\} .
\end{eqnarray*}%
Then $T_{1}(\mathcal{\hat{S}})=\mathcal{S}$ and $T_{2}(\mathcal{\tilde{S}})=%
\mathcal{S}$.
\end{lemma}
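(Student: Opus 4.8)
The plan is to reduce the claim to a direct substitution into the explicit chart formulas \eqref{eq:T1-def} and \eqref{eq:T2-def}, after first recording $\mathcal{S}$ in closed form. Solving the fixed-point equation $\mathbf{x} = S(\mathbf{x})$ with $S(x,p,y,q) = (x,-p,-y,q)$ immediately forces $p = -p$ and $y = -y$, hence $\mathcal{S} = \{(x,0,0,q) : x,q \in \mathbb{R}\}$. This is the only genuinely conceptual step, and it is trivial; everything that remains is a verification that the two regularized symmetry sets map onto this axis.

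For the forward inclusion $T_{1}(\mathcal{\hat{S}}) \subseteq \mathcal{S}$ I would substitute the defining constraints $\hat{x} = \hat{q} = 0$ of $\mathcal{\hat{S}}$ into \eqref{eq:T1-def}. The momentum component becomes $(\hat{x}\hat{p}-\hat{y}\hat{q})/(2(\hat{x}^2+\hat{y}^2)) = 0$ and the position component $2\hat{x}\hat{y} = 0$, so the image has vanishing $p$ and $y$ coordinates and therefore lies in $\mathcal{S}$; explicitly $T_{1}(0,\hat{p},\hat{y},0) = (\mu - \hat{y}^2,\,0,\,0,\,\hat{p}/(2\hat{y}))$. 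The computation for $T_{2}$ is identical: imposing $\tilde{y} = \tilde{p} = 0$ in \eqref{eq:T2-def} again annihilates the $p$ and $y$ components and yields $T_{2}(\tilde{x},0,0,\tilde{q}) = (\tilde{x}^2 + \mu - 1,\,0,\,0,\,\tilde{q}/(2\tilde{x}))$, which again lies in $\mathcal{S}$.

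For the reverse inclusion I would invert these relations: given a target $(x,0,0,q) \in \mathcal{S}$, the equations $\mu - \hat{y}^2 = x$ and $\hat{p}/(2\hat{y}) = q$ are solved by $\hat{y} = \pm\sqrt{\mu - x}$ and $\hat{p} = 2\hat{y}q$, producing a preimage in $\mathcal{\hat{S}}$, and similarly $\tilde{x} = \pm\sqrt{x - \mu + 1}$, $\tilde{q} = 2\tilde{x}q$ for $T_{2}$. The main obstacle, and the only place requiring genuine care, is the interaction between this inversion and the domain of definition of the charts. The Levi-Civita maps are two-to-one branched covers that are singular exactly on $\mathcal{C}_{1}$ and $\mathcal{C}_{2}$, so the square-root relations fix a preimage only up to sign and only where the radicand is nonnegative; concretely $\mathcal{\hat{S}}$ recovers the part of the axis with $x \le \mu$ (the side of $\mathcal{S}$ visible to the $m_{1}$ regularization) and $\mathcal{\tilde{S}}$ the part with $x \ge \mu - 1$. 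I would therefore organize the reverse inclusion around this branch structure, making explicit that the two sheets $\pm\hat{y}$ (respectively $\pm\tilde{x}$) collapse to the same point of $\mathcal{S}$, and excluding the collision configuration $\hat{z}=0$ (respectively $\tilde{z}=0$) where the chart is undefined. This purely algebraic bookkeeping, rather than any analytic estimate, is the crux of the argument.
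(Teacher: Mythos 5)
Your computations are correct, and your core argument is the same one the paper intends: the paper's entire proof is the one-line remark that the claim ``follows directly from the definition of $T_{1}$ and $T_{2}$'' (Equations \eqref{eq:T1-def} and \eqref{eq:T2-def}), which is precisely your forward substitution $T_{1}(0,\hat{p},\hat{y},0)=(\mu-\hat{y}^{2},0,0,\hat{p}/(2\hat{y}))$ and $T_{2}(\tilde{x},0,0,\tilde{q})=(\tilde{x}^{2}+\mu-1,0,0,\tilde{q}/(2\tilde{x}))$. Where you go beyond the paper is the reverse inclusion, and your analysis there is a genuine and correct refinement rather than mere bookkeeping: since $T_{1}$ is defined only on $U_{1}\setminus\mathcal{C}_{1}$, the inversion $\hat{y}=\pm\sqrt{\mu-x}$, $\hat{p}=2\hat{y}q$ requires $\hat{y}\neq 0$, so $T_{1}(\mathcal{\hat{S}})$ is only the portion $\left\{(x,0,0,q):x<\mu\right\}$ of $\mathcal{S}$ (strict inequality --- your ``$x\le\mu$'' should be sharpened, as you yourself implicitly do when excluding the collision configuration $\hat{z}=0$), and moreover the value $x=\mu-1$ is also absent from the image because its preimages $(0,\hat{p},\pm 1,0)$ lie outside $U_{1}$; symmetrically $T_{2}(\mathcal{\tilde{S}})$ covers only $x>\mu-1$ with $x\neq\mu$ excluded. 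So, read literally, the stated equalities $T_{1}(\mathcal{\hat{S}})=\mathcal{S}$ and $T_{2}(\mathcal{\tilde{S}})=\mathcal{S}$ fail, and the lemma should be understood either as the inclusions $T_{1}(\mathcal{\hat{S}})\subseteq\mathcal{S}$, $T_{2}(\mathcal{\tilde{S}})\subseteq\mathcal{S}$, or as equalities onto the accessible portions of the axis. This imprecision is harmless downstream: in Lemma \ref{lem:Lyap-existence} and the symmetry argument of Equation \eqref{eq:S-symm-periodic} only the forward inclusion is ever used --- one needs points of $\mathcal{\hat{S}}$ and $\mathcal{\tilde{S}}$ to land on the symmetry set, never the converse --- and your two-to-one branch observation (that $(0,\hat{p},\hat{y},0)$ and $(0,-\hat{p},-\hat{y},0)$ share an image) is likewise correct but not needed for anything the paper does with this lemma.
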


\begin{proof}
The proof follows directly from the definition of $T_{1}$ and $T_{2}$. (See Equations \eqref{eq:T1-def} and \eqref{eq:T2-def}.)
\end{proof}

The intuition behind the choice of $\mathcal{\hat{S}},$ $\mathcal{\tilde{S}}$
is seen in Figure \ref{fig:PCRTBP_coordinates}. From the figure we see that
the set $\mathcal{\hat{S}}$ is the vertical axis $\{\hat{x}=0\}$ and $\mathcal{\tilde{S}}$ is
the horizontal axis $\left\{ \tilde{y}=0\right\} $, which join the primaries
in the regularized coordinates.

To find the desired symmetric periodic orbits we fix an energy level $c\in \mathbb{R}$
and introduce an appropriate shooting operator, whose zero implies the
existence of an orbit with energy $c$. Slightly abusing notation,
let us first define two functions $\hat{p},\tilde{q}:\mathbb{R}%
^{2}\rightarrow \mathbb{R}$ as%
\begin{eqnarray*}
\hat{p}\left( \hat{y},c\right)  & := &\sqrt{4\hat{y}^{6}-8\mu \hat{y}%
^{4}+4(\mu -c)\hat{y}^{2}+\frac{8\mu \hat{y}^{2}}{\sqrt{\hat{y}^{4}+1-2\hat{y%
}^{2}}}+8(1-\mu )}, \\
\tilde{q}\left( \tilde{x},c\right)  &:=&\sqrt{4\tilde{x}^{6}-8(1-\mu )\tilde{%
x}^{4}+4\left( (1-\mu )-c\right) \tilde{x}^{2}+\frac{8(1-\mu )\tilde{x}^{2}}{%
\sqrt{\tilde{x}^{4}+1-2\tilde{x}^{2}}}+8\mu }.
\end{eqnarray*}%
Observe that from Equations \eqref{eq:reg_P_energy} and \eqref{eq:E2}
we have
\begin{align}
E_{1}^{c}\left( 0,\hat{p}\left( \hat{y},c\right) ,\hat{y},0\right) & =0,
\label{eq:pc-implicit} \\
E_{2}^{c}\left( \tilde{x},0,0,\tilde{q}\left( \tilde{x},c\right) \right) &
=0.  \label{eq:qc-implicit}
\end{align}%
Next, we define $P_{1}^{c},P_{2}^{c}:\mathbb{R}\rightarrow 
\mathbb{R}^{4}$ by
\begin{align*}
\hat{P}_{1}^{c}\left( \hat{y}\right) & :=\left( 0,\hat{p}\left( \hat{y}%
,c\right) ,\hat{y},0\right) , \\
\tilde{P}_{2}^{c}\left( \tilde{x}\right) & :=\left( \tilde{x},0,0,\tilde{q}%
\left( \tilde{x},c\right) \right),
\end{align*}%
and note that $P_{1}^{c}\left( \mathbb{R}\right) \subset \mathcal{\hat{S}}$ and $%
P_{2}^{c}\left( \mathbb{R}\right) \subset \mathcal{\tilde{S}}$. 
Taking 
\begin{equation*}
\mathbf{x}=(x_{0}, x_{1},\ldots ,x_{5},x_{6})\in 
\mathbb{R}\times \underset{5 \  \text{copies}}{\underbrace{\mathbb{R}^{4}\times \ldots \times 
\mathbb{R}^{4}}}\times \mathbb{R}=\mathbb{R}^{22}\mathbb{,}
\end{equation*}%
we define the shooting operator $F_{c}:\mathbb{R}^{24}\rightarrow 
\mathbb{R}^{24}$ as 
\begin{equation}
F_{c}\left( \mathbf{x},\tau ,\alpha \right) =\left( 
\begin{array}{r@{\,\,-\,\,}l}
\hat{P}_{1}^{c}\left( x_{0}\right) & x_{1} \\ 
\psi _{1}^{c}\left( x_{1},s\right) & x_{2} \\ 
T_{1}\left( x_{2}\right) & x_{3} \\ 
\phi _{\alpha }\left( x_{3},\tau \right) & x_{4} \\ 
T_{2}^{-1}\left( x_{4}\right) & x_{5} \\ 
\psi _{2}^{c}\left( x_{5},s\right) & \tilde{P}_{2}^{c}\left( x_{6}\right)%
\end{array}%
\right) .  \label{eq:Fc-choice}
\end{equation}%
We have the following result.

\begin{lemma}
\label{lem:Lyap-existence} Suppose that for $c\in \mathbb{R}$ we have
an $\mathbf{x}\left( c\right) \in\mathbb{R}^{22}$ and $\tau \left( c\right)
\in \mathbb{R}$ for which 
\begin{equation*}
F_{c}\left( \mathbf{x}\left( c\right) ,\tau \left( c\right) ,0\right) =0,
\end{equation*}
then we have one of the following three cases:

\begin{enumerate}
\item If $x_{0}\left( c\right) \neq 0$ and $x_{6}\left( c\right) \neq 0$,
then the orbit  through
$T_{1}( \hat{P}_{1}^{c}\left( x_{0}\left( c\right) \right) )$ 
is periodic.

\item If $x_{0}\left( c\right) =0$ and $x_{6}\left( c\right) \neq 0$, then
then the orbit through
$T_{1}( \hat{P}_{1}^{c}\left( x_{0}\left( c\right) \right) )$ is an ejection-collision with $m_1$. 

\item If $x_{0}\left( c\right) \neq 0$ and $x_{6}\left( c\right) =0$, then
then the orbit  through
$T_{1}( \hat{P}_{1}^{c}\left( x_{0}\left( c\right) \right) )$ is an ejection-collision with $m_2$. 

\end{enumerate}
\end{lemma}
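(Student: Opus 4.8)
The plan is to read off the geometric content of a zero of $F_c$ by unwinding the six block rows of \eqref{eq:Fc-choice}, and then to invoke the time-reversing symmetry \eqref{eq:symmetry-prop} exactly as in the computation \eqref{eq:S-symm-periodic}. Setting $\alpha=0$ (so that $\phi_0=\phi$ is the genuine PCRTBP flow), the condition $F_c(\mathbf{x}(c),\tau(c),0)=0$ forces the matchings $x_1=\hat P_1^c(x_0)$, $x_2=\psi_1^c(x_1,s)$, $x_3=T_1(x_2)$, $x_4=\phi(x_3,\tau)$, $x_5=T_2^{-1}(x_4)$ and $\psi_2^c(x_5,s)=\tilde P_2^c(x_6)$. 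Since $\hat P_1^c(\mathbb{R})\subset\mathcal{\hat S}$ and $\tilde P_2^c(\mathbb{R})\subset\mathcal{\tilde S}$, the preceding lemma ($T_1(\mathcal{\hat S})=T_2(\mathcal{\tilde S})=\mathcal{S}$) identifies the two endpoints $\mathbf{p}:=T_1(\hat P_1^c(x_0))$ and $\mathbf{q}:=T_2(\tilde P_2^c(x_6))$ as self-$S$-symmetric points, whenever the Levi-Civita charts are nonsingular there. First I would use Theorem \ref{thm:LeviCivitta} to convert the two regularized flow segments into honest segments of the PCRTBP flow and to conclude $\mathbf{q}=\phi(\mathbf{p},t^{\ast})$, where $t^{\ast}$ is the sum of $\tau$ and the two time-rescaling integrals of the type \eqref{eq:time-to-collision}.

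For Case 1, where $x_0(c)\neq0$ and $x_6(c)\neq0$, the regularized positions $(\hat x,\hat y)=(0,x_0)$ and $(\tilde x,\tilde y)=(x_6,0)$ avoid the collision subspaces $\mathcal{C}_1,\mathcal{C}_2$, so $\mathbf{p}$ and $\mathbf{q}$ are ordinary points of $\mathcal{S}$ and $\mathbf{q}=\phi(\mathbf{p},t^{\ast})$ holds classically. Applying \eqref{eq:S-symm-periodic} with $\mathbf{x}=\mathbf{p}$ and $\mathbf{y}=\mathbf{q}$ then gives $\phi(\mathbf{p},2t^{\ast})=\mathbf{p}$, so the orbit through $T_1(\hat P_1^c(x_0(c)))$ is periodic.

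The remaining two cases require care, and I expect them to be the main obstacle, since they are precisely where one Levi-Civita chart degenerates at an endpoint. When $x_0(c)=0$ a direct evaluation gives $\hat p(0,c)=\sqrt{8(1-\mu)}$, so $\hat P_1^c(0)=(0,\sqrt{8(1-\mu)},0,0)$ sits at the origin of the regularized $m_1$ frame and satisfies the collision relation \eqref{eq:collision-m1}; thus $\mathbf{p}$ is no longer an ordinary point of $\mathcal{S}$ but the collision with $m_1$, where $T_1$ is singular. Here I would work entirely in the regularized $m_1$ coordinates: the smooth orbit $\psi_1^c(\hat P_1^c(0),\cdot)$ leaves the origin as the regularized time increases, which by Theorem \ref{thm:LeviCivitta} is an ejection from $m_1$. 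This branch reaches the genuine symmetric point $\mathbf{q}$ (available since $x_6\neq0$), and the reflection identity $\phi(\mathbf{q},t)=S(\phi(\mathbf{q},-t))$ from \eqref{eq:symmetry-prop} shows that the forward orbit from $\mathbf{q}$ is the $S$-image of the backward ejection branch. Because $S$ fixes the position of $m_1$ while reversing the velocity, this image terminates in a collision with $m_1$; concatenating the two branches exhibits the orbit through $T_1(\hat P_1^c(0))$ as an ejection-collision with $m_1$, which is Case 2. Case 3 is the mirror image: when $x_6(c)=0$ one has $\tilde q(0,c)=\sqrt{8\mu}$, so $\tilde P_2^c(0)$ is a collision with $m_2$ satisfying \eqref{eq:collision-m2}, while $\mathbf{p}$ remains a regular symmetric point, and reflecting the half-orbit through $\mathbf{p}$ via \eqref{eq:symmetry-prop} produces an ejection-collision with $m_2$.

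The only genuinely delicate bookkeeping occurs at these collision endpoints, where $T_1$ or $T_2$ fails to be a diffeomorphism: one must pass to the regularized flow to make sense of the ejection/collision, verify through the explicit formulas for $\hat p$ and $\tilde q$ that the boundary parameterizations land on the physical collision circles \eqref{eq:collision-m1} and \eqref{eq:collision-m2} when their arguments vanish, and confirm that the involution $S$ interchanges ejection and collision at a given primary. Everything else reduces to the symmetry computation \eqref{eq:S-symm-periodic} together with the dictionary supplied by Theorem \ref{thm:LeviCivitta}. I note finally that the hypotheses impose no transversality requirement, so $DF_c$ need not be invertible for this lemma; non-degeneracy of $DF_c$ will enter only afterwards, when the implicit function theorem is used to continue the solution in the parameter $c$ and thereby produce the one-parameter tube.
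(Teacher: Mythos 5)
Your proposal is correct and follows essentially the same route as the paper: unwind the zero of $F_c$ from Equation \eqref{eq:Fc-choice}, use the energy conditions \eqref{eq:pc-implicit}--\eqref{eq:qc-implicit} together with Theorem \ref{thm:LeviCivitta} to identify the regularized segments with physical PCRTBP trajectories (including the ejection/collision interpretation when $x_0$ or $x_6$ vanishes), and close the orbit via the $S$-symmetry doubling argument of Equation \eqref{eq:S-symm-periodic} built into the shooting template. The paper compresses all of this into a two-sentence remark, so your write-up is simply the detailed version of the same argument, including the correct observation that no invertibility of $DF_c$ is needed at this stage.
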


\begin{proof}
The result follows immediately from the definition of $F_{c}$
in Equation \eqref{eq:Fc-choice} and from Theorem \ref{thm:LeviCivitta} (or the analogous theorem for $m_2$).  We highlight the fact that due to Equations \eqref{eq:pc-implicit}--\eqref{eq:qc-implicit} we have $E_{1}^{c}(\hat{P}%
_{1}^{c}\left( x_{0}\right) )=0$ and $E_{2}^{c}( \tilde{P}%
_{2}^{c}\left( x_{6}\right) ) =0$, so the trajectories in the
regularized coordinates correspond to the physical trajectories in the
physical coordinates of the PCRTBP. 
\end{proof}

We can use the implicit function theorem to compute the derivative of $\mathbf{x}%
\left( c\right) $ with respect to $c$. Let us write $\mathbf{y}\left(
c\right) :=\left( \mathbf{x}\left( c\right) ,\tau \left( c\right) ,\alpha
\left( c\right) \right) $ and suppose $F_c(\mathbf{y}(c))=0$. (Note that in fact we must also have that $\alpha \left( c\right) =0$ since $\alpha $ is unfolding.) Then $\frac{d}{dc}\mathbf{x}\left(
c\right) $ is computed from the first coordinates of the vector $\frac{d}{dc}%
\mathbf{y}\left( c\right) $ and is given by the formula
\begin{equation}
\frac{d}{dc}\mathbf{y}\left( c\right) =-\left( \frac{\partial F_{c}}{%
\partial \mathbf{y}}\right) ^{-1}\frac{\partial F_{c}}{\partial c}.
\label{eq:implicit-dx-dc}
\end{equation}


\begin{theorem}
\label{th:Lyap-through-collision}Assume that for $c\in \left[ c_{1},c_{2}%
\right] $ the functions $\mathbf{x}\left( c\right) $ and $\tau \left(
c\right) $ solve the implicit equation 
\begin{equation*}
F_{c}\left( \mathbf{x}\left( c\right) ,\tau \left( c\right) ,0\right) =0.
\end{equation*}
If%
\begin{eqnarray}
	\label{eq:Bolzano-condition-Lyap} 
x_{0}\left( c_{1}\right) >0>x_{0}\left( c_{2}\right) , \\
\label{eq:x6-nonzero} 
x_{6}\left( c\right) \neq 0\qquad \text{for all }c\in \left[ c_{1},c_{2}\right],
\end{eqnarray} 
and 
\begin{equation}
\frac{d}{dc}x_{0}\left( c\right) <0\qquad \text{for all }c\in \left[
c_{1},c_{2}\right] ,  \label{eq:der-cond-Lyap}
\end{equation}%
then there exists a unique energy parameter $c^{\ast }\in \left(
c_{1},c_{2}\right) $ for which we have have an intersection of the ejection
and collision manifolds of $m_{1}$. Moreover, for all remaining $c\in \left[
c_{1},c_{2}\right] \setminus \left\{ c^{\ast }\right\} $ the orbit of the point $%
T_{1}( \hat{P}_{1}^{c}\left( x_{0}\left( c\right) \right) ) $
is periodic.
\end{theorem}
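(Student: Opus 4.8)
The plan is to reduce everything to the trichotomy already established in Lemma \ref{lem:Lyap-existence}, applied along the assumed solution branch, so that the only genuine work is elementary one-variable calculus on the scalar function $c \mapsto x_0(c)$. The hypotheses grant that $\mathbf{x}(c)$ and $\tau(c)$ solve $F_c(\mathbf{x}(c),\tau(c),0)=0$ throughout $[c_1,c_2]$, and \eqref{eq:implicit-dx-dc} expresses their $c$-derivative via the implicit function theorem; in particular the component $x_0(c)$ is a continuous (indeed differentiable) real-valued function on $[c_1,c_2]$, which is all I need.

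First I would locate $c^*$. The Bolzano-type hypothesis \eqref{eq:Bolzano-condition-Lyap} gives $x_0(c_1)>0>x_0(c_2)$, so the intermediate value theorem produces some $c^*\in(c_1,c_2)$ with $x_0(c^*)=0$. The monotonicity hypothesis \eqref{eq:der-cond-Lyap}, namely $\frac{d}{dc}x_0(c)<0$ on all of $[c_1,c_2]$, makes $x_0$ strictly decreasing and hence injective, so this $c^*$ is the \emph{unique} zero of $x_0$ on the interval.

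Next I would read off the dynamical conclusion at each energy using Lemma \ref{lem:Lyap-existence}. The crucial role of hypothesis \eqref{eq:x6-nonzero} is twofold: $x_6(c)\neq0$ for every $c$ rules out case 3 of the lemma (ejection-collision with $m_2$) entirely, and it also guarantees that we never hit the uncovered doubly-degenerate configuration $x_0=x_6=0$. At $c=c^*$ we have $x_0(c^*)=0$ while $x_6(c^*)\neq0$, placing us squarely in case 2, so the orbit through $T_1(\hat{P}_1^{c^*}(x_0(c^*)))$ is an ejection-collision with $m_1$; equivalently the ejection and collision manifolds of $m_1$ intersect at this energy, and by uniqueness of the zero of $x_0$ no other $c$ does this. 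For every $c\in[c_1,c_2]\setminus\{c^*\}$, uniqueness forces $x_0(c)\neq0$, and together with $x_6(c)\neq0$ this is case 1, yielding a periodic orbit.

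I do not expect any substantial obstacle: the analytic content is front-loaded into Lemma \ref{lem:Lyap-existence} and into the assumed existence and differentiability of the branch $c\mapsto(\mathbf{x}(c),\tau(c))$. The only point deserving care is the logical bookkeeping, confirming that the three hypotheses \eqref{eq:Bolzano-condition-Lyap}--\eqref{eq:der-cond-Lyap} are exactly what is needed to (a) exclude the $m_2$-collision case via $x_6\neq0$, (b) force a sign change of $x_0$ producing a collision energy, and (c) guarantee that this collision energy is isolated, so that the rest of the tube consists genuinely of periodic orbits. In the computer-assisted realization, the differentiability and these three inequalities would be verified along a validated continuation of the branch using the implicit-derivative formula \eqref{eq:implicit-dx-dc}.
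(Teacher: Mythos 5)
Your proposal is correct and is essentially the paper's own argument, which is stated there in one line: the result "follows directly from the Bolzano theorem and Lemma \ref{lem:Lyap-existence}." Your write-up simply spells out the details the paper leaves implicit — IVT plus strict monotonicity of $x_0$ for existence and uniqueness of $c^*$, and the hypothesis $x_6(c)\neq 0$ to rule out case 3 of the lemma and force case 2 at $c^*$ and case 1 elsewhere.
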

\begin{proof}
The result follows directly from the Bolzano theorem and Lemma \ref%
{lem:Lyap-existence}.
\end{proof}

Theorem \ref{th:Lyap-through-collision} is deliberately formulated so that its hypotheses 
can be validated via computer assistance. Specifically, rigorous enclosures of Equation 
\eqref{eq:implicit-dx-dc} are rigorously computed and Equations 
\eqref{eq:Bolzano-condition-Lyap}-\eqref{eq:der-cond-Lyap} are rigorously verified using interval arithmetic.

\medskip

We finish this section with an example of a similar approach, which can be
used for the proofs of double collisions in the case when $m_{1}=m_{2}=\frac{%
1}{2}$. That is, we establish the existence of a family of periodic orbits,
parameterized by energy (the Jacobi constant), which are symmetric with respect to
the $y$-axis, and such that for a single parameter from the family we have a
double collision as in Figure \ref{fig:eq}.

\begin{figure}[tbp]
\begin{center}
\includegraphics[height=3.95cm]{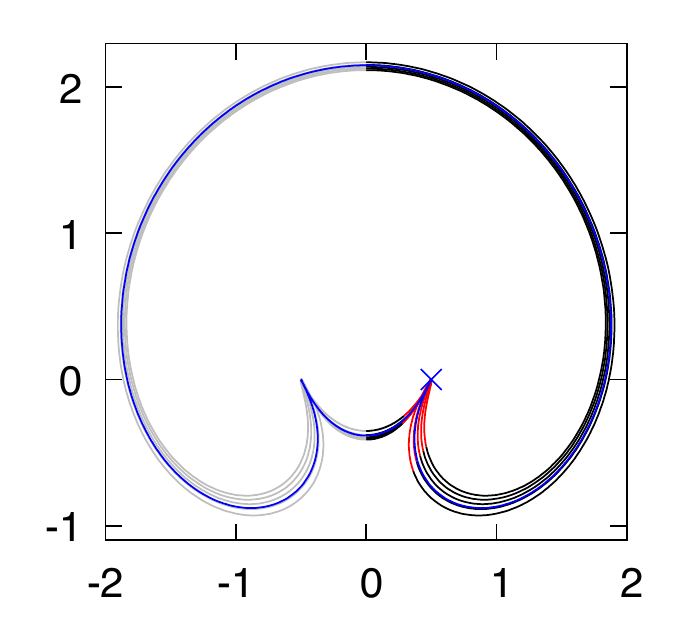} %
\includegraphics[height=3.95cm]{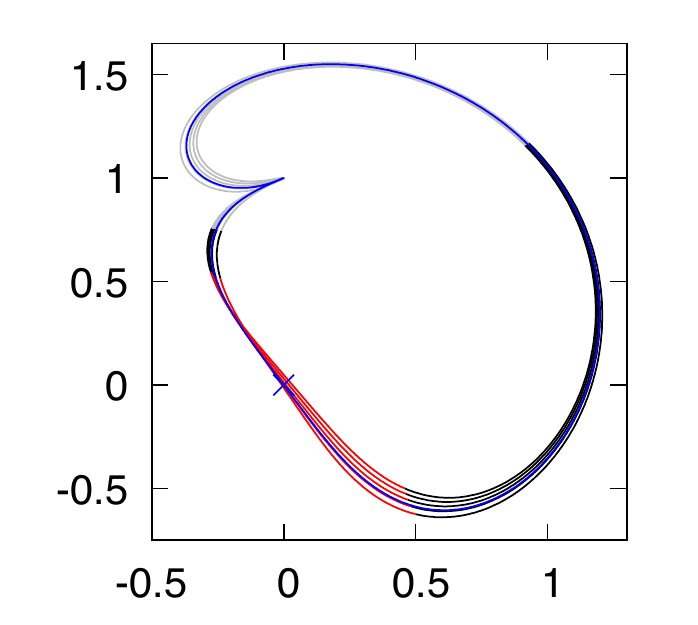}
\end{center}
\caption{A family of periodic orbits passing through a double collision. The
left figure is in the original coordinates and the right figure is in the
regularised coordinates at $m_{1}$. The trajectories computed in the
original coordinates are in black, the trajectories computed in the
regularized coordinates are in red, and the collision orbit is in blue. The
second half of an orbit, which follows from the $R$-symmetry, is depicted in
grey. The plots are for the system with equal masses.}
\label{fig:eq}
\end{figure}

%

In this case consider $R:\mathbb{R}^{4}\rightarrow \mathbb{R}^{4}$ defined as%
\begin{equation*}
R\left( x,p,y,q\right) =\left( -x,p,y,-q\right) .
\end{equation*}%
For the case of two equal masses, we have the time reversing symmetry 
\begin{equation}
R\left( \phi \left( \mathbf{x},t\right) \right) =\phi \left( R\left( \mathbf{%
x}\right) ,-t\right) .  \label{eq:R-symmetry.}
\end{equation}%
We denote by $\mathcal{R}$ the set of all points which are $R$-self
symmetric, i.e. $\mathcal{R}=\{\mathbf{x}=R\left( \mathbf{x}\right) \}$. An
argument mirroring Equation \eqref{eq:S-symm-periodic} shows that if 
two points $\mathbf{x},\mathbf{y}\in \mathcal{R}$ have $%
\mathbf{y}=\phi \left( \mathbf{x},t\right) ,$ then these points must lie on a
periodic orbit.

To obtain the existence of the family of orbits 
depicted in Figure \ref{fig:eq}, define $p:%
\mathbb{R}^{2}\rightarrow \mathbb{R}$ and $P_{1}^{c},P_{2}^{c}:\mathbb{R}%
\rightarrow \mathbb{R}^{4}$ as%
\begin{eqnarray*}
p\left( y,c\right)  &:=&\sqrt{2\Omega (0,y)-c}, \\
P_{1}^{c}\left( y\right)  &:=&\left( 0,p\left( y,c\right) ,y,0\right) , \\
P_{2}^{c}\left( y\right)  &:=&\left( 0,-p\left( y,c\right) ,y,0\right) .
\end{eqnarray*}%
Note that $P_{1}^{c}\left( y\right) ,P_{2}^{c}\left( y\right) \in \mathcal{R}
$ and $E\left( P_{1}^{c}\left( y\right) \right) =E\left( P_{2}^{c}\left(
y\right) \right) =c$ (see Equation \eqref{eq:JacobiIntegral}). Consider $%
x_{0},x_{7}\in \mathbb{R}$ and $x_{1},\ldots ,x_{6}\in \mathbb{R}^{4},$
where 
\begin{equation}
x_{4}=\left( s_{4},\hat{p}_{4},\hat{y}_{4},\hat{q}_{4}\right) \in \mathbb{R}%
^{4}. \label{eq:s4}
\end{equation}%
We emphasize that the first coordinate in $x_{4}$ will be used here in a
slightly less standard way than in the previous examples. We define also 
\begin{equation*}
\mathrm{\hat{x}}_{4}:=\left( 0,\hat{p}_{4},\hat{y}_{4},\hat{q}_{4}\right)
\in \mathbb{R}^{4}.
\end{equation*}%
%
%
We now choose some fixed $s_{2},s_{5}\in \mathbb{R}$, $s_{2},s_{5}>0$, and for
\begin{equation*}
\mathbf{x}=\left( x_{0},\ldots ,x_{7}\right) \in \mathbb{R}\times \underset{6%
}{\underbrace{\mathbb{R}^{4}\times \mathbb{\ldots }\times \mathbb{R}^{4}}}%
\times \mathbb{R}=\mathbb{R}^{26}
\end{equation*}%
define the operator $F_{c}:\mathbb{R}^{26}\times \mathbb{R}\times \mathbb{R%
}\rightarrow \mathbb{R}^{28}$ as 
\begin{equation}
F_{c}\left( \mathbf{x},\tau ,\alpha \right) =\left( 
\begin{array}{r@{\,\,-\,\,}l}
P_1^{c}\left( x_{0}\right)  & x_{1} \\ 
\phi _{\alpha }\left( x_{1},s_{2}\right)  & x_{2} \\ 
T_{1}^{-1}\left( x_{2}\right)  & x_{3} \\ 
\psi _{1}^{c}\left( x_{3},s_{4}\right)  & \mathrm{\hat{x}}_{4} \\ 
\psi _{1}^{c}\left( \mathrm{\hat{x}}_{4},s_{5}\right)  & x_{5} \\ 
T_{1}\left( x_{5}\right)  & x_{6} \\ 
\phi _{\alpha }\left( x_{6},\tau \right)  & P_2^{c}\left( x_{7}\right) 
\end{array}%
\right) .  \label{eq:Fc-equal}
\end{equation}

Note that in Equation \eqref{eq:Fc-equal} the $s_{2},s_{5}$ are some fixed
	parameters, and $s_{4}$ is one of the coordinates of $\mathbf{x}$. We claim that if $F_{c}\left( \mathbf{x},\tau ,0,0\right) =0$ and $\pi _{\hat{y}_{4}}%
\mathbf{x}=0$, then the orbit of $x_2$ passes through the collision with 
$m_{1}$.  This is because $\mathrm{\hat{x}}_{4}=\left( 0,\hat{p}_{4},\hat{y}
_{4},\hat{q}_{4}\right) $, so that $F_{c}=0$ ensures that the point 
$\psi _{1}^{c}\left( x_{3},s_{4}\right)$ is zero on the $\hat x_4$ coordinate.
So, if $F_{c}(\mathbf{x})=0$ and $\pi _{\hat{y}_{4}}\mathbf{x}=0$, 
then $\pi_{\hat x_4, \hat y_4}\psi _{1}^{c}\left( x_{3},s_{4}\right)=0$ and we arrive at the collision.
Moreover, by the $R$-symmetry of the system in this case we
also establish heteroclinic connections between collisions with 
$m_{1}$ and $m_{2}$ (see Figure \ref{fig:eq}). 

If on the other hand $F_{c}=0$ and $\pi _{\hat{y}_{4}}\mathbf{x}\neq 0,$ 
then we have a periodic orbit passing near
the collisions with $m_{1}$ and $m_{2}$.
One can prove a result analogous to Theorem \ref{th:Lyap-through-collision}
 with the minor difference being that instead of using $x_{0}$ in 
 Equations \eqref{eq:Bolzano-condition-Lyap} and \eqref{eq:der-cond-Lyap} we take $
\hat{y}_{4}.$ We omit the details in order not to repeat the same argument.



\section{Computer assisted proofs for collision/near collision orbits}

\label{sec:CAP}

\subsection{Newton-Krawczyk method}

For a smooth mapping $F : \mathbb{R}^n \to \mathbb{R}^n$, the following
theorem provides sufficient conditions for the existence of a solution of $%
F(x)=0$ in the neighborhood of a \textquotedblleft good
enough\textquotedblright\ approximate solution. The hypotheses of the
theorem require measuring the defect associated with the approximate
solution, as well as the quality of a certain condition number for an
approximate inverse of the derivative. Theorems of this kind are used widely
in computer assisted proofs, and we refer the interested reader to the works
of \cite{MR0231516,MR1100928,MR1057685,MR2807595,MR2652784,
MR3971222,MR3822720,jpjbReview} for a more complete overview.

Let $\left\Vert \cdot \right\Vert $ be a norm in $\mathbb{R}^{n}$ and let $%
\overline{B}(x_{0},r)\subset \mathbb{R}^{n}$ denote a closed ball of radius $r \geq 0$ centered at $x_0$  in that norm.

\begin{theorem}[Newton-Krawczyk]
\label{thm:NK} \label{thm:aPosteriori} Let $U\subset \mathbb{R}^{n}$ be an
open set and $F\colon U\rightarrow \mathbb{R}^{n}$ be at least of class $C^2$. Suppose
that $x_{0}\in U$ and let $A$ be a $n\times n$ matrix. Suppose that $Y,Z,r>0$
are positive constants such that $\overline{B}(x_{0},r)\subset U$ and 
\begin{eqnarray}
\Vert AF(x_{0})\Vert &\leq &Y,  \label{eq:Krawczyk-Y} \\
\sup_{x\in \overline{B}(x_{0},r)}\Vert \mathrm{Id}-ADF(x)\Vert &\leq &Z.
\label{eq:Krawczyk-Z}
\end{eqnarray}%
If  
\begin{equation}
Zr-r+Y\leq 0,  \label{eq:Krawczyk-ineq}
\end{equation}%
then there is a unique $\hat{x}\in \overline{B}(x_{0},r)$ for which $F(\hat{x%
})=0.$ Moreover, $DF(\hat{x})$ is invertible.
\end{theorem}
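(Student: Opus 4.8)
The plan is to recast the zero-finding problem as a fixed-point problem for the Newton--Krawczyk operator
\[
T(x) := x - AF(x),
\]
and to verify the hypotheses of the Banach contraction mapping theorem on the complete metric space $\overline{B}(x_0,r)$. First I would record the elementary consequence of Equation \eqref{eq:Krawczyk-ineq}: since $r>0$ and $Y>0$ we have $Zr \leq r - Y < r$, hence $Z<1$. This strict bound is what drives both the contraction estimate and the invertibility claims, so it should be isolated at the outset.

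Next I would establish that $A$ is invertible. Evaluating Equation \eqref{eq:Krawczyk-Z} at $x_0$ gives $\norm*{\mathrm{Id} - ADF(x_0)} \leq Z < 1$, so the Neumann series $\sum_{k\geq 0}(\mathrm{Id} - ADF(x_0))^k$ converges and furnishes an inverse of $ADF(x_0)$. Since $A$ and $DF(x_0)$ are square and their product is invertible, each factor is invertible; in particular so is $A$. This is the only place where the specific structure $ADF$ (rather than an abstract contraction factor) is used, and it is precisely what will let me pass from zeros of $AF$ back to zeros of $F$ at the end.

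With $A$ invertible I would then verify the two Banach hypotheses on the convex ball $\overline{B}(x_0,r)$. Since $DT(x) = \mathrm{Id} - ADF(x)$, Equation \eqref{eq:Krawczyk-Z} yields $\norm*{DT(x)} \leq Z$ for every $x \in \overline{B}(x_0,r)$, and the mean value inequality then gives $\norm*{T(x) - T(y)} \leq Z\norm*{x-y}$ on the ball; with $Z<1$ this makes $T$ a contraction. For the self-mapping property I would use $T(x_0) - x_0 = -AF(x_0)$ together with Equation \eqref{eq:Krawczyk-Y} and the contraction estimate to obtain, for $x \in \overline{B}(x_0,r)$,
\[
\norm*{T(x) - x_0} \leq \norm*{T(x) - T(x_0)} + \norm*{T(x_0) - x_0} \leq Zr + Y \leq r,
\]
where the last step is exactly Equation \eqref{eq:Krawczyk-ineq}. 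Hence $T$ maps $\overline{B}(x_0,r)$ into itself.

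Finally, the Banach fixed-point theorem applied to $T$ on the complete space $\overline{B}(x_0,r)$ produces a unique fixed point $\hat{x}$, which satisfies $AF(\hat{x}) = 0$; invertibility of $A$ upgrades this to $F(\hat{x}) = 0$, and since $F(x) = 0 \iff T(x) = x$ on the ball, uniqueness of the fixed point gives uniqueness of the zero. Invertibility of $DF(\hat{x})$ then follows by repeating the Neumann argument at $\hat{x}$, using $\norm*{\mathrm{Id} - ADF(\hat{x})} \leq Z < 1$. I expect no genuine obstacle: the argument is routine once $Z<1$ is extracted. The only step demanding care is the mean value inequality, which requires the convexity of $\overline{B}(x_0,r)$ and the $C^1$ regularity of $F$ (guaranteed, since $F$ is assumed $C^2$) so that $DT$ is continuous and the bound $\norm*{T(x)-T(y)} \leq \sup_{\xi \in \overline{B}(x_0,r)}\norm*{DT(\xi)}\,\norm*{x-y}$ is legitimate.
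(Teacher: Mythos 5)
Your proposal is correct and follows essentially the same route as the paper's own proof: extract $Z<1$ from the hypothesis, show the Newton operator $T(x)=x-AF(x)$ is a contraction mapping the ball into itself via the mean value inequality and the triangle inequality, apply the Banach fixed-point theorem, and use a Neumann series bound on $\mathrm{Id}-ADF$ to obtain invertibility of $A$ and $DF(\hat{x})$ and thereby pass from $AF(\hat{x})=0$ to $F(\hat{x})=0$. The only cosmetic difference is that you establish invertibility of $A$ at $x_0$ up front, whereas the paper deduces it at $\hat{x}$ after the fixed point is found; the substance is identical.
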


\begin{proof}
The proof is included in \ref{sec:proof} for the sake of completeness.
\end{proof}

\bigskip

The theorem is well suited for applications to computer assisted proofs. To
validate the assumptions its enough to compute interval enclosures of the
quantities $F(x_{0})$ and $DF(B)$, where $B$ is a suitable 
ball. These enclosures are done using interval arithmetic,
and the results are returned as sets (cubes in $\mathbb{R}^{n}$ and $\mathbb{%
R}^{n\times n}$) enclosing the correct values. A good choice for the matrix $%
A$ is any floating point approximate inverse of the derivative of $F$ at $%
x_{0}$, computed with standard linear algebra packages. The advantage of
working with such an approximation  is that there is no need to compute a rigorous interval enclosure of
a solution of a linear equation
(as in the interval Newton
method). In higher dimensional problems,
solving linear equations can lead to large overestimation (the so called 
``wrapping effect'').

In our work the evaluation of $F$ and its derivative involves integrating
ODEs and variational equations. There are well know general purpose
algorithms for solving these problems, and we refer the 
interested reader to \cite{c1Lohner,cnLohner,MR2807595}. 
For parameterizing the invariant manifolds attached to 
$L_4$ with interval enclosures, we exploit the techniques discussed in \cite{myNotes}
(validated integration is also discussed in this reference).

We remark that our implementations use the IntLab laboratory running under MatLab\footnote{%
https://www.tuhh.de/ti3/rump/intlab/} and/or the CAPD\footnote{%
Computer Assisted Proofs in Dynamics, http://capd.ii.uj.edu.pl} C\texttt{++}
library, and recall that the source codes are found at the homepage of MC.
See \cite{Ru99a} and \cite{CAPD_paper} as references for the usage
and the functionality of the libraries.

\subsection{Computer assisted existence proofs for ejection-collision orbits}
\label{sec:EC}

The methodology of Section \ref{sec:ejectionToCollision}, and
especially Lemma \ref{lem:collision-connections}, is combined with Theorem \ref{thm:NK}
to obtain the following.

\begin{maintheorem}\label{thm:ejectionCollision}
\label{thm:CAP-ejCol} Consider the 
planar PCRTBP with $\mu = 1/4$ 
and $c = 3.2$.  Let 
\[
\overline{p} = 
\left(
\begin{array}{c}
-0.564897282072410 \\ 
\phantom{-}0.978399619177283 \\
-0.099609551141525 \\
-0.751696444982537
\end{array}
\right), 
\]
\[
r = 2.7 \times 10^{-13}, 
\]
and 
\[
B_r = \left\{ x \in \mathbb{R}^4 \, : \|x - \overline{p}\| \leq r\right\},
\]
where the norm is the maximum norm on components. 
Then, there exists a unique $p_* \in B_r$ such that the orbit of $p_*$
is ejected from $m_2$ (at $x = -1 + \mu, y= 0$),  collides with  $m_1$ (at $x = \mu, y= 0$), and 
\correction{comment 10}{
the total time $T$ (in synodic/un-regularized coordinates)
 from ejection to collision satisfies
\begin{equation*}
2.42710599795 \leq T \leq 2.42710599796.
\end{equation*}
}


In addition, the ejection manifold of $m_2$
intersects the collision manifold of $m_1$ transversely 
along the orbit of $p_*$, where transversality is 
relative to the level set $\setof*{E = 3.2}$. Moreover, there exists a transverse $S$-symmetric counterpart ejected from $m_1$ and colliding with $m_2$.
\end{maintheorem}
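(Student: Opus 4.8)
The plan is to realize the ejection-collision orbit as a nondegenerate zero of the multiple shooting operator of Section~\ref{sec:ejectionToCollision}, and then validate a precomputed numerical approximation with the Newton--Krawczyk theorem. Concretely, I would instantiate the operator $F$ of Equation~\eqref{eq:collisionOperator} with $k=2$ and $l=1$, so that the boundary condition $P_2(x_0)$ sits on the regularized collision circle of $m_2$ and $P_1(x_6)$ on that of $m_1$. As permitted by Remark~\ref{rem:additionalShooting}, I would subdivide the flow steps $\phi_\alpha(\cdot,\tau)$ and $\psi_i^c(\cdot,s)$ into enough intermediate shooting segments to keep the validated integration error controllable; this enlarges the domain to some $\mathbb{R}^N$ but changes neither the zero set nor the transversality conclusion. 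The given vector $\overline{p}$ is the approximate value of the synodic-coordinate component $x_3$ of a numerically computed zero $\bar{\mathbf{x}}$, and the true orbit point $p_*$ will be the corresponding component $x_3^*$ of the validated zero.

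Next I would run the a posteriori argument of Theorem~\ref{thm:NK}. Taking $A$ to be a floating-point approximate inverse of $DF(\bar{\mathbf{x}})$, I would compute interval enclosures of the defect $AF(\bar{\mathbf{x}})$ and of $\sup_{x\in\overline{B}}\|\mathrm{Id}-ADF(x)\|$ over the ball of radius $r=2.7\times10^{-13}$, obtaining constants $Y$ and $Z$, and verify the contraction inequality $Zr-r+Y\le 0$. Evaluating $F$ and $DF$ requires validated integration of the regularized fields $f_1^c,f_2^c$ (Equations~\eqref{eq:regularizedSystem_m1}, \eqref{eq:regularizedSystem_m2}) and of the unfolded synodic field $f_\alpha$ (Equation~\eqref{eq:unfoldedPCRTBP}) together with their variational equations, plus rigorous enclosures of the coordinate maps $T_1,T_2^{-1}$. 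Once the inequality holds, Theorem~\ref{thm:NK} supplies a unique zero $\mathbf{x}^*\in\overline{B}(\bar{\mathbf{x}},r)$ with $DF(\mathbf{x}^*)$ invertible; because $\alpha$ is an unfolding parameter for the composite map (as established in the proof of Lemma~\ref{lem:collision-connections}), this zero automatically has $\alpha^*=0$.

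With a nondegenerate zero in hand, Lemma~\ref{lem:collision-connections} does the geometric bookkeeping: invertibility of $DF(\mathbf{x}^*,\tau^*,0)$ together with $F(\mathbf{x}^*,\tau^*,0)=0$ implies that the orbit of the synodic point $p_*:=x_3^*$ is ejected from $m_2$ and collides with $m_1$, and that the ejection manifold of $m_2$ meets the collision manifold of $m_1$ transversally inside $\{E=3.2\}$. For the travel time I would use the formula in Equation~\eqref{eq:time-between-collisions}: I would obtain validated enclosures of the two regularized-time integrals $4\int_0^s\|\pi_{x_i,y_i}\psi_i^c(\cdot,u)\|^2\,du$ as byproducts of the same integrator, add the enclosure of $\tau^*$, and tighten the subdivisions until the resulting interval lies inside $[2.42710599795,\,2.42710599796]$.

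Finally, the $S$-symmetric counterpart follows without a second computer-assisted proof. The Jacobi integral satisfies $E\circ S=E$ (since $\Omega$ depends on $y$ only through $y^2$), and both primaries lie on the fixed-point axis $\{y=0\}$ of $S$, so each primary is fixed by $S$. Being a time-reversing involution (Equation~\eqref{eq:symmetry-prop}) that preserves the level set $\{E=3.2\}$, $S$ carries the transverse intersection of the ejection manifold of $m_2$ with the collision manifold of $m_1$ to a transverse intersection of the collision manifold of $m_2$ with the ejection manifold of $m_1$; the image orbit is therefore ejected from $m_1$ and collides with $m_2$, transversally in the same energy level. I expect the decisive obstacle to lie entirely in the validated integration: making $Y$ and $Z$ sharp enough for the contraction to hold at the extremely small radius $r\approx 2.7\times10^{-13}$ demands tight control of the wrapping effect through the long composition of flow maps and coordinate changes $T_1,T_2^{-1}$, together with careful treatment of the remaining $((\,\cdot\,)^2+1+2(\,\cdot\,))^{-3/2}$ term in the regularized fields, which encodes the \emph{other}, unregularized primary and can become stiff whenever the orbit passes near it.
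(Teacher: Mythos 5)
Your proposal is correct and takes essentially the same approach as the paper's proof: the shooting operator of Equation \eqref{eq:collisionOperator} with $k=2$, $l=1$ and subdivided steps (Remark \ref{rem:additionalShooting}), validated via the Newton--Krawczyk Theorem \ref{thm:NK}, with the ejection-collision and transversality conclusions supplied by Lemma \ref{lem:collision-connections}, the time-of-flight bound from Equation \eqref{eq:time-between-collisions}, and the reverse orbit from the $S$-symmetry. The only cosmetic deviations are that the paper evaluates the $Z$ bound on a slightly larger ball of radius $r_*=2\times 10^{-12}$ before shrinking to $r=Y/(1-Z)\le 2.7\times 10^{-13}$, and that with $k=2$, $l=1$ the coordinate changes entering $F$ are $T_2$ and $T_1^{-1}$ (rather than $T_1$ and $T_2^{-1}$ as you wrote).
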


\begin{proof}
The first step in the proof is to define an appropriate version of the map $%
F $ in Equation \eqref{eq:collisionOperator},
whose zeros correspond to ejection-collision
orbits from $m_2$ to $m_1$.
In particular we set $k = 2$
and $l = 1$, and choose (somewhat arbitrarily) the parameter $s = 0.35$ in
the definition of the component maps $R_{\tau, \alpha}^1$ and $R_{\tau,
\alpha}^5$. The parameter $s$ determines how long to integrate/flow in the
regularized coordinates.

Next we compute an approximate zero 
$\overline{x} \in \mathbb{R}^{24}$ 
of $F$ using Newton's method.  Note that interval arithmetic is not 
required in this step. The resulting numerical data is recorded in Table 
\ref{table:th1}, and we note that $\overline{x}_3$ in the table 
corresponds to $\overline{p}$ in the hypothesis of the theorem.
Note also that we take $\bar\alpha$ in the approximate solution to be zero.

\begin{table}[tbp]
{\scriptsize 
\begin{tabular}{cllll}
\hline
$\overline{x}_0 = $ & $\phantom{(-}2.945584780500716$ &  &  &    \\ 
$\overline{x}_1 = $ & $(\phantom{-} 0.0,$ & $-1.387134030283961,$ & $\phantom{-}0.0,$ & $%
\phantom{-}0.275425456390970)$   \\ 
$\overline{x}_2 = $ & $(-0.444581369966432,$ & $-1.038375926396089,$
& $\phantom{-}0.112026231721142,$ & $\phantom{-}0.449167625710802)$   \\ 
$\overline{x}_3 = $ & $(-0.564897282072410,$ & $\phantom{-}0.978399619177283,$ & 
$-0.099609551141525,$ & $-0.751696444982537)$   \\ 
$\overline{x}_4 = $ & $(-0.244097430449606,$ & $\phantom{-}0.878139982728136,$ & $-0.025435855606099,$ & $\phantom{-}0.543608549989376)$   \\ 
$\overline{x}_5 = $ & $( \phantom{-}0.018086991443589,$ & $-0.732714475912918,$
& $-0.703153304556756,$ & $\phantom{-}1.254598547822042)$   \\ 
$\overline{x}_6 = $ & $\phantom{(-}1.459760691418490$ &  &  &    \\ 
$\overline{\tau} = $ & $\phantom{(-}2.051635871465197$ &  &  &    \\ 
$\overline{\alpha} = $ & $\phantom{(-}0.0$ &  &  &   \\
\hline 
\end{tabular}
}
\caption{ Numerical data used in the proof of Theorem \protect\ref%
{thm:CAP-ejCol}, giving the approximate solution of $F=0$ for the operator \eqref{eq:collisionOperator},
whose zeros correspond to the ejection-collision orbits from $m_2$ to $m_1$. 
We set the mass ratio to $\mu = 1/4$ 
and Jacobi constant to $c = 3.2$.
The resulting orbit is illustrated in Figure \ref{fig:ejectionCollisions} (bottom curve).
\label{table:th1}\label{tab:ejColTab1} }
\end{table}

We define $A$ to be the numerically computed approximate inverse of $DF(%
\overline{x})$, and let 
\begin{equation*}
B = \overline{B}(\overline{x}, r_*),
\end{equation*}
denote the closed ball of radius 
\begin{equation*}
r_* =  2\times 10^{-12},
\end{equation*}
in the maximum norm about the numerical approximation. 
(The reader interested in the numerical entries of the Matrix can 
run the accompanying computer program).  We note that the choice of 
$r_*$ is somewhat arbitrary. (It should be small enough that there is not 
too much ``wrapping'', but not so small that there is no $r \leq r_*$
satisfying the hypothesis of Theorem \ref{thm:NK}).

Using interval
arithmetic and validated numerical integration we compute an interval
enclosure of the length $24$ vector of intervals $\mathbf{F}$ having 
\begin{equation*}
F(\overline{x}) \in \mathbf{F},
\end{equation*}
and an interval enclosure of a $24 \times 24$ interval matrix $\mathbf{M}$
with 
\begin{equation*}
DF(x) \in \mathbf{M} \quad \quad \mbox{for all } x \in B.
\end{equation*}
We then check, again using interval arithmetic, that 
\begin{equation*}
\|A \mathbf{F} \| \in   10^{-12} \times 
[   0.0,   0.26850976470521]
\end{equation*}
and that 
\begin{equation*}
\|\mbox{Id} - A \mathbf{M} \| \in 10^{-7} \times 
[   0.0,   0.23119622467860]. 
\end{equation*}
From these we have 
\begin{equation*}
\|A F(\overline{x}) \| \leq Y < 0.269 \times 10^{-12}
\end{equation*}
and 
\begin{equation*}
\sup_{x \in B} \|\mbox{Id} - A DF(x)\|\leq Z < 0.232 \times 10^{-7},
\end{equation*}
though the actual bounds stored in the computer are tighter than those
just reported (hence the inequality).

We let
\begin{equation*}
r = \sup\left( \frac{Y}{1- Z}\right) \leq 2.7 \times 10^{-13},
\end{equation*}
and note again that the actual bound stored in the computer is 
smaller than reported here.  
We then check, using interval arithmetic, that 
\begin{equation*}
Z r - r + Y \leq  - 5.048 \times 10^{-29} < 0.
\end{equation*}
We also note that, since $r \leq r_*$, we have that $\overline{B}(\overline{%
x}, r) \subset B$, so that 
\begin{equation*}
\sup_{x \in \overline{B}(\overline{x}, r)} \| \mbox{Id} - A DF(x)\|
\leq Z,
\end{equation*}
on the smaller ball as well.

From this we conclude, via Theorem \ref{thm:NK}, that there exists a unique $%
x_* \in \overline{B}(\overline{x}, r) \subset \mathbb{R}^{24}$
so that $F(x_*) =0$, and moreover that $DF(x_*)$ is invertible. 
Hence, it now follows from Lemma \ref{lem:collision-connections} that there exists a transverse ejection-collision 
from $m_2$ to $m_1$ in the PCRTBP.

Note that the integration time in the standard coordinates
\begin{equation*}
\bar \tau = 2.051635871465197,
\end{equation*}
is one of the variables of $F$ (we are simply reading this off the table).
The rescaled integration time in the regularized coordinates is fixed to be $%
s = 0.35$. Our programs compute validated bounds on the integrals in Equation %
\eqref{eq:time-between-collisions} and provide interval enclosures for the time each orbit spends in the regularized coordinate systems of $%
m_1$ and $m_2$ respectively.  This interval enclosure is 
\begin{equation*}
T_1 + T_2 \in [ 0.27116751585137, 0.27116751585615] +  [ 0.10430261063473, 0.10430261063793].
\end{equation*}
Since the true integration time $\tau_*$ is in an $r$-neighborhood 
of $\bar\tau$ it follows that 
\begin{equation*}
\tau_* \in [ 2.05163587146492, 2.05163587146547].
\end{equation*}
Interval addition of the three time intervals 
containing $T_1$, $T_2$ and $\tau_*$
provides the desired final bound on the total time of flight
given in the theorem.  

The connection in the other direction follows from the $S$-symmetry of the system (see Equation \eqref{eq:symmetry-prop}). The computational part of the proof is implemented in IntLab running under MatLab, and took 21 minutes
to run on a standard desktop computer.
\end{proof}
\medskip

The orbit whose existence is proven in Theorem \ref{thm:ejectionCollision} is illustrated in Figure \ref{fig:ejectionCollisions} (lower orbit of the two orbits illustrated in the figure).  The higher orbit follows from the $S$-symmetry of the PCRTBP.  
We remark that our implementation actually subdivides the time steps $s = 0.35$  in regularized 
coordinates 50 times, while the time step $\bar \tau$ is subdivided 200 times.  
This only enlarges the size of 
the system of equations as discussed in Remark \ref{rem:additionalShooting}.

Validation of the $50 + 200 + 50 = 300$ steps of Taylor integration, along with the spatial and 
parametric variational equations, takes most of the computational
time for the proof. 
The choice of the mass $\mu = 1/4$ and the energy $c = 3.2$ was more or less arbitrary and the existence of many similar orbits could be proven using the same method.

\subsection{Connections between ejections/collisions and the libration
points $L_4$, $L_5$} \label{sec:EC_to_L4_proofs}

We apply the methodology of Section \ref{sec:L4_to_collision}, and
especially Lemma \ref{lem:Li-collisions}, in conjunction with Theorem \ref%
{thm:NK} to obtain the following result. The
local stable (or unstable) manifolds at $L_4$ are computed 
using the methods and implementation of \cite{MR3906230}.
See \ref{sec:manifolds} for a few additional remarks concerning the parameterizations.

\begin{maintheorem}
\label{thm:CAP-L4-to-collision} Consider planar PCRTBP with $\mu = 1/2$ 
and $c = 3$ is the energy of $L_4$.  Let 
\[
\overline{p} = \left(
\begin{array}{c}
 \phantom{-}0.003213450375413 \\
 \phantom{-}0.197716496638868 \\
 -0.404375730348827 \\
   \phantom{-}0.696149210661807 \\
\end{array}
\right),
\]
\[
r = 8.2 \times 10^{-12},
\]
and 
\[
B_r = \left\{ x \in \mathbb{R}^4 \, \colon \, \| x - \bar{p}\| \leq r \right\}.
\]

Then there exists a unique point
\[
p_* \in B_r
\]
such that the orbit of $p_*$ accumulates to $L_4$ as $t \to - \infty$, collides with 
$m_1$ (located at $x = \mu, y= 0$) in finite forward time, and the unstable manifold of $L_4$ intersects 
the collision set of $m_1$ transversely along the orbit of
$p_*$, where transversality is relative to level set 
$\setof*{E = 3}$.

\end{maintheorem}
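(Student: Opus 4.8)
The plan is to follow the a posteriori strategy used for Theorem \ref{thm:CAP-ejCol}, now applied to the operator $F_{1,4}^{u}$ of Equation \eqref{eq:EC_to_L4_operator} with $i=1$, $j=4$, and $\kappa=u$. First I would fix the regularized flight time (taking $s=0.5$, as in Figure \ref{fig:EC_to_collision}) and set $c_{4}=E(L_{4})=3$. The essential new ingredient, relative to the ejection-collision proof, is a validated parameterization $w_{4}^{u}\colon\overline{B}\to\mathbb{R}^{4}$ of the two-dimensional local unstable manifold of $L_{4}$, from which $P_{4}^{u}(\theta)=w_{4}^{u}(\cos\theta,\sin\theta)$ supplies the boundary condition on the first component. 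Since $\mu=1/2>\mu_{*}$, the equilibrium $L_{4}$ is a saddle-focus with unstable eigenvalues $\lambda=\alpha\pm i\beta$, so $w_{4}^{u}$ is obtained via the parameterization method of \cite{MR3906230}, which conjugates the restricted dynamics to the linear map on $\overline{B}$ and returns an interval enclosure of the Taylor coefficients together with a rigorous bound on the truncation tail.

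With the manifold chart in hand I would compute a numerical approximate zero $\overline{x}\in\mathbb{R}^{16}$ of $F_{1,4}^{u}$ by Newton's method, recording $\overline{x}_{1}$ as the candidate $\overline{p}$ in the statement and taking $\overline{\alpha}=0$. Next I would set $A$ to be a floating-point approximate inverse of $DF_{1,4}^{u}(\overline{x})$ and fix a ball $B=\overline{B}(\overline{x},r_{*})$ of radius slightly larger than the anticipated error. Using interval arithmetic and validated integration of the unfolded flow $\phi_{\alpha}$ and of the regularized flow $\psi_{1}^{c_{4}}$, together with validated evaluation of $T_{1}^{-1}$ and of the manifold parameterization and all their relevant derivatives (including the parametric variational equation in $\alpha$), I would produce constants $Y$ and $Z$ with $\|AF_{1,4}^{u}(\overline{x})\|\le Y$ and $\sup_{x\in B}\|\mathrm{Id}-ADF_{1,4}^{u}(x)\|\le Z$, then choose $r\le r_{*}$ (namely $r=8.2\times10^{-12}$) and verify the contraction inequality $Zr-r+Y\le0$ with interval arithmetic.

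By Theorem \ref{thm:NK} this yields a unique $x_{*}\in\overline{B}(\overline{x},r)\subset B_{r}$ with $F_{1,4}^{u}(x_{*})=0$ and $DF_{1,4}^{u}(x_{*})$ invertible. Case $\kappa=u$ of Lemma \ref{lem:Li-collisions} then promotes this non-degenerate zero to the desired conclusion: the orbit of $p_{*}=x_{1}^{\ast}$ lies on $W^{u}(L_{4})$, hence accumulates to $L_{4}$ as $t\to-\infty$, reaches the collision set of $m_{1}$ in finite forward time, and the intersection of $W^{u}(L_{4})$ with the collision manifold is transverse in the level set $\{E=3\}$. The unfolding-parameter mechanics guaranteeing $\overline{\alpha}=0$ and the transversality splitting were established once and for all in Theorem \ref{th:single-shooting} and Lemma \ref{lem:multiple-shooting-2}, so nothing of that sort needs to be re-proved here.

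The main obstacle I anticipate is not the shooting equation itself but the validated enclosure of the unstable manifold at the saddle-focus. Because the eigenvalues are genuinely complex, the parameterization method must control cross-resonances and the growth of the two-variable Taylor coefficients, and the truncation-error bound must be tight enough that the manifold data does not dominate the $Y$ and $Z$ constants. A secondary difficulty is the familiar wrapping effect in the validated integration over the orbit segment between the manifold boundary and the handover to regularized coordinates; I expect this to be controlled, as in Theorem \ref{thm:CAP-ejCol}, by subdividing the time steps in the spirit of Remark \ref{rem:additionalShooting}, which only enlarges the system without changing its zeros or the transversality conclusion.
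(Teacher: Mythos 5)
Your proposal is correct and follows essentially the same route as the paper's own proof: the same operator $F_{1,4}^{u}$ from Equation \eqref{eq:EC_to_L4_operator} with $s=0.5$, a non-rigorous Newton step to get the data of Table \ref{tab:L4-to-collisionTab1}, the Newton--Krawczyk Theorem \ref{thm:NK} with an approximate inverse $A$ and interval enclosures of $F$ and $DF$ on a slightly larger ball (the paper uses $r_*=5\times 10^{-9}$), and finally Lemma \ref{lem:Li-collisions} with $\kappa=u$ to convert the non-degenerate zero into the transverse $L_4$-to-collision orbit, with the manifold enclosure supplied by the parameterization method of \cite{MR3906230}. The only cosmetic slip is writing $\overline{B}(\overline{x},r)\subset B_r$, which conflates the ball in $\mathbb{R}^{16}$ with its projection onto the $x_1$-component in $\mathbb{R}^{4}$; the intended statement (that the $x_1$-coordinates of $x_*$ lie in $B_r$) is what the max norm gives.
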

\begin{table}[t!] 
 {\scriptsize 
\begin{tabular}{cllll}
\hline
$\overline{x}_0 = $ & $\phantom{(-}0.329444389425640$ &  &  &    \\ 
$\overline{x}_1 = $ 
& $( -0.032305434322402,$ &
  $-0.044152238388004,$ &   $\phantom{-}0.843244687835647,$ &   $0.005057045291404)$   \\ 
$\overline{x}_2 = $ & $(  \phantom{-}0.003213450375413,$ &
   $\phantom{-}0.197716496638868,$ &
  $-0.404375730348827,$ &
   $0.696149210661807)$   \\ 
$\overline{x}_3 = $ & $ (  \phantom{-}0.268116630482827,$ &
  $-0.943915863314079,$ &
  $-0.754104155383092,$ &
   $0.671496024758153)$   \\ 
$\overline{x}_4 = $ & $\phantom{(-}1.696671399505923$ &  &  &    \\ 
$\overline{\tau} = $ & $\phantom{(-}7.034349085576677$ &  &  &    \\ 
$\overline{\alpha} = $ & $\phantom{(-}0.0$ &  &  &    \\ 
\hline
\end{tabular}}
\caption{ Numerical data providing an approximate zero of the 
 map $F^u_{i,j}$ defined in Equation \eqref{eq:EC_to_L4_operator}, 
for $i = 1$, $j= 4$, $c=3$, $\mu=1/2$ and $s = 0.5$.
The data is used in the proof of Theorem \protect\ref{thm:CAP-L4-to-collision}, 
and results in the existence of the $L_4$ to collision orbit illustrated in the right frame of 
Figure \ref{fig:EC_to_collision}.  \label{tab:L4-to-collisionTab1}  }
\end{table}
\begin{proof}
The proof is similar to the proof of Theorem \ref{thm:ejectionCollision}, 
and we only sketch the argument.  Orbits accumulating 
to $L_4$ in backward time and colliding with $m_1$ are equivalent to 
zeros of the mapping 
$F^u_{i,j}$ defined in Equation \eqref{eq:EC_to_L4_operator} with $j = 4$
and $i = 1$.  We also set the parameter $s = 0.5$, which is the integration time
in the regularized coordinates.  

The first step is to compute a numerical zero 
$\bar{x} = (\bar x_0,\bar x_1,\bar x_2,\bar x_3,\bar x_4,\bar \tau,\bar \alpha) \in \mathbb{R}^{16}$ of $F^u_{i,j}$.  
This step exploits Newton's method (no interval arithmetic necessary), and  
the resulting data is reported in Table \ref{tab:L4-to-collisionTab1}.
Note that $\bar x_1 \in \mathbb{R}^4$ from the table is the initial condition $\bar{p}$
in the statement of the theorem.
 We take $A$ to be a numerically computed approximate
inverse of the $16 \times 16$ matrix $DF_{i,j}^u(\bar{x})$.  Again, the
definition of $A$ does not require interval arithmetic. 

For the next step we compute interval enclosures of $F(\bar{x})$
and of $DF_{i,j}^u(x)$ for $x$ in a cube of radius 
$r_* = 5 \times 10^{-9}$ and obtain that 
\[
\|A F(\bar{x})\| \in  10^{-11} \times 
[   0.0,   0.82147145471154],
\]
and that 
\[
\sup_{x \in B_{r_*}(\bar{x})} \| \mbox{Id} - A D F_{i,j}^u(x) \| 
\in [   0.0,   0.00151459031904]. 
\]
Using interval arithmetic we compute 
\[
r = \frac{Y}{1-Z} \leq  
 8.3 \times 10^{-12},
\]
where the actual value stored in the computer is smaller than reported
here (and hence the inequality).  We then check, using interval arithmetic, 
that $Z r - r + Y < 0$.  Since $r < r_*$, we have that 
there exists a unique $x_* \in B_r(\bar{x})$ so that 
$F_{i,j}^u(x_*) = 0$.  
Moreover, transversality follows from the non-degeneracy of the derivative
of $F_{i,j}^u$.  

The proof is
implemented in IntLab running under MatLab, and took about 30 minutes
to run on a standard desktop computer.
\end{proof}

By replacing the operator $F_{i,j}^u$ with 
 the operator $F^s_{i,j}$ 
defined in Equation \eqref{eq:EC_to_L4_operator}, again with $j = 4$
and $i = 1$, we obtain a nonlinear map whose
zeros correspond to ejection-to-$L_4$ orbits. 
We compute an approximate numerical
zero of the resulting operator (the numerical data is given in Table 
\ref{tab:m1-to-L4_Tab3}) and repeat a nearly identical argument to
that above.   This results in the existence of a transverse
ejection-to-$L_4$ orbit in the PCRTBP with $\mu = 1/4$ and $c=3$. 
The validated error bound for the numerical data has 
\[
r \leq 1.8 \times 10^{-11},
\]
so that the desired orbit passes with in an $r$-neighborhood of the point 
\[
\bar{p} = 
\left(
\begin{array}{c}
 -0.112449038686947 \\
  -0.553321424594493 \\
  \phantom{-} 0.308527098616200 \\
   \phantom{-}0.727049637558896 \\
\end{array}
\right).
\]
In this way we prove the existence of both the orbits illustrated in Figure \ref%
{fig:EC_to_collision}.  More precisely, the orbit whose existence is established in 
Theorem \ref{thm:CAP-L4-to-collision} is illustrated in the right frame of the figure, 
and the orbit discussed in the preceding remarks is illustrated in the left frame.

\begin{table}[t!]
 {\scriptsize
\begin{tabular}{cllll}
\hline
$\overline{x}_0 = $ & $\phantom{(-}1.561515178070094$ &  &  &    \\ 
$\overline{x}_1 = $ 
& $( \phantom{-}0.0,$ & $  \phantom{-}0.018562030958889,$ & $0.0,$ & $
 1.999913860896684)$   \\ 
$\overline{x}_2 = $ 
& $( \phantom{-}0.191471460280817,$ & $\phantom{-}0.959639244531484,$ &
   $0.805673853857139,$ & $1.170011720749615)$   \\ 
$\overline{x}_3 = $ 
& $(-0.112449038686946,$ & $-0.553321424594493,$ &
   $0.308527098616200,$ & $0.727049637558895)$   \\ 
$\overline{x}_4 = $ 
& $\phantom{(-}5.229765599216696$ &  &  &    \\ 
$\overline{\tau} = $ & $\phantom{(-}4.673109099822270$ &  &  &    \\ 
$\overline{\alpha} = $ & $\phantom{(-}0.0$ &  &  &    \\  
\hline
\end{tabular}
}
\caption{ 
Numerical data for an approximate zero of the 
 map $F^s_{i,j}$ defined in Equation \eqref{eq:EC_to_L4_operator},
with $i = 1$, $j= 4$ and $s = 0.5$.
An argument similar to the proof of Theorem \ref{thm:CAP-L4-to-collision}, 
using the data in the table, leads to an existence proof for the 
ejection-to-$L_4$ orbit illustrated in the left frame of Figure \ref{fig:EC_to_collision}.
\label{tab:m1-to-L4_Tab3}}
\end{table}

\subsection{Transverse homoclinics for $L_4$ and $L_5$} \label{sec:homoclinicProofs}

Combining the methodology of Section \ref{sec:L4_to_collision}, and
especially Lemma \ref{lem:Li-collisions}, with Theorem \ref{thm:NK} we
obtain the following result.

\begin{maintheorem}
\label{thm:CAP-connections} Consider the planar PCRTBP with $\mu = 1/2$ and 
$c = 3$ is the energy level of $L_4$.  
Let 
\[
\bar{p} = 
\left(
\begin{array}{c}
   -0.037058535628028 \\
  -0.007623220519232 \\
   \phantom{-}0.873641524369283 \\
   \phantom{-}0.033084516464648
\end{array}
\right),
\]
and 
\[
B_r = \left\{ x \in \mathbb{R}^4 \, : \, \| x - \bar{p} \| \leq r \right\},
\]
where 
\[
r = 1.6 \times 10^{-9}.
\]
Then there exists a unique $p_* \in B_r$ so that the orbit of $p_*$ is homoclinic to $L_4$ and $W^s(L_4)$ intersects $W^u(L_4)$ transverseley along the 
orbit of  $p_*$, where transversality is relative to the level set $\setof*{E = 3}$. 

\end{maintheorem}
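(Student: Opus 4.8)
The plan is to follow the a posteriori framework used in the proofs of Theorems \ref{thm:ejectionCollision} and \ref{thm:CAP-L4-to-collision}, now applied to the homoclinic shooting operator $F_{i,j,k}$ of Equation \eqref{eq:homoclinicOperator}. Since we seek an orbit homoclinic to $L_4$ we set $j = k = 4$, and because the relevant orbit passes near collision with the second primary (the right orbit of Figure \ref{fig:PCRTBP_L4_homoclinics}) we take $i = 2$, fixing the regularization time $s_1$ and the synodic flight time $s_2$ as in the figure caption. By the homoclinic analogue of Lemmas \ref{lem:collision-connections} and \ref{lem:Li-collisions}, a point with $F_{2,4,4}(\mathbf{x}^*, \tau^*, 0) = 0$ and $DF_{2,4,4}(\mathbf{x}^*, \tau^*, 0)$ an isomorphism yields a transverse intersection of $W^u(L_4)$ and $W^s(L_4)$ relative to the level set $\setof*{E = 3}$; the component $\bar{x}_1 = P_4^u(x_0)$ of the solution is the point $\bar{p}$ appearing in the statement.

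First I would compute a numerical approximate zero $\bar{x} \in \mathbb{R}^{24}$ of $F_{2,4,4}$ by Newton's method (no interval arithmetic needed here), and take $A$ to be a floating point approximate inverse of the $24 \times 24$ derivative $DF_{2,4,4}(\bar{x})$. The validated step then proceeds exactly as before: using interval arithmetic together with validated Taylor integration of both the synodic field $f_\alpha$ and the regularized field $f_2^c$, and the rigorous enclosures of the local stable/unstable manifold charts $P_4^u, P_4^s$ of $L_4$ produced by the methods of \cite{MR3906230}, I would compute an interval vector $\mathbf{F} \ni F_{2,4,4}(\bar{x})$ and an interval matrix $\mathbf{M} \ni DF_{2,4,4}(x)$ valid for all $x$ in a cube $B = \overline{B}(\bar{x}, r_*)$. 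From $Y \geq \|A\mathbf{F}\|$ and $Z \geq \|\mathrm{Id} - A\mathbf{M}\|$ I would set $r = Y/(1-Z)$, check $r \leq r_*$, and verify the Krawczyk inequality $Zr - r + Y \leq 0$ of Equation \eqref{eq:Krawczyk-ineq}. Theorem \ref{thm:NK} then gives a unique $p_* \in \overline{B}(\bar{p}, r)$ with $F_{2,4,4}(p_*) = 0$ and $DF_{2,4,4}(p_*)$ invertible, from which the transverse homoclinic conclusion follows. The two additional homoclinics visible in Figure \ref{fig:PCRTBP_L4_homoclinics} are handled identically, dropping the regularized segment of the operator since those orbits stay away from the primaries.

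The step I expect to be the main obstacle is controlling overestimation (the wrapping effect) in the validated integration, especially on the close approach to $m_2$, where the flow is stiff even in regularized coordinates; as in Remark \ref{rem:additionalShooting} this forces one to subdivide $s_1$ and $\tau$ into many short Taylor steps so as to keep $Z$ comfortably below $1$. A second delicate point is that for $\mu = 1/2$ the point $L_4$ is a saddle-focus with complex conjugate eigenvalues, so the two dimensional manifolds $W^{u,s}(L_4)$ are parameterized over complex eigendirections; obtaining tight rigorous bounds on $P_4^u, P_4^s$ and their derivatives at the boundary circle, and propagating them through $DF_{2,4,4}$ without spoiling the invertibility of $A\,DF_{2,4,4}$ on the ball $B$, is where the analytic and computational care concentrates.
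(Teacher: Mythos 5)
Your proposal is correct and follows essentially the same route as the paper: the same homoclinic shooting operator from Equation \eqref{eq:homoclinicOperator} with regularization at $m_2$ and both manifolds at $L_4$, the same parameters $s_1 = 1.8635$, $s_2 = 5$, a floating-point Newton approximation with approximate inverse $A$, validated enclosures of $F$ and $DF$ on a ball, and the Newton--Krawczyk Theorem \ref{thm:NK} plus the transversality mechanism of Theorem \ref{th:single-shooting} and Lemma \ref{lem:multiple-shooting-2}. Note that your indexing $F_{2,4,4}$ is the one consistent with the operator's definition, whereas the paper's proof text writes ``$i=k=4$, $j=2$'' (and $F_{4,2,4}$), an internal notational slip with the same intended meaning.
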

\begin{table}[t!]
\label{tab:L4-homoclinic} {\scriptsize 
\begin{tabular}{cllll}
\hline
$\overline{x}_0 = $ & $ \phantom{(-}1.411845524482813$ &  &  &    \\ 
$\overline{x}_1 = $ 
& $(-0.037058535628028,$ & $-0.007623220519232,$ & $\phantom{-}0.873641524369283,$ &  $\phantom{-}0.033084516464648)$   \\ 
$\overline{x}_2 = $ & $( -0.243792823114517,$ & $-1.231115802740768,$ & $\phantom{-}0.191555403283542,$ & $-0.508371511645513)$   \\ 
$\overline{x}_3 = $ & $(\phantom{-} 0.536705934592082,$ & $-1.502936895854406,$ & $\phantom{-}0.178454709494811,$ & $-0.106295188690239)$   \\ 
$\overline{x}_4 = $ & $( -0.504618223339967,$ & $-0.258236025635830,$ & $-0.463683951257916,$ & $-1.155517796520023)$   \\ 
$\overline{x}_5 = $ & $( -0.460363255327369,$ &$-0.431694933697799,$ & $\phantom{-}0.467966743350051,$ & $\phantom{-}0.748266448178995)$   \\ 
$\overline{x}_6 = $ & $\phantom{(-} 5.988827136344083$ &  &  &    \\ 
$\overline{\tau} = $ & $\phantom{(-}4.753189987600258$ &  &  &    \\ 
$\overline{\alpha} = $ & $\phantom{(-}0.0$ &  &  &    \\ 
\hline
\end{tabular}
}
\caption{ 
Numerical data for the proof of Theorem \protect\ref{thm:CAP-connections}, 
which provides an approximate zero of the 
$L_4$ homoclinic map $F_{i,j,k}$ defined in Equation \eqref{eq:homoclinicOperator}, 
when $i = k = 4$, $j = 2$, $s_1 = 1.8635$, and $s_2 = 5$.  The orbit is depicted on the right plot in Figure \ref{fig:PCRTBP_L4_homoclinics}.
}
\end{table}
\begin{proof}
As in the earlier cases, the argument hinges on proving the existence of a 
zero of a suitable nonlinear mapping, in this case the map 
$F_{i,j,k}$ defined in Equation \eqref{eq:homoclinicOperator}, 
with $i = k = 4$ and $j = 2$.  The integration time parameters 
are set as $s_1 = 1.8635$ and $s_2 = 5$.  These are the flow times
in the regularized coordinates and in the original coordinates (the second time)
respectively.  
With these choices, a zero of $F_{4,2,4}$ corresponds
to an orbit homoclinic to $L_4$ which passes through the 
Levi-Civita coordinates regularized at $m_2$.

The numerical data $\bar x \in \mathbb{R}^{24}$ providing an approximate zero of $F_{4,2,4}$ is 
reported in Table  \ref{tab:L4-homoclinic}.  
Note that $x_1$ corresponds to $\bar{p}$ in the hypothesis of the theorem.  
We let $A$ be a numerically computed approximate
inverse of the matrix $DF_{4,2,4}(\bar{x})$.  The table data and the 
matrix $A$ are computed using a numerical Newton scheme, and 
standard double precision floating point operations.  

Using validated numerical integration schemes, validated bounds on 
the local stable/unstable manifold parameterizations, and interval arithmetic, 
we compute interval enclosures of $ F_{4,2,4}(\bar{x})$ and of 
$D F_{4,2,4}(B_r(\bar{x}))$ with 
where $r =   1.659487745915747 \times 10^{-9}$.  We then check that 
\[
\| A F(\bar{x}) \| \in  10^{-8} \times 
[   0.0,   0.16432156145308],
\]
and that 
\[
\sup_{x \in B_r(\bar{x})} \| \mbox{Id} - A D F_{4,2,4}(B_r(\bar{x})) \| \in 
[   0.0,   0.00980551463848].
\]
Finally, we use interval arithmetic to verify that $Z r - r + Y < 0$ and transversality 
follows as in the earlier cases which completes the proof. 
\end{proof}
\bigskip

Note that, from a numerical perspective, this is the most difficult computer 
assisted argument presented so far.  This is seen in the fact that $Z \approx 10^{-2}$
and $r \approx 10^{-9}$.  That is, these constants are roughly three orders of magnitude 
less accurate than the previous theorems.  On the other hand, the orbit itself is
more complicated than those in the previous theorems.  We note that the accuracy 
of the result could be improved by taking smaller integration steps and/or
using higher order Taylor approximation. However, this would also increase the required computational time. 

Now, by symmetry, the result above gives a transverse homoclinic orbit for 
$L_5$ which passes near $m_1$.  We also observe that each of these transverse homoclinic orbits  also
satisfy the hypotheses of the theorems of Devaney and Henard discussed in 
Section \ref{sec:intro}.  In particular, Theorem \ref{thm:CAP-connections} also proves the existence of a chaotic 
subsystem in the $c = 3$ energy level of the PCRTBP near the orbit of $p_*$, and a tube of 
periodic orbits  parameterized by the Jacobi constant which accumulate to the homoclinic orbit through  $p_*$.

We remark that, using similar arguments, we are able to prove also the 
existence and transversality of of the homoclinic orbits in the left and center frames of 
Figure \ref{fig:PCRTBP_L4_homoclinics}.  
More precisely, let 
\[
\bar{p}_1 = \left(
\begin{array}{c}
 -0.033854025583296 \\
  -0.043110876471418 \\
   \phantom{-}0.844639632487862 \\
   \phantom{-}0.007320747846173
\end{array}
\right), \quad \quad \quad 
\bar{p}_2 = \left(
\begin{array}{c}
  \phantom{-}0.029871559148065 \\
  -0.006337684774610 \\
  \phantom{-} 0.850175365286339 \\
  -0.034734413580682
\end{array}
\right),
\]
and 
\[
r_1 = 2.03 \times 10^{-10}, \quad \quad \quad 
r_2 = 1.84 \times 10^{-8}.
\]
Then there exist unique points $p^1_* \in B(\bar{p}_1, r_1)$ and 
$p_*^2 \in B(\bar{p}_2, r_2)$ so that $W^{s,u}(L_4)$ intersect transversely 
along the orbits through these points. It is also interesting to note that $r_2$ is two orders of magnitude larger than $r_1$.  This is 
caused by the fact that the time of flight (integration time) 
is longer in this case and, more importantly, the fact that the second orbit passes very close to $m_1$.  Indeed,
the error bounds for the second orbit would very likely be improved by changing to 
regularized coordinates near $m_1$ and this may even be necessary to validate some homoclinics passing even closer to $m_1$ or $m_2$. Nevertheless, we were able to validate these orbits 
in standard coordinates so we have not done this here.  

The orbit of $p_*^1$ is illustrated in the left frame of Figure  \ref{fig:PCRTBP_L4_homoclinics}
appears to have $y$-axis symmetry, however we do not use this symmetry nor do we rigorously prove its existence. The 
orbit of $p_*^2$ is illustrated in the center frame of Figure  \ref{fig:PCRTBP_L4_homoclinics} has no apparent symmetry.  
The orbits illustrated in the left and center frames have appeared previously in the 
literature, as remarked in Section \ref{sec:intro}.  However, to the best of our knowledge this is the first mathematically rigorous proof of their existence.

\subsection{Periodic orbits passing through collision} \label{sec:PO_collisions}

We apply the methodology of section \ref{sec:symmetric-orbits}, namely Lemma %
\ref{lem:Lyap-existence} and Theorem \ref{th:Lyap-through-collision}, with
Theorem \ref{thm:NK} to obtain the following result. We consider the
Earth-Moon mass ratio largely for the sake of variety.

\begin{maintheorem}
\label{th:CAP-Lyap}Consider the Earth-Moon system \footnote{%
So named because this is the approximate mass ratio of the Moon relative to the Earth.} where $m_2$ has mass $\mu =0.0123/1.0123$ and $%
m_{1}$ has mass $1-\mu$. Let\footnote{%
In fact, our numerical calculations suggest that a more accurate value of
the Jacobi constant for which we have the collision is $1.434045949300768$.
However, since in the theorem we obtain only interval results, we round $%
c_{0}$ so that digits smaller than the width of the interval are not used.}%
\begin{equation*}
c_{0}=1.4340459493,\qquad \text{and\qquad }\delta =10^{-11}.
\end{equation*}%
There exists a single value $c^{\ast }\in \left( c_{0}-\delta ,c_{0}+\delta
\right) $ of the Jacobi integral, for which we have an orbit along the
intersection of the ejection and collision manifolds of $m_{1}$. Moreover,
for every $c\in \left[ c_{0}-\delta ,c_{0}+\delta \right] \setminus \left\{
c^{\ast }\right\} $ we have an $S$-symmetric Lyapunov orbit, that passes
close to the collision with $m_{1}$. In addition, for every $c\in \left\{ 1.2,1.25,1.3,\ldots ,1.65\right\} $
there exists a Lyapunov orbit, which passes close the collision with $m_{1}$%
. (These orbits are depicted in Figure \ref{fig:Lyap}.)
\end{maintheorem}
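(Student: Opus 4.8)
The plan is to reduce the first two assertions to Theorem \ref{th:Lyap-through-collision}, and to handle the discrete list of energies separately by direct application of the Newton-Krawczyk Theorem \ref{thm:NK}. In every case the shooting operator is $F_c$ from Equation \eqref{eq:Fc-choice}, specialized to the Earth-Moon mass ratio $\mu = 0.0123/1.0123$, with the regularized flow time $s$ fixed to some convenient numerical value. All of the work then consists of verifying, by interval arithmetic, the hypotheses of these two results.

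First I would treat the family near $c_0$, which supplies the collision value $c^{\ast}$ together with the surrounding tube of periodic orbits. Theorem \ref{th:Lyap-through-collision} presumes a solution branch $c \mapsto (\mathbf{x}(c), \tau(c))$ of $F_c(\mathbf{x}(c), \tau(c), 0) = 0$ on the whole interval $[c_1, c_2] = [c_0 - \delta, c_0 + \delta]$, so the first task is to produce this branch rigorously. I would do this with a single parametrized Newton-Krawczyk argument: fixing a numerically computed approximate zero $\bar{x}$ (say at $c = c_0$) and an approximate inverse $A$ of $\partial_{\mathbf{y}} F_{c_0}(\bar{x})$, I would establish bounds $Y$ and $Z$ as in Equations \eqref{eq:Krawczyk-Y}--\eqref{eq:Krawczyk-Z} that hold \emph{uniformly} for every $c \in [c_1, c_2]$, by enclosing $F_c(\bar{x})$ and $DF_c$ over the $c$-interval. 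Theorem \ref{thm:NK} then gives, for each fixed $c$, a unique zero $\mathbf{x}(c)$ in a small ball, and uniqueness together with the invertibility of $\partial_{\mathbf{y}} F_c$ makes $c \mapsto \mathbf{x}(c)$ a smooth branch by the implicit function theorem. With the branch in hand, the three remaining hypotheses of Theorem \ref{th:Lyap-through-collision} are checked with interval arithmetic: the Bolzano sign change \eqref{eq:Bolzano-condition-Lyap}, namely $x_0(c_1) > 0 > x_0(c_2)$; the nonvanishing \eqref{eq:x6-nonzero} of $x_6(c)$; and the monotonicity \eqref{eq:der-cond-Lyap}, $\tfrac{d}{dc} x_0(c) < 0$, for which I would evaluate the implicit-function-theorem expression \eqref{eq:implicit-dx-dc} on a validated enclosure covering the whole interval. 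Theorem \ref{th:Lyap-through-collision} then yields the unique $c^{\ast}$ with an ejection-collision of $m_1$ and the periodic orbits for every other $c$ in the interval, with Lemma \ref{lem:Lyap-existence} identifying which case occurs.

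For the discrete energies $c \in \{1.2, 1.25, \ldots, 1.65\}$ I would run an independent Newton-Krawczyk validation at each value, producing a zero $(\mathbf{x}(c), \tau(c), 0)$ of $F_c$ with $DF_c$ invertible. For these orbits one checks additionally that $x_0(c) \neq 0$ and $x_6(c) \neq 0$, so that Lemma \ref{lem:Lyap-existence}, case 1, certifies a genuine $S$-symmetric periodic orbit rather than an ejection-collision; the near-collision character is read off from the fact that the orbit is routed through the regularized coordinates at $m_1$ (and near $m_2$).

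The main obstacle is the uniform-in-$c$ control required by the family near $c_0$. Because $\delta = 10^{-11}$ is minuscule and $x_0(c)$ is itself nearly zero on the interval (the true collision value lies within $\delta$ of $c_0$), the endpoint signs in \eqref{eq:Bolzano-condition-Lyap} are a tight competition between two very small quantities, and the validated enclosures must be sharp enough to resolve them; likewise the derivative bound \eqref{eq:der-cond-Lyap} must come out strictly negative across the entire enclosure. Compounding this, the operator $F_c$ integrates through a near collision with $m_2$ and through the collision with $m_1$ in the Levi-Civita coordinates, precisely where validated Taylor integration is most prone to overestimation (the wrapping effect). Keeping $Y$, $Z$, and the derivative enclosures tight enough --- most likely by subdividing the integration steps as in Remark \ref{rem:additionalShooting} --- is the delicate part; the remaining verifications are routine interval-arithmetic checks.
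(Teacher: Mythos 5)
Your proposal is correct and follows essentially the same route as the paper: a single approximate zero of the operator $F_c$ from Equation \eqref{eq:Fc-choice} validated uniformly in $c$ over $[c_0-\delta,c_0+\delta]$ via Theorem \ref{thm:NK}, followed by interval-arithmetic verification of the Bolzano condition \eqref{eq:Bolzano-condition-Lyap}, the nonvanishing of $x_6(c)$, and the derivative bound \eqref{eq:der-cond-Lyap} via \eqref{eq:implicit-dx-dc}, so that Theorem \ref{th:Lyap-through-collision} applies, with the discrete energies handled by separate Newton--Krawczyk validations and Lemma \ref{lem:Lyap-existence}. Your anticipated difficulties are also the ones that matter in practice, though the paper's CAPD implementation turned out to be efficient enough to avoid subdividing time steps for this particular theorem.
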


\begin{table}
{\scriptsize 
\begin{tabular}{r l l l l}
\hline 
$\bar x_0 = $ & \phantom{-(}0.0 &  &  &    \\
$\bar x_1 = $ & (\phantom{-}0.0, & \phantom{-}2.8111911379251, & \phantom{-}0.0, &\phantom{-}0.0)  \\ 
$\bar x_2 = $ & (\phantom{-}0.96886794638213, & -0.3219837525934, & -0.52587590839627, & -2.8644348266831)  \\
$\bar x_3 = $ & (\phantom{-}0.67431017475157, & -0.74811608844773, & -1.0190086228395, & -1.0721803622694)  \\
$\bar x_4 = $ & (-1.0199016713004, & \phantom{-}0.72482377063238, & -0.062207790440189, & \phantom{-}1.1639536137604) \\
$\bar x_5 = $ & (\phantom{-}0.1377088390491,& -0.32616835939217, & -0.22586709346235, & \phantom{-}0.6480010784062) \\
$\bar x_6 = $ & \phantom{(-}0.070375791076957 \\
$\bar \tau = $ & \phantom{(-}2.0972398526268 \\
$\bar \alpha = $ & \phantom{(-}0.0 \\
\hline 
\end{tabular}
}
\caption{Numerical data for the proof of Theorem \protect\ref{th:CAP-Lyap}, which gives an approximate solution to $F_c=0$ for the operator (\ref{eq:Fc-choice}), for which we have a collision of the family of Lyapunov orbits with $m_1$ for the Earth-Moon system (see Figure \ref{fig:Lyap}). This occurs for a unique value of the Jacobi constant $c^* \in \mathbf{c}$. \label{tabl:Lyap}}
\end{table}

\begin{proof}
The orbits for the Jacobi integral values in $\mathbf{c}:=\left[
c_{0}-\delta ,c_{0}+\delta \right] $ were established by means of Theorems %
\ref{th:Lyap-through-collision} and \ref{thm:aPosteriori}. We have first
pre-computed numerically (through a standard, non-interval, numerical
computation) an approximation $\mathbf{\bar{x}}\in \mathbb{R}^{22}$, $\bar{%
\tau}\in \mathbb{R}$ for the functions $\mathbf{x}\left( c\right) $ and $%
\tau \left( c\right) $, for $c\in \mathbf{c}$. (The $\mathbf{\bar{x}}$ and $%
\bar \tau $ are written out in Table \ref{tabl:Lyap}.) We then took $%
\bar x:=\left( \mathbf{\bar x},\bar \tau ,0\right) \in \mathbb{R}^{24},$ and
a ball $\overline{B}\left( \bar x,r\right) $, in the maximum norm, with $%
r=10^{-11}.$ We established using Theorem \ref%
{thm:aPosteriori} that $\mathbf{x}\left( c\right) $ and $\tau \left(
c\right) $ satisfying
\begin{equation*}
F_{c}\left( \mathbf{x}\left( c\right) ,\tau \left( c\right) ,0\right)
=0,\qquad \text{for }c\in \mathbf{c},
\end{equation*}%
are $r$-close to $\mathbf{\bar{x}}$ and $\bar{\tau}.$ To apply Theorem \ref%
{thm:aPosteriori} we have used the matrix $A$ to be an approximation of $%
\left( DF_{c}(\mathbf{\bar{x}},\bar{\tau},0)\right) ^{-1}$ (computed with
standard numerics, without interval arithmetic).

We  also checked using interval arithmetic that%
\begin{eqnarray*}
x_{0}\left( c_0-\delta \right) &\in &[3.2261\cdot 10^{-12},5.2262\cdot
10^{-12}]>0, \\
x_{0}\left( c_0+\delta \right) &\in &[-4.6229\cdot 10^{-12},-2.6228\cdot
10^{-12}]<0.
\end{eqnarray*}

By using Equation \eqref{eq:implicit-dx-dc}, we have established the following
interval arithmetic bound for the derivative of $x_0$ with respect to the
parameter 
\begin{equation*}
\frac{d}{dc}x_{0}\left( c\right) \in \left[ -0.53146,-0.25344\right]
<0\qquad \text{for }c\in \mathbf{c}.
\end{equation*}%
We also verified that%
\begin{equation*}
x_{6}\left( c\right) \in \left[ 0.07037579,0.07037580\right] ,\qquad \text{%
for }c\in \mathbf{c},
\end{equation*}%
so $x_{6}\left( c\right) \neq 0$. This proves all necessary hypotheses of Theorem \ref%
{th:Lyap-through-collision} are satisfied for the interval $\mathbf{c}$,
which finishes the first part of the proof.

The Lyapunov orbits for $c\in \left\{ 1.2,1.25,1.3,\ldots ,1.65\right\} $
were estabilshed in a similar way. For each value of the Jacobi constant we
have non-rigorously computed an approximation of a point for which $F_{c}$
is close to zero, and validated that we have $F_{c}=0$ for a point in a
given neighbourhood of each approximation by means of Theorem \ref%
{th:Lyap-through-collision}. Then each Lyapunov orbit followed from Lemma %
\ref{lem:Lyap-existence}. The proof was conducted by using the CAPD library \cite{CAPD_paper} and took
under 4 seconds on a standard laptop. 
\end{proof}

In a similar way we have used the operator in Equation \eqref{eq:Fc-equal} to prove the
following result. \begin{table}
{\scriptsize 
\begin{tabular}{r l l l l}
\hline 
$\bar x_0 = $ & \phantom{(-}2.1500812504263 &  &  &    \\
$\bar x_1 = $ & (\phantom{-}0.0, &\phantom{-}1.9284591731628, & \phantom{-}2.1500812504263, &\phantom{-}0.0) \\
$\bar y_1 = $ & (\phantom{-}0.69048473611567, &\phantom{-}1.7931365837031, &\phantom{-}2.0235432631366, &-0.68131264815823) \\
$\bar y_2 = $ & (\phantom{-}1.2840491252838, &\phantom{-}1.4060903194974, &\phantom{-}1.6633024005717, &-1.2578372410208) \\
$\bar y_3 = $ &(\phantom{-}1.6975511373876, & \phantom{-}0.82331762641153, &\phantom{-}1.1255430505039, &-1.635312833307) \\
$\bar y_4 = $ &(\phantom{-}1.8749336204161, &\phantom{-}0.13626785074409, & \phantom{-}0.4974554541058, &-1.7408028751654) \\
$\bar y_5 = $ & (\phantom{-}1.7998279644685, &-0.53073278614628,& -0.11297480280335, & -1.5366473737295) \\
$\bar y_6 = $ &(\phantom{-}1.5061749347656, & -1.0305902992759,& -0.59342931060715, & -1.0405479042095) \\
$\bar y_7 = $ &(\phantom{-}1.0818972907729,& -1.2225719420862, &-0.85102013466618, &-0.34180581034401) \\
$\bar y_8 = $ & (\phantom{-}0.65897461363208,& -1.0129455565064, &-0.83705911740279, &\phantom{-}0.41484122714387) \\
$\bar x_2 = $ & (\phantom{-}0.39363679634804, &-0.35214129843918,& -0.55459777216455, &\phantom{-}1.118144276789) \\
$\bar x_3 = $ & (\phantom{-}0.47871801188109, &-1.6325298121847, &-0.5792530867862, &\phantom{-}0.66259374214967) \\
$\bar x_4 = $ & (\phantom{-}{\bf 0.40239981358785},& -1.0164469492932, &\phantom{-}0.0, &\phantom{-}1.7224504635177) \\
$\bar x_5 = $ & (-0.25865224139372,& -0.43561054122851,& \phantom{-}0.51876042853484, &\phantom{-}1.7861707478994) \\
$\bar x_6 = $ &(\phantom{-}0.29778859976434,& -1.2111468567795, &-0.2683570951738, &-1.0237309759288) \\
$\bar x_7 = $ & \phantom{(}-0.38367247647373 & & & \\
$\bar \tau = $ & \phantom{(-}0.24444305938687 & & & \\
$\bar \alpha = $ & \phantom{(-}0.0 & & & \\
\hline 
\end{tabular}
}
\caption{Numerical data for the proof of Theorem \protect\ref{thm:doubleCollision} giving an approximate solution to $F_c=0$, for the operator (\ref{eq:Fc-equal}),  for $c=2.05991609689$ for which we have a double collision of a family of $R$-symmetric periodic orbits for the equal masses system; see Figure \ref{fig:eq}. In the bold font we have singled out the first coefficient of $x_4$, which is the time $s_4$ and not the physical coordinate of the collision point, for which we have $\hat x=0$. (See Equations \eqref{eq:s4} and \eqref{eq:Fc-equal}.)\label{tabl:eq}}
\end{table}

\begin{maintheorem} \label{thm:doubleCollision}
Consider the equal masses system where $\mu =\frac{1}{2}$. Let%
\footnote{%
We believe that a more accurate value of the Jacobi constant for which we
have the double collision is $2.059916096889689$.}%
\begin{equation*}
c_{0}=2.05991609689,\qquad \text{and\qquad }\delta =10^{-11}.
\end{equation*}%
There exists a single value $c^{\ast }\in \left( c_{0}-\delta ,c_{0}+\delta
\right) $ of the Jacobi integral, for which we have two intersections of the
ejection and collision manifolds of $m_{1}$ and $m_{2}$ (a double
collision). Moreover, for every $c\in \left[ c_{0}-\delta ,c_{0}+\delta %
\right] \setminus \left\{ c^{\ast }\right\} $ we have an $R$-symmetric
periodic orbit, that passes close to the collision with both $m_{1}$ and $%
m_{2}$.

In addition, for every $c\in \left\{ 2,2.05,2.1,2.15,2.2\right\} $ there
exists an $R$-symmetric periodic orbit, which passes close the collisions
with $m_{1}$ and $m_{2}$. (See Figure \ref{fig:eq}.)
\end{maintheorem}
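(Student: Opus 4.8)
The plan is to mirror the proof of Theorem \ref{th:CAP-Lyap} almost verbatim, now driving the energy-continuation with the regularized crossing coordinate $\hat{y}_4$ in place of $x_0$ and exploiting the time-reversing $R$-symmetry \eqref{eq:R-symmetry.} to promote a single near-collision half-orbit into a full $R$-symmetric periodic (or doubly collisional) orbit. First I would fix $\mu = 1/2$, select the fixed flow times $s_2, s_5 > 0$, and assemble the operator $F_c$ of Equation \eqref{eq:Fc-equal}, keeping in mind that the first slot of $x_4$ is the \emph{variable} integration time $s_4$ and that $\hat{x}_4 = (0, \hat{p}_4, \hat{y}_4, \hat{q}_4)$ records the point where the half-orbit crosses the hyperplane $\{\hat{x} = 0\}$ in the $m_1$-regularized chart; the additional scalar condition $\hat{y}_4 = 0$ then places this crossing at the collision origin. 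As recorded in Table \ref{tabl:eq}, I would compute, by an ordinary non-interval Newton iteration, an approximate zero $\bar{x}$ of $F_{c_0}$ at the central energy $c_0 = 2.05991609689$, subdividing the synodic integration legs into the intermediate nodes $\bar{y}_1, \dots, \bar{y}_8$ exactly as licensed by Remark \ref{rem:additionalShooting}, and set $A$ to be a floating-point approximate inverse of $DF_{c_0}(\bar{x})$.

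The core of the argument is a $c$-uniform a posteriori step. Over the interval $\mathbf{c} = [c_0 - \delta, c_0 + \delta]$ with $\delta = 10^{-11}$, I would use validated Taylor integration of the synodic field $f_\alpha$ and of the Levi-Civita field $f_1^c$ (the latter flowing cleanly through collision, since $f_1^c$ is smooth at the origin), together with their spatial, parametric, and $c$-parametric variational equations, to produce interval enclosures $\mathbf{F} \ni F_c(\bar{x})$ and $\mathbf{M} \ni DF_c(x)$ valid for all $x \in \overline{B}(\bar{x}, r_*)$ and all $c \in \mathbf{c}$. Verifying the Newton–Krawczyk inequality \eqref{eq:Krawczyk-ineq} through Theorem \ref{thm:NK} then yields, for every $c \in \mathbf{c}$, a unique branch $\mathbf{x}(c), \tau(c)$ solving $F_c(\mathbf{x}(c), \tau(c), 0) = 0$ within an $r$-neighborhood of the table data, with $DF_c$ invertible (so that $\alpha(c) = 0$ automatically, $\alpha$ being unfolding).

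Next I would differentiate the branch in energy via the implicit-function formula \eqref{eq:implicit-dx-dc}, extract a validated enclosure of $\tfrac{d}{dc}\hat{y}_4(c)$, and check that it is strictly negative throughout $\mathbf{c}$, while confirming the endpoint signs $\hat{y}_4(c_0 - \delta) > 0 > \hat{y}_4(c_0 + \delta)$ by interval evaluation. Feeding these into the $\hat{y}_4$-version of Theorem \ref{th:Lyap-through-collision} (the minor modification flagged at the end of Section \ref{sec:symmetric-orbits}) produces a unique $c^* \in (c_0 - \delta, c_0 + \delta)$ with $\hat{y}_4(c^*) = 0$: at this energy the half-orbit reaches the origin of the $m_1$-chart, i.e. collides with $m_1$, and the $R$-symmetry \eqref{eq:R-symmetry.} forces a simultaneous collision with $m_2$ (the two primaries being interchanged by $x \mapsto -x$ when $\mu = 1/2$), giving the claimed double collision and its two manifold intersections. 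For every other $c$ in the interval, $\hat{y}_4(c) \neq 0$ and Lemma \ref{lem:Lyap-existence}, in its $R$-symmetric incarnation, delivers an $R$-symmetric periodic orbit passing near both primaries. The additional energies $c \in \{2, 2.05, 2.1, 2.15, 2.2\}$ require no monotonicity: at each value I would independently run the Newton–Krawczyk verification of a zero of $F_c$ with $\hat{y}_4 \neq 0$, again obtaining an $R$-symmetric periodic orbit.

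The hard part will be the uniform sign control of $\tfrac{d}{dc}\hat{y}_4(c)$ across $\mathbf{c}$. Because the orbit grazes both primaries, the Jacobian $DF_c$ is comparatively ill-conditioned, and the enclosure of its inverse entering \eqref{eq:implicit-dx-dc} is the most wrapping-prone quantity in the whole argument; keeping that enclosure tight enough to certify strict negativity, rather than merely an interval straddling zero, is what pins down the admissible width $\delta$ and forces short time steps and high Taylor order. The validated passage through the regularized collision itself is, by contrast, routine once $f_1^c$ is in hand, so the regularization is precisely what converts an otherwise singular continuation into a smooth one amenable to Theorem \ref{thm:NK}.
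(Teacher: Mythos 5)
Your proposal is correct and follows essentially the same route as the paper: the paper's proof likewise mirrors Theorem \ref{th:CAP-Lyap}, applies the $\hat{y}_4$-version of Theorem \ref{th:Lyap-through-collision} to the operator of Equation \eqref{eq:Fc-equal} (with $s_2 = 3.3$, $s_5 = 0.3$), and overcomes the very difficulty you flag --- controlling $\frac{d}{dc}\pi_{\hat{y}}\mathbf{x}(c)$ over the long synodic leg --- by inserting $m=8$ intermediate shooting nodes exactly as you anticipate. The only discrepancy is the orientation of the monotonicity: the paper's computation certifies $\frac{d}{dc}\pi_{\hat{y}}\mathbf{x}(c) > 0$ rather than $< 0$, which merely flips the endpoint signs in the Bolzano argument and does not affect the structure of the proof.
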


\begin{proof}
The proof follows along the same lines as the proof of Theorem \ref%
{th:CAP-Lyap}. We do not write out the details of all the estimates since we
feel that this brings little added value\footnote{%
The code for the proof is made available on the personal web page of Maciej
Capi\'{n}ski.}. In the operator $F_{c}$ from Equation \eqref{eq:Fc-equal} we have
taken $s_{2}=3.3$ and $s_{5}=0.3$. The fact that $s_{2}$ involves a long
integration time caused a technical problem for us in obtaining an estimate
for $\frac{d}{dc}\pi _{\hat{y}}\mathbf{x}\left( c\right) $. To get a good
enough estimate to establish that $\frac{d}{dc}\pi _{\hat{y}}\mathbf{x}%
\left( c\right) >0$ we needed to include additional points $y_{1},\ldots
,y_{m}$ in the shooting scheme and extend $F_{c}$ to include 
\begin{equation*}
\phi _{\alpha }\left( x_{1},s\right) -y_{1},\quad \phi _{\alpha }\left(
y_{1},s\right) -y_{2},\quad \ldots \quad \phi _{\alpha }\left(
y_{m-1},s\right) -y_{m},\quad \phi _{\alpha }\left( y_{m},s\right) -x_{2},
\end{equation*}%
where $s=s_{2}/\left( m+1\right) .$ We took $m=8$, and the point $X_{0}$
wich serves as our approximation for $F_{c}=0$ is written out in Table \ref%
{tabl:eq}. The proof took under 10 seconds on a standard laptop.
\end{proof}

\begin{remark}[MatLab with IntLab versus CAPD] 
{\em
We note that the computer programs implemented in C\texttt{++} using the 
CAPD library run much faster than the programs implemented in MatLab using
IntLab to manage the interval arithmetic.  This is not surprising, as compiled 
programs typically run several hundred times faster than MatLab programs, 
and the use of interval arithmetic only complicates things. Moreover, 
CAPD is a well tested, optimized, general purpose package, while our 
IntLab codes were written specifically for this project. The CAPD library, due to its efficient integrators, allowed us to perform almost all of the proofs without subdividing the time steps, which was needed for the MatLab code (see Remark \ref{rem:additionalShooting} and comments at the end of section \ref{sec:EC}), except for the proof of Theorem \ref{thm:doubleCollision} (see Table \ref{tabl:eq}). In particular, little time 
has been spent on optimizing these codes. Nevertheless, it is nice to have 
rigorous integrators implemented in multiple languages, and the codes for 
validating the 2D stable/unstable manifolds at $L_4$ were written in IntLab
and have not been ported to $C^{++}$.
}
\end{remark}

\section{Acknowledgments}
The authors gratefully acknowledge conversations with Pau Martin, Immaculada Baldoma, and Marian Gidea
at the $60^{\rm th}$ birthday conference of Rafael de la Llave, \textit{Llavefest} in Barcelona in 
the summer of 2017.  We also offer our sincere thanks to an anonymous referee whose 
thorough review and thoughtful suggestions greatly improved the quality of the final 
manuscript.

\appendix 
{\small

\section{\label{sec:proof}}

\begin{proof}[Proof of Theorem \protect\ref{thm:aPosteriori}]
From Equation \eqref{eq:Krawczyk-ineq} and since $r>0$ we see that $Z+\frac{Y}{r}\leq
1,$ which since $Y,r>0$ gives 
\begin{equation}
Z<1.  \label{eq:Z-smaller-than-one}
\end{equation}

Now, define the Newton operator 
\begin{equation}
T(x)=x-AF(x).  \label{eq:T-def-Krawczyk}
\end{equation}%
For $x_{1},x_{2}\in \overline{B}(x_{0},r)$, by the mean value theorem and (%
\ref{eq:Krawczyk-Z}), we see that 
\begin{align*}
\Vert T(x_{1})-T(x_{2})\Vert & \leq \sup_{x\in \overline{B}(x_{0},r)}\Vert
DT(x)\Vert \Vert x_{1}-x_{2}\Vert  \\
& =\sup_{x\in \overline{B}(x_{0},r)}\Vert \mathrm{Id}-ADF(x)\Vert \left\Vert
x_{1}-x_{2}\right\Vert  \\
& \leq Z\Vert x_{1}-x_{2}\Vert ,
\end{align*}%
and since $Z<1$ we conclude that $T$ is a contraction on $\overline{B}%
(x_{0},r)$.

To see that $T$ maps $\overline{B}(x_{0},r)$ into itself, for $x\in 
\overline{B}(x_{0},r)$ by Equations \eqref{eq:Krawczyk-Y}--\eqref{eq:Krawczyk-ineq} we
have 
\begin{align*}
\Vert T(x)-x_{0}\Vert & \leq \Vert T(x)-T(x_{0})\Vert +\Vert
T(x_{0})-x_{0}\Vert \\
& \leq \sup_{x\in \overline{B}(x_{0},r)}\Vert DT(z)\Vert \Vert x-x_{0}\Vert
+\Vert AF(x_{0})\Vert \\
& \leq Zr+Y \\
& \leq r
\end{align*}%
hence $T(x)\in \overline{B}(x_{0},r)$.

By the Banach contraction mapping theorem there is a unique $\hat{x}\in 
\overline{B}(x_{0},r)$ so that 
\begin{equation}
T(\hat{x})=\hat{x}.  \label{eq:x-hat-zero}
\end{equation}

Now observe that for every $x\in \overline{B}(x_{0},r)$, including $\hat{x}$%
, by Equations \eqref{eq:Krawczyk-Z} and \eqref{eq:Z-smaller-than-one} we have that 
\begin{equation*}
\Vert \mathrm{Id}-ADF(\hat{x})\Vert \leq Z<1.
\end{equation*}%
Then 
\begin{equation*}
ADF(\hat{x})=\mathrm{Id}-\left( \mathrm{Id}-ADF(\hat{x})\right) =\mathrm{Id}%
-B
\end{equation*}%
with $\Vert B\Vert <1$. By the Neumann series theorem we see that $ADF(\hat{x%
})$ is invertible. It therefore follows that both $A$ and $DF(\hat{x})$ are
also invertible.

From Equations \eqref{eq:T-def-Krawczyk} and \eqref{eq:x-hat-zero} we see that $AF(%
\hat{x})=0.$ But $A$ is invertible, so it follows that $F(\hat{x})=0$, as
required.
\end{proof}


\section{\label{sec:manifolds}}
Here follows a terse description of the local stable/unstable 
manifold parameterizations used in the proofs in 
Sections \ref{sec:EC_to_L4_proofs} and \ref{sec:homoclinicProofs}.  
Much more complete information is found in \cite{MR3906230,MR2177465,mamotreto}.
In the present discussion 
$f \colon U \to \mathbb{R}^d$ denotes the (real analytic)
PCRTB vector field, and $L_{j}$ is one of the 
equilateral triangle libration points -- so that $j = 4,5$.
We are interested in parameter values where $Df(L_{4,5})$ has complex 
conjugate stable/unstable eigenvalues 
\[
\pm \alpha \pm i\beta,
\]
with $\alpha, \beta > 0$.  We write $\lambda = -\alpha + i \beta$
when considering the stable manifold, and $\lambda = \alpha + i \beta$
when considering the unstable.  

Our goal is to develop a formal series expansion of the 
form 
\begin{equation} \label{eq:powerSeries}
w_j^{\kappa}(z_1, z_1) = \sum_{m = 0}^\infty \sum_{n = 0}^\infty 
p_{mn} z_1^m  z_2^n, 
\end{equation}
where $j = 4$ or $5$ depending on wether we are based at 
$L_4$ or $L_5$, and $\kappa = s$ or $u$ depending on wether we
considering the stable or unstable manifold. 
Here $p_{mn} \in \mathbb{C}^4$ for all $(m,n) \in \mathbb{N}^2$.  
Moreover, we take 
\[
p_{00} = L_j,
\]
where $j = 4,5$, and 
\[
p_{10}  = \xi, \quad \quad \mbox{and} \quad \quad p_{01} = \overline{\xi}, 
\]
where $\xi, \overline{\xi} \in \mathbb{C}^4$ are complex conjugate eigenvectors 
associated with the complex conjugate eigenvalues $
\lambda, \bar\lambda \in \mathbb{C}$.

We use the parameterization method to characterize $w_j^\kappa$. 
While we  refer the interested reader to 
\cite{MR2177465,mamotreto} for much more complete discussion of 
this method, we remark that the main idea is to 
solve the invariance equation 
\begin{equation}\label{eq:invEq}
\lambda z_1 \frac{\partial}{\partial z_1} w_j^\kappa(z_1, z_2) + \overline{\lambda} z_2
w_j^\kappa (z_1,z_2) = f(w_j^\kappa(z_1,z_2)),
\end{equation}
subject to the constraints 
\[
w_j^\kappa (0,0) = L_j, \quad \quad 
\frac{\partial}{\partial v} w_j^\kappa (0,0) = \xi, 
\quad \quad \mbox{and} \quad \quad  
\frac{\partial}{\partial w} w_j^\kappa (0,0) = \overline{\xi}.
\]
It can be show that if $w_j^{\kappa}$ solves
Equation \eqref{eq:invEq} subject to these constraints, 
then it parameterizes a local stable/unstable manifold at $L_j$.

To solve Equation \eqref{eq:invEq} numerically we 
insert the power series ansatz of Equation \eqref{eq:powerSeries}, 
expand the nonlinearities, and match like powers of $z_1$ and $z_2$.
This procedure leads to homological equations of the form 
\[
\left(Df(p_{00}) - (m \lambda + n \overline{\lambda})\mbox{Id}\right) p_{mn} = \mathcal{R}_{mn}, 
\]
describing the power series coefficients $p_{mn}$ for $m + n \geq 2$.  Here
$\mathcal{R}_{mn}$ is a nonlinear function of the coefficients of order 
less than $m + n$, whose computation in the case of the PCRTBP
 is discussed in more detail in \cite{MR3906230}. 
Note that if $f$ is real analytic, then the coefficients have the 
symmetry
\[
p_{nm} = \overline{p_{mn}},
\]
and we obtain the real image of $\mathcal{P}$ by evaluating on complex conjugate 
variables $w = \overline{v}$.

Since the order zero and order 1 coefficients are determined by $L_j$ and its eigendata,
we can compute $p_{mn}$ for all $2 \leq m + n \leq N$ by recursively solving the 
linear homological equations to any desired order $N \geq 2$.  
We obtain the approximation 
\[
w_j^{\kappa, N}(z_1,z_2) = \sum_{m+n = 0}^N p_{mn} z_1^m z_2^n.
\]

{\tiny
\begin{table}[tbp]
{\scriptsize
\begin{tabular}{clll}
\hline
 $m$ $\setminus$ $n$ & \qquad 0 &\qquad 1 &\qquad 2 
 \\
 \hline
 $0$ & 
 $\phantom{10^{-4}}
\left(
\begin{array}{l}
\phantom{-}0.0 \\
\phantom{-}0.0 \\
\phantom{-}0.866 \\
\phantom{-}0.0 
\end{array}
\right)
$   &
$\phantom{10^{-4}}
\left(
\begin{array}{l}
\phantom{-}0.012 + 0.018i \\
-0.025  \\
-0.015 + 0.0067i \\
\phantom{-}0.0034 - 0.019i 
  \end{array}
  \right)$
 &  
 $
  10^{-3} 
  \left(
  \begin{array}{l}
 -0.050 + 0.076i \\
 -0.081 - 0.190i \\
 -0.054 - 0.041i\\
  \phantom{-}0.140 - 0.051i
  \end{array}
  \right)
 $
 \\
 $1$ &
 $\phantom{10^{-4}}\left(
\begin{array}{l}
\phantom{-}0.012 - 0.018i \\
 -0.025  \\
 -0.015 - 0.0067i \\
  \phantom{-}0.0034 + 0.019i 
  \end{array}
  \right)$
 &  
 $
 10^{-3}
 \left(
 \begin{array}{l}
  \phantom{-}0.37 \\
 -0.47 \\
 -0.09  \\
 \phantom{-}0.12 
  \end{array}
  \right)
  $
 &  
 $
 10^{-4}
 \left(
 \begin{array}{l}
  \phantom{-}0.041 + 0.055 i \\
 -0.130 - 0.070i \\
 -0.036 - 0.048i \\
  \phantom{-}0.110 + 0.060i
  \end{array}
  \right) $
 \\
 $2$ & 
  $
  10^{-3} 
  \left(
  \begin{array}{l}
 -0.050 - 0.076i \\
 -0.081 + 0.190i \\
 -0.054 + 0.041i\\
  \phantom{-}0.140 + 0.051i
  \end{array}
  \right)
 $
  & 
   $
 10^{-4}
 \left(
 \begin{array}{l}
  \phantom{-}0.041 - 0.055 i \\
 -0.130 + 0.070 i \\
 -0.036 + 0.048i \\
  \phantom{-}0.110 - 0.060i
  \end{array}
  \right) $
   & 0 
 \\
\hline
\end{tabular}
\[
p_{03}=
  10^{-5} \left(
  \begin{array}{l}
 -0.14 + 0.18i \\
 -0.26 - 0.76i \\
 \phantom{-}0.11 + 0.09i \\
 -0.47 + 0.11i
 \end{array}
 \right)
\qquad\qquad\qquad
 p_{30}=10^{-5} \left(
  \begin{array}{l}
 -0.14 - 0.18i \\
 -0.26 + 0.76i \\
  \phantom{-}0.11 - 0.09i \\
 -0.47 - 0.11i
 \end{array}
 \right)
 \]
}
\caption{ Approximate power series coefficients $p_{mn}$ for the 
parameterization of the local stable manifold of $L_4$ for the equal
 masses case $\mu=1/2$.
\label{tab:parm_coeff1} }
\end{table}
}
For example, in the 
PCRTBP with $\mu = 1/2$,  Table \ref{tab:parm_coeff1} shows approximate coefficients 
for the stable manifold at $L_4$, computed to order $N = 3$.  The data has been 
truncated at only two or three significant figures to make it fit in the table.  
Note that the complex conjugate structure of the coefficients is seen in the table. 
The table is included to give the reader a sense of the form of the data in these calculations,
and could be used to very approximately reproduce some of the results in the present 
work.

For the calculations in the main body of the text, we take $N= 12$ and 
 compute the $p_{mn}$ by recursively solving the homological equations 
using interval arithmetic.   Moreover, using the a-posteriori analysis
developed in \cite{MR3792792}, we obtain a bound of the form 
\[
\sum_{m+n = 13}^\infty \| p_{mn} \| \leq 1.4 \times 10^{-13},
\]
on the norm of the tail of the parameterization.  
The analysis is very similar to the a-posteriori analysis of the 
Newton Krawczyk Theorem \ref{thm:NK} promoted in 
the present work, adapted to the context of Banach spaces
 of infinite sequences. 

Note that this ``little ell one'' norm
bounds the $C^0$ norm of the truncation error on the unit disk,
and that Cauchy bounds can be used to estimate derivatives of the parameterization 
on any smaller disk.  Thus we actually take 
\[
P_j^\kappa(\theta) = w_j^{\kappa}(0.9 \cos(\theta) + 0.9 \sin(\theta) i, 
0.9 \cos(\theta) - 0.9 \sin(\theta) i),
\] 
as our local parameterization, where
\[
w_j^{\kappa}(z_1,z_2) = w_j^{\kappa, N}(z_1,z_2) + w_j^{\kappa, \infty}(z_1,z_2),
\] 
is a polynomial plus a tail which has 
\[
 w_j^{\kappa, \infty}(z_1,z_2) = \sum_{n + m = N+1}^\infty p_{mn} z_1^m z_2^n, 
\]
and 
\[
\sup_{|z_1|,|z_2| < 1} \left\| w_j^{\kappa, \infty}(z_1,z_2) \right\| \leq 1.4 \times 10^{-13}.
\]
The $0.9$ gives up a portion of the disk, allowing us to bound the derivatives 
needed in the Newton-Kantorovich argument.

}

\bibliographystyle{plain}
\bibliography{papers}
\end{document}